\newcolumntype{C}[1]{>{\centering\arraybackslash}p{#1}}
\newcolumntype{P}[1]{>{\centering\arraybackslash}p{#1}}
\newcolumntype{L}{>{$}l<{$}}
\newtheorem{theorem}{Theorem}[section]
\newtheorem{thm}{Theorem}
\newtheorem{cor}[thm]{Corollary}
\newtheorem{lemma}[theorem]{Lemma}
\newtheorem{proposition}[theorem]{Proposition}
\newtheorem{conjecture}{Conjecture}
\newtheorem{hypothesis}[theorem]{Hypothesis}
\theoremstyle{definition}
\newtheorem{notation}[theorem]{Notation}
\newtheorem{definition}[theorem]{Definition}
\newcommand{\pb}{{p_k}}
\newcommand{\pk}{{p^{(1)}}}
\newcommand{\pnk}{{p^{(2)}}}
\newcommand{\qr}{{q}}
\newcommand{\A}[1]{\mathrm{A}_{#1}} 
\renewcommand{\S}[1]{\mathrm{S}_{#1}} 
\newcommand{\Sym}[1]{\mathrm{Sym}({#1})} 
\newcommand{\PSL}{\mathrm{PSL}} 
\newcommand{\Syl}{\mathrm{Syl}} 
\newcommand{\AGL}{\mathrm{AGL}} 
\newcommand{\PGamL}{\mathrm{P\Gamma L}} 
\newcommand{\supp}[1]{\mathrm{Supp}({#1})} 
\newcommand{\fix}[1]{\mathrm{Fix}({#1})} 
\newcommand{\stab}[2]{\mathrm{Stab}_{#1}({#2})} 
\newcommand{\x}[1]{x} 
\newcommand{\y}[1]{y} 
\newcommand{\ctm}[1]{\mathcal{C}_M({#1})}
\newcommand{\ct}[1]{\mathcal{C}({#1})}
\begin{document}
\title{{\Large}{\textbf{Maximal Cocliques in the Generating Graphs of the Alternating and Symmetric Groups}}}

\author{Veronica Kelsey and Colva M. Roney-Dougal \thanks{ 
\textit{Acknowledgements:} The authors would like to thank the Isaac Newton Institute for Mathematical Sciences for support and hospitality during the programme \textit{Groups, representations and applications: new perspectives}, when work on this paper was undertaken. This work was supported by: EPSRC grant number EP/R014604/1. In addition, this work was partially supported by a grant from the Simons Foundation.  \newline \indent
\textit{Key words and phrases:}  generating graph, alternating groups, symmetric groups, MSC2020: 20D06, 05C25, 20B35}}
\maketitle
\begin{abstract} The \textit{generating graph} $\Gamma(G)$ of a finite group $G$ has vertex set the non-identity elements of $G$, with two elements connected exactly when they generate $G$. A \textit{coclique} in a graph is an empty induced subgraph, so a coclique in $\Gamma(G)$ is a subset of $G$ such that no pair of elements generate $G$. A coclique is \textit{maximal} if it is contained in no larger coclique. It is easy to see that the non-identity elements of a maximal subgroup of $G$ form a coclique in $\Gamma(G)$, but this coclique need not be maximal.

In this paper we determine when the intransitive maximal subgroups of $\S{n}$ and $\A{n}$ are maximal cocliques in the generating graph.
In addition, we prove a conjecture of Cameron, Lucchini, and Roney-Dougal \cite{Colva} in the case of $G = \A{n}$ and $\S{n}$, when $n$ is prime and $n \neq \frac{q^d-1}{q-1}$ for all prime powers $q$ and $d \geq 2$. Namely, we show that two elements of $G$ have identical sets of neighbours in $\Gamma(G)$ if and only if they belong to exactly the same maximal subgroups.
\end{abstract}
\section{Introduction}
The \textit{generating graph} $\Gamma(G)$ of a group $G$ has vertex set the non-identity elements of $G$, with two elements connected exactly when they generate $G$. A subset of vertices in a graph forms a \emph{coclique} if no two vertices in the subset are adjacent. A coclique is \emph{maximal} if it is contained in no larger coclique. 

The definition of a generating graph was first introduced by Liebeck and Shalev in \cite{Shalev}. Let $m(G)$ denote the minimum index of a proper subgroup of $G$. Liebeck and Shalev showed that for all $c<1$, if $G$ is a sufficiently large simple group, then $\Gamma(G)$ contains a clique of size at least $cm(G)$. That is, $G$ contains a subset $S$ of size at least $cm(G)$ such that all two-element subsets of $S$ generate $G$. 
See \cite{ref3}, \cite{ref1} and \cite{ref2} for more results about cliques in generating graphs.

Less is known about cocliques in generating graphs. In a slight abuse of language, we shall refer to maximal subgroups as cocliques in $\Gamma(G)$, even though strictly speaking it is their non-identity elements that form a coclique. Recently in \cite{Jack}, Saunders proves that for each odd prime $p$, a maximal coclique in $\Gamma(\PSL_2(p))$ is either a maximal subgroup, the conjugacy class of all involutions, or has size at most $\frac{129}{2}(p-1)+2$.

This paper determines when an intransitive maximal subgroup $M$ of $G=\S{n}$ or $G=\A{n}$ is a maximal coclique in $\Gamma(G)$. In each case we either show that $M$ is a maximal coclique or describe the maximal coclique containing $M$.

Our first main result is the following.

\begin{thm} \label{newmain}
Let $n \geq 4$, let $G = \S{n}$ or $\A{n}$, let $n > k > \frac{n}{2}$ and let $M =(\S{k} \times \S{n-k}) \cap G$ be an intransitive maximal subgroup of $G$. 
\begin{enumerate}[label=\rm{(\roman*)}]
\item If $G = \S{n}$, then $M$ is a maximal coclique in $\Gamma(G)$ if and only if $\gcd(n,k)=1$ and $(n,k) \neq (4,3)$.
\item If $G = \A{n}$, then $M$ is a maximal coclique in $\Gamma(G)$ if and only if $(n,k) \notin \{(5,3), (6,4)\}$.
\end{enumerate}
\end{thm}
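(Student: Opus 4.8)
The plan is to prove each of the four implications separately, combining explicit constructions (for the ``not a maximal coclique'' cases) with a cycle-type analysis of the subgroup lattice of $\S{n}$ and $\A{n}$ (for the ``is a maximal coclique'' cases). Throughout write $\Delta_1,\Delta_2$ for the orbits of $M$ with $|\Delta_1|=k>n/2>|\Delta_2|=n-k$; since $k>n/2$, $\Delta_1$ is the unique $k$-subset fixed by $M$, so $g\in G\setminus M$ exactly when $g(\Delta_1)\neq\Delta_1$. Recall $M\setminus\{1\}$ is a coclique, and it is maximal iff every $g\in G\setminus M$ has $\langle g,m\rangle=G$ for some $m\in M$; equivalently $M$ fails to be a maximal coclique iff there is $g\notin M$ with $M\subseteq\bigcup\{K: K\text{ maximal in }G,\ g\in K\}$.

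\smallskip\noindent\textbf{The ``not a maximal coclique'' direction.} First let $G=\S{n}$ with $\gcd(n,k)=d>1$. Fix $x\in\Delta_1$, $y\in\Delta_2$ and put $g=(x\,y)$, so $g\notin M$. For $m=(m_1,m_2)\in M$ let $C_1,C_2$ be the cycles of $m$ through $x$ and through $y$; then $C_1\cup C_2$ is invariant under both $m$ and $g$, so if $|C_1\cup C_2|<n$ then $\langle g,m\rangle$ is intransitive. Otherwise $m_1$ is a $k$-cycle and $m_2$ an $(n-k)$-cycle, and because $d\mid k$ and $d\mid n-k$ one checks that $m$ preserves a block system with $d$ blocks of size $n/d$ in which $x$ and $y$ lie in a common block; then $g$ preserves it too, so $\langle g,m\rangle$ is imprimitive. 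Either way $\langle g,m\rangle\neq\S{n}$, so $M$ is not a maximal coclique. The sporadic cases $(n,k)=(4,3)$ for $\S{n}$ and $(n,k)\in\{(5,3),(6,4)\}$ for $\A{n}$ are finite: one exhibits an explicit extending element (for instance $(1\,2)(3\,4)$ works for $(4,3)$ and for $(5,3)$) and verifies directly that it is non-adjacent to every element of $M$.

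\smallskip\noindent\textbf{The ``is a maximal coclique'' direction.} Assume $\gcd(n,k)=1$ (hence $\gcd(k,n-k)=1$) and that $(n,k)$ is not an exception; given $g\in G\setminus M$ we must build $m\in M$ with $\langle g,m\rangle=G$. The basic choice is $m=(m_1,m_2)$ with $m_1$ a $k$-cycle on $\Delta_1$ and $m_2$ an $(n-k)$-cycle on $\Delta_2$, so $\langle m\rangle$ contains a $k$-cycle fixing $\Delta_2$ pointwise and an $(n-k)$-cycle fixing $\Delta_1$ pointwise. Then: (i) $\langle g,m\rangle$ is transitive, since the orbit of a point of $\Delta_1$ contains $\Delta_1$, meets $\Delta_2$ because $g(\Delta_1)\neq\Delta_1$ forces $g(\Delta_1)\cap\Delta_2\neq\emptyset$, and hence contains $\Delta_2$; (ii) $\langle g,m\rangle$ is primitive, using the lem
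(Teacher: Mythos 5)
Your backward direction (non-coprime and sporadic cases) is fine and is essentially the paper's own argument: for $g$ a transposition across the two orbits, any $m\in M$ joint-transitive with $g$ must be a $k$-cycle times an $(n-k)$-cycle, and then $d=\gcd(n,k)>1$ yields an invariant block system; the exceptional pairs are finite checks. The problem is the forward direction, which is where all the real work lies, and your argument for it is both truncated and, as sketched, does not work. First, the ``basic choice'' $m$ with cycle type $k\mid(n-k)$ is not always available or useful: if $G=\A{n}$ with $n$ odd then such an $m$ is an odd permutation, so it is not in $M$ at all; and if $G=\S{n}$ with $n$ even then $m$ is even, so for every even $g$ you get $\langle g,m\rangle\leq\A{n}\neq G$. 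This parity obstruction is why the paper splits into Hypotheses 3.6(A) and 3.6(B) and works with many different cycle types (e.g.\ $k\mid(n-k-1)\,1$, $(k-p)\,p\mid(n-k)$, four- and five-cycle decompositions), chosen using Bertrand-type prime existence lemmas so that the resulting element has the right parity and a power of it is a Jordan element.

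Second, even when the basic $m$ lies in $M$, primitivity of $\langle g,m\rangle$ is not automatic and is the heart of the proof; your text stops mid-sentence at exactly this point. The paper's proof of primitivity is a lengthy block-system analysis (Lemmas 4.1--4.9) that depends delicately on where $\supp{g}$ meets $\Omega_1$ and $\Omega_2$, on choosing auxiliary points $t,t^x,s,s^x$ inside specified cycles of $y$, and on prime-length cycles coprime to the other cycle lengths (Lemma 2.11) to force a block to contain a whole cycle and then to be all of $\Omega$. Finally, primitivity alone does not give $\langle g,m\rangle=G$: one still needs the group to contain a Jordan element (a cycle fixing at least three points, a product of two transpositions, or an element of small support) so that Theorem 2.8 applies, and then a parity argument to distinguish $\S{n}$ from $\A{n}$. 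None of this is present in your proposal, and the small cases $n\leq 11$ (which the paper disposes of computationally) are also not addressed. So the forward implication, which is the substance of the theorem, remains unproved.
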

Our second main theorem concerns the exceptional cases of Theorem \ref{newmain}.
\begin{thm} \label{mainsize}
\begin{enumerate}[label=\rm{(\roman*)}]
\item Let $n \geq 4$, let $G = \S{n}$, let $n > k > \frac{n}{2}$ and let $M =\S{k} \times \S{n-k}$ be an intransitive maximal subgroup of $G$, setwise stabilising $\{1, \ldots, k\}$.
\begin{enumerate}[label=\rm{(\Roman*)}]
\item[\rm{(a)}] If $\gcd(n,k)>1$, then the unique maximal coclique of $\Gamma(G)$ containing $M$ is $M \cup (1, k+1)^M \backslash \{1\}.$
\item[\rm{(b)}] If $(n,k)=(4,3)$, then the unique maximal coclique of $\Gamma(G)$ containing $M$ is $M \cup (1,4)(2,3)^M \backslash \{1\}$.
\end{enumerate}
\item Let $(n,k) \in \{(5,3), (6,4)\}$, let $G = \A{n}$ and let $M =(\S{k} \times \S{n-k}) \cap G$ be an intransitive maximal subgroup of $G$. 
\begin{enumerate}[label=\rm{(\Roman*)}]
\item[\rm{(a)}] If $(n,k) = (5,3)$, then the unique maximal coclique of $\Gamma(G)$ containing $M$ is 
$M \cup (1,4)(2,3)^M \backslash \{1\}.$
\item[\rm{(b)}] If $(n,k) = (6,4)$, then the unique maximal coclique of $\Gamma(G)$ containing $M$ is 
$M \cup (1,5)(2,6)^M \backslash \{1\}.$
\end{enumerate}
\end{enumerate}
\end{thm}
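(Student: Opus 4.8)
The plan is to prove Theorem~\ref{mainsize} by analysing, in each exceptional case, exactly which elements of $G$ are nonadjacent in $\Gamma(G)$ to every element of $M$, and then checking that the resulting enlarged set is itself a coclique. Write $\Omega = \{1, \ldots, n\}$ and let $M$ be the stabiliser in $G$ of the partition $\{1,\ldots,k\} \mathbin{\dot\cup} \{k+1,\ldots,n\}$. An element $g \in G \setminus M$ can be added to the coclique $M \setminus \{1\}$ precisely when no $m \in M$ satisfies $\langle g, m \rangle = G$; equivalently, every element of $M$ lies, together with $g$, inside some proper subgroup of $G$. So the first step is to understand, for a candidate $g$, the set of maximal subgroups containing $g$ and to show it ``covers'' $M$ in the sense that $M \subseteq \bigcup_{H} H$ where $H$ ranges over maximal subgroups with $g \in H$ and $H \neq G$. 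The natural candidates are suggested by the theorem statement: in the $\gcd(n,k)>1$ case the transposition-like element $(1,k+1)$, and in the sporadic cases the double transpositions $(1,4)(2,3)$ or $(1,5)(2,6)$, all chosen so that $\langle g \rangle$ fixes the relevant partition structure up to a small-index overgroup.

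Concretely, I would carry out the following steps. First, reduce to showing two things for the proposed enlarged set $C = M \cup g^M \setminus \{1\}$: (1) $C$ is a coclique, and (2) $C$ is maximal, i.e.\ no further element can be added. For (1), since $M \setminus \{1\}$ is already a coclique and $g^M \subseteq C$ is a single $M$-conjugacy class, it suffices to show (a) $\langle g, m\rangle \neq G$ for all $m \in M$, and (b) $\langle g^{m_1}, g^{m_2} \rangle \neq G$ for all $m_1, m_2 \in M$. Part (a) is the heart of the matter: I would exhibit, for each $m \in M$, an explicit proper subgroup containing both $g$ and $m$ --- for a transposition $g=(1,k+1)$ one uses that $\langle g, m\rangle$ stabilises the partition of $\Omega$ into $\{1,k+1\}$-orbits-refinement, or more precisely that $g$ together with $M$ generates only the stabiliser of a suitable block system when $\gcd(n,k)>1$ (here the gcd condition is exactly what forces a nontrivial common block system or imprimitivity). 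Part (b) and the sporadic cases $(5,3)$, $(6,4)$ are small enough to verify by direct computation or by a short structural argument about $\A 5$, $\A 6$. For (2), maximality, I would take an arbitrary $h \in G$ not in $C$ and produce $m \in M$ (or $m' \in g^M$) with $\langle h, m \rangle = G$; this uses Theorem~\ref{newmain}'s proof machinery, since the generic-case arguments there already show most elements are adjacent to something in $M$, and one only has to handle the finitely many elements that the earlier analysis flagged.

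The main obstacle I anticipate is step (1)(a) in the $\gcd(n,k)>1$ family: showing that $(1,k+1)$ together with \emph{every} element of $\S k \times \S{n-k}$ fails to generate $\S n$. The subtlety is that $M$ is a large subgroup, so $\langle (1,k+1), m\rangle$ can be a big primitive group; one must use the number-theoretic hypothesis $d=\gcd(n,k)>1$ to locate an invariant structure. The idea is that $\langle (1,k+1), m \rangle$ preserves the partition of $\Omega$ into $d$ blocks obtained by pairing up the $M$-orbits compatibly --- because $m$ permutes $\{1,\ldots,k\}$ and $\{k+1,\ldots,n\}$ separately while $(1,k+1)$ only swaps one point across, the group generated lies in a wreath-product-type imprimitive maximal subgroup $(\S{n/d} \wr \S d) \cap G$ when the orbit lengths $k$ and $n-k$ share the factor $d$; making this precise, and handling the boundary between ``imprimitive'' and ``transitive but not generating'' cases, is where the real work lies. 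Once this is done, the remaining verifications --- that $g^M$ is a single class of the claimed size, that distinct conjugates of $g$ don't generate $G$, and the explicit sporadic computations --- are routine. I would also double-check that the element $g$ chosen is not already in $M$ and that $g^M \setminus \{1\} = g^M$ (which holds since $g \neq 1$).
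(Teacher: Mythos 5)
Your proposal is correct and follows essentially the same route as the paper: reduce to the transposition (resp.\ small involution) case using the lemmas behind Theorem \ref{newmain} to exclude all other elements, prove $\langle (1,k+1), m\rangle \neq \S{n}$ for every $m \in M$ via intransitivity or a block system of $\gcd(n,k)$ blocks arising from the common divisor (the paper's Theorem \ref{1, k+1, coprime}), observe that two transpositions/involutions never generate, and settle the $(4,3)$, $(5,3)$, $(6,4)$ cases computationally (the paper's Lemma \ref{coded}). No substantive difference in strategy.
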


In \cite{Colva}, Cameron, Lucchini and Roney-Dougal define an equivalence relation $\equiv_m$ and a chain of equivalence relations $\equiv_m^{(r)}$ on the elements of a finite group $G$. Two elements $x,y \in G$ satisfy $x \equiv_m y$ exactly when $x$ and $y$ can be substituted for one another in all generating sets for $G$. Equivalently, $x \equiv_m y$ when $x$ and $y$ lie in exactly the same maximal subgroups of $G$. Conversely, $x \equiv_m^{(r)} y$ when $x$ and $y$ can be substituted for one another in all generating sets for $G$ of size $r$. The relations $\equiv_m^{(r)}$ become finer as $r$ increases, with limit $\equiv_m$, and $\psi(G)$ is defined to be the smallest value of $r$ for which $\equiv_m$ and $\equiv_m^{(r)}$ coincide. 

\begin{conjecture}[{\cite[Conjecture 4.7]{Colva}}\label{Conj}] Let $G$ be a finite group such that no vertex of $\Gamma(G)$ is isolated. Then $\psi(G) \leq 2$.
\end{conjecture}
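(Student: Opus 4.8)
Since $G$ is $2$-generated (every element lying in some generating pair), the relation $\equiv_m^{(2)}$ is meaningful, and the chain $\equiv_m^{(2)} \supseteq \equiv_m^{(3)} \supseteq \cdots$ converges to $\equiv_m$, so $\psi(G) \le 2$ is equivalent to the single containment $\equiv_m^{(2)} \subseteq \equiv_m$. Unwinding the definitions from \cite{Colva}, $x \equiv_m^{(2)} y$ says exactly that $x$ and $y$ have the same neighbours $N(x)=N(y)$ in $\Gamma(G)$, while $x \equiv_m y$ says that $x$ and $y$ lie in the same maximal subgroups. The reverse containment $\equiv_m \subseteq \equiv_m^{(2)}$ is immediate, since $\langle g,z\rangle \neq G$ precisely when some maximal subgroup contains both $g$ and $z$; so if $x,y$ lie in the same maximal subgroups then they generate with the same elements. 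Thus the whole content is the implication: if $x,y$ are non-isolated and $N(x)=N(y)$, then $x$ and $y$ lie in exactly the same maximal subgroups. The no-isolated-vertex hypothesis is genuinely needed: two distinct isolated vertices trivially share the empty neighbourhood yet may lie in different maximal subgroups, so it is precisely the elements with degenerate neighbourhoods that must be ruled out.

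The plan is to recast everything in terms of fixed points in primitive actions. Each conjugacy class of maximal subgroups of $G$ corresponds to a faithful primitive action of $G$ on a set $\Omega$ (the coset action on $G/M$), and $g$ lies in the stabiliser $G_\omega$ exactly when $\omega \in \mathrm{Fix}_\Omega(g)$. Hence $x \equiv_m y$ becomes the assertion that $\mathrm{Fix}_\Omega(x)=\mathrm{Fix}_\Omega(y)$ for every primitive action $\Omega$, while $\langle x,z\rangle = G$ holds if and only if $x$ and $z$ share no fixed point in any primitive action, i.e.\ $\mathrm{Fix}_\Omega(x)\cap\mathrm{Fix}_\Omega(z)=\emptyset$ for all $\Omega$. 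In this language the target reads: if $x$ and $y$ have the same generating partners, then they have identical fixed-point sets in every primitive action of $G$. I would argue by contrapositive, fixing a point $\omega \in \mathrm{Fix}_{\Omega_0}(x)\setminus\mathrm{Fix}_{\Omega_0}(y)$ and seeking a witness $z \in N(x)\,\triangle\,N(y)$.

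To make this tractable for arbitrary finite $G$, I would reduce to almost simple groups. Using the crown-based techniques of Gaschütz and of Detomi and Lucchini, one aims to show that both relations are controlled by the action of $G$ on its chief factors, so that a minimal counterexample is forced to be a primitive monolithic group and the analysis ultimately concentrates on an almost simple group together with its primitive actions. There the classification of finite simple groups, combined with the strong spread and fixed-point results of Breuer, Guralnick and Kantor and of Burness and collaborators, should supply, for a point $\omega$ fixed by $x$ but not by $y$, an element $z$ avoiding every fixed point of $x$ while sharing a fixed point with $y$ (or vice versa), giving the required asymmetry. The no-isolated-vertex hypothesis enters exactly here, guaranteeing that enough spread is available and excluding the degenerate elements on which the statement otherwise fails.

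The hard part will be the covering phenomenon that obstructs any purely local witness. If $x \in M$ but $y \notin M$ for some maximal $M$, then $N(x)=N(y)$ forces $M \subseteq \bigcup_{M'\ni y} M'$, so $M$ is entirely covered by maximal subgroups containing $y$; consequently no $z \in M$ can distinguish $x$ from $y$, and the distinguishing partner must come from an unrelated primitive action. Producing a single $z$ that simultaneously respects the constraints coming from \emph{every} class of maximal subgroups is exactly where a uniform argument breaks down, and is why the full statement remains a conjecture. It is also what forces the restriction in the present paper to $G=\S{n}$ or $\A{n}$ with $n$ prime and $n\neq\frac{q^d-1}{q-1}$: for such $n$ the imprimitive maximal subgroups disappear and the excluded degrees remove the $\PSL_d(q)$ actions on projective points, cutting the list of relevant primitive actions down to the intransitive subgroups analysed in Theorems \ref{newmain} and \ref{mainsize}, the inclusion $\A{n}\le\S{n}$, and a short, explicitly controllable family of almost simple primitive groups of degree $n$, which can then be handled directly.
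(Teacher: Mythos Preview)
The statement you were asked to prove is labelled a \emph{Conjecture} in the paper, and the paper does not prove it: it is quoted from \cite{Colva} as an open problem, and the paper establishes only the special case recorded in Corollary~\ref{cor} (namely $G=\S{p}$ or $\A{p}$ for primes $p$ avoiding the projective degrees $\frac{q^d-1}{q-1}$), as a consequence of Theorem~\ref{AGLstuff}. So there is no ``paper's own proof'' to compare against.

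Your write-up is not a proof either, and to your credit you say so: you give a correct reformulation of $\psi(G)\le 2$ as the implication $N(x)=N(y)\Rightarrow x\equiv_m y$, outline a plausible reduction strategy via primitive actions and crown-based methods, and then explicitly identify the obstruction (the covering phenomenon forcing the witness $z$ to be found globally across all classes of maximal subgroups) as the reason ``the full statement remains a conjecture''. That diagnosis is accurate and matches the spirit of the paper's restriction to primes $p\neq\frac{q^d-1}{q-1}$, where Theorem~\ref{DM} cuts the list of transitive maximal subgroups down to $\AGL_1(p)\cap G$ (and a handful of sporadic cases), making the global witness problem tractable. As a strategic discussion your text is sound; as a proof it is, by your own account, incomplete, and no more is currently known.
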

Settling a long-standing conjecture, Burness, Guralnick and Harper show in \cite{Scott} that if $G$ is a finite group and all quotients of $G$ are cyclic, then no vertex of $\Gamma(G)$ is isolated. The result for $G = \A{n}$ and $\S{n}$ goes back much further, see \cite{CaptainPicard}.

Cameron, Lucchini and Roney-Dougal observe in \cite{Colva} that to prove this conjecture, it suffices to show that each maximal subgroup is a maximal coclique in $\Gamma(G)$. This motivates the following theorem.
\begin{thm} \label{AGLstuff} Let $p \geq 5$ be a prime such that $p \neq  \frac{q^d-1}{q-1}$ for all prime powers $q$ and all $d \geq 2$. Let $G = \S{p}$ or $\A{p}$. 
\begin{enumerate}[label=\rm{(\roman*)}]
\item If $G = \S{p}$, then each maximal subgroup of $G$ is a maximal coclique in $\Gamma(G)$.
\item If $G=\A{p}$, then each maximal subgroup $M$ of $G$ is a maximal coclique in $\Gamma(G)$ except when $p=5$ and $M = (\S{3} \times \S{2}) \cap G$.
\end{enumerate}
\end{thm}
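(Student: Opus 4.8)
The plan is to reduce, using the classification of primitive permutation groups of prime degree, to three families of maximal subgroups; to dispatch the intransitive family with Theorem~\ref{newmain}; and to treat the rest by exploiting the $p$-cycles that every remaining maximal subgroup contains. By Burnside's theorem together with CFSG, a transitive group of prime degree $p$ is either $2$-transitive or contained in $\AGL_1(p)$, and the $2$-transitive groups of prime degree are $\A{p}$, $\S{p}$, the transitive subgroups of $\AGL_1(p)$, the groups $H$ with $\PSL_d(q)\le H\le\PGamL_d(q)$ on the $\tfrac{q^d-1}{q-1}$ points of projective space, and $\PSL_2(11)$ (degree $11$, two classes), $M_{11}$ (degree $11$) and $M_{23}$. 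Since $p$ is prime there are no nontrivial block systems, and the hypothesis $p\ne\tfrac{q^d-1}{q-1}$ deletes the $\PSL_d(q)$-type groups. Hence the maximal subgroups of $G\in\{\S{p},\A{p}\}$ are the intransitive ones $(\S{k}\times\S{p-k})\cap G$, the affine one $\AGL_1(p)\cap G$, and, only for $G=\A{p}$ with $p\in\{11,23\}$, the groups $M_{11}$ or $\PSL_2(11)$ (when $p=11$) and $M_{23}$ (when $p=23$). Here $\AGL_1(p)$ is maximal in $\S{p}$ for all such $p$, while $\AGL_1(p)\cap\A{p}$ fails to be maximal in $\A{p}$ exactly when $p\in\{11,23\}$, in which case it lies inside $M_{11}$, $\PSL_2(11)$ or $M_{23}$.

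For the intransitive family I would simply invoke Theorem~\ref{newmain}: as $p$ is prime, $\gcd(p,k)=1$ for every $1\le k<p$ and $(p,k)\ne(4,3)$ since $p\ge5$, while $(p,k)=(6,4)$ is impossible. Hence every intransitive maximal subgroup of $\S{p}$ is a maximal coclique in $\Gamma(\S{p})$, and likewise for $\A{p}$ with the single exception $(\S{3}\times\S{2})\cap\A{5}$ when $p=5$; this accounts for the exceptional clauses in the statement.

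For the affine and almost simple maximal subgroups it suffices to show that each $g\in G\setminus M$ has a neighbour in $M$, i.e.\ that $\langle g,h\rangle=G$ for some $h\in M$. Each of $\AGL_1(p)\cap G$, $M_{11}$, $\PSL_2(11)$ and $M_{23}$ contains a $p$-cycle $\sigma$, and then $\langle g,\sigma\rangle$ is transitive of prime degree, hence primitive; if $\langle g,\sigma\rangle\ne G$ it lies in a transitive maximal subgroup $K$ of $G$ with $\sigma\in K$. A Sylow count shows that $\sigma$ lies in $N_{\S{p}}(\langle\sigma\rangle)=\AGL_1(p)$ and in exactly one conjugate each of $M_{11}$, $M_{23}$ and of either class of $\PSL_2(11)$. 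Thus when $G=\A{p}$ and $M=\AGL_1(p)\cap\A{p}$ (so $p\notin\{11,23\}$), or when $M=M_{23}\le\A{23}$, the only transitive maximal subgroup of $G$ containing $\sigma$ is $N_G(\langle\sigma\rangle)=M$, respectively $M$ itself, so $\langle g,\sigma\rangle\ne G$ would force $g\in M$; hence $\langle g,\sigma\rangle=G$. The remaining cases $G=\A{11}$, $M\in\{M_{11},\PSL_2(11)\}$ differ only in that $\langle g,\sigma\rangle$ might also lie in one of the two $\PSL_2(11)$- or $M_{11}$-conjugates through $\sigma$ that is distinct from $M$; since $M$ contains $144$ Sylow $p$-subgroups one varies $\sigma$ over them, and a short finite computation (or a count of the conjugates of $\PSL_2(11)$ and $M_{11}$ through $g$) produces a working choice.

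The hard part is the case $M=\AGL_1(p)\le\S{p}$ with $g$ an \emph{even} permutation, since then $\langle g,\sigma\rangle\le\A{p}$ and a $p$-cycle of $M$ cannot witness generation of $\S{p}$. Here I would take $h$ to be a $(p-1)$-cycle of $\AGL_1(p)$ whose fixed point $v$ is chosen in $\supp{g}$; then $\langle g,h\rangle$ is transitive, hence primitive, and contains an odd permutation, so if $\langle g,h\rangle\ne\S{p}$ it lies in an intransitive subgroup or in a conjugate of $\AGL_1(p)$. The intransitive alternative forces $\langle g,h\rangle$ into $\stab{\S{p}}{v}$, contradicting $g(v)\ne v$; this disposes of every $g$ with $|\fix{g}|\ne1$ (in particular every $p$-cycle, where one varies $v$ over $\Omega$ with at most one bad choice) and of every $g$ whose cycle type on its support is not of uniform length. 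The residual case is $g$ even with a single fixed point $f$ and cycle type $1\cdot e^{(p-1)/e}$. There I expect the argument to run as follows: if $g$ had no neighbour in $M$ then for each $v\in\supp{g}$ the group $K_v:=\langle g,\stab{M}{v}\rangle$ would be a conjugate of $\AGL_1(p)$ with $\stab{K_v}{v}=\stab{M}{v}$ and $K_v\ne M$; since two conjugates of $\AGL_1(p)$ agreeing on two point stabilisers coincide (their intersection then contains two distinct complements of the normal $C_p$, which generate the whole affine group), the $p-1$ groups $K_v$ are pairwise distinct, all contain $g$, and none agrees with $M$ at the point stabiliser of $f$; combining these constraints with $\langle g,\sigma\rangle=\A{p}$ should force $g\in M$, a contradiction. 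Pinning down this last step — ruling out that any element lies in such a configuration of conjugates of $\AGL_1(p)$ outside $M$ — is the crux of the argument, and I would expect it to need a careful count of the affine conjugates through $g$ and of their point stabilisers.
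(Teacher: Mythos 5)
Your overall skeleton is the same as the paper's (classify the transitive groups of prime degree, quote Theorem~\ref{newmain} for the intransitive subgroups, and use $p$-cycles plus Sylow-type uniqueness for the transitive ones), but two steps are genuinely defective. The clear-cut one is $M=M_{23}\le\A{23}$: you assert that the chosen $23$-cycle $\sigma\in M$ lies in no transitive maximal subgroup of $\A{23}$ other than $M$, which is false. The degree-$23$ copies of $M_{23}$ fall into two $\A{23}$-conjugacy classes, and $\sigma$ lies in exactly one member of each class (its normaliser $23{:}11$ is not maximal, being contained in $M_{23}$), so $\langle g,\sigma\rangle$ may well be contained in the member $B$ of the other class; nothing in your argument rules this out, and unlike the $p=11$ case you do not even defer it to a computation. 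The paper needs a separate argument here (Lemma~\ref{M23}): for $g$ of order at least $4$ a permutation-character/Sylow count (at most $4608$ members of the other class contain $g$, against $40320$ Sylow $23$-subgroups of $M$, each lying in a unique member of the other class) produces a good $\sigma$, and for $g$ of order $2$ or $3$ generation is checked computationally.

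The second gap is the one you flag yourself: $M=\AGL_1(p)\le\S{p}$ with $g$ even, having exactly one fixed point $f$ and all nontrivial cycles of equal length. Your proposed configuration of groups $K_v=\langle g,\stab{M}{v}\rangle$, and the hoped-for conclusion that it forces $g\in M$, is left entirely unproved, and it is not how the paper proceeds; no counting of affine conjugates through $g$ is needed. Instead, the paper uses sharp $2$-transitivity of $M$ to find $y_3\in M$ outside $M_f\cup P$ (with $M_f$ the stabiliser of $f$ and $P$ the Sylow $p$-subgroup of $M$) that agrees with $g$ on two points of $\supp{g}$; writing $y_3=y^t$ with $y$ a $(p-1)$-cycle of $M$, the group $H=\langle g,y\rangle$ is transitive, contains the odd permutation $y$, and contains $y_3^{-1}g\ne 1$ with at least two fixed points, hence $H$ lies in no conjugate of $\AGL_1(p)$ and must equal $\S{p}$. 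That ``agree with $g$ on two points'' device is the missing idea; the rest of your reduction of the affine case (odd $g$, $p$-cycles, elements with at least two fixed points or non-uniform cycle lengths) is sound and close to the paper's. A further, smaller, omission: your list of maximal subgroups of $\S{p}$ leaves out $\A{p}$ itself, which the paper treats by pairing the odd element $x$ with a $p$-cycle of $\A{p}$ not normalised by $x$.
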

Theorem 2.26 of \cite{Colva} states that $\psi(\A{5})=2$. Hence the following is immediate.
\begin{cor} \label{cor} Let $G$ and $p$ be as in Theorem \ref{AGLstuff}. Then $\psi(G) = 2$. That is, two elements of $G$ belong to exactly the same maximal subgroups of $G$ if and only if they can be substituted for each other in all generating pairs for $G$.
\end{cor}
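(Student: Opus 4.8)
The plan is to obtain Corollary~\ref{cor} from Theorem~\ref{AGLstuff} by unwinding the definitions of $\equiv_m$ and $\equiv_m^{(r)}$, using the observation of Cameron, Lucchini and Roney-Dougal that $\psi(G)\le 2$ follows once every maximal subgroup of $G$ is known to be a maximal coclique of $\Gamma(G)$. Concretely, I would prove $\psi(G)=2$ by establishing $\psi(G)\ge 2$ and then $\equiv_m^{(2)}=\equiv_m$. For the lower bound: since $p\ge 5$ the group $G$ is non-cyclic, so $G$ has no generating set of size $1$ and $\equiv_m^{(1)}$ holds vacuously between every pair of elements; on the other hand $G$ is $2$-generated, so $\Gamma(G)$ has an edge $\{x,y\}$, and if $M$ is any maximal subgroup of $G$ with $x\in M$ then $y\notin M$ (otherwise $G=\langle x,y\rangle\le M$), so $x\not\equiv_m y$. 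Hence $\equiv_m^{(1)}\ne\equiv_m$ and $\psi(G)\ge 2$. One inclusion of the remaining equality is automatic: if $x\equiv_m y$ and $\langle x,z\rangle=G$, then $\langle y,z\rangle=G$, since otherwise $y$ and $z$ would lie in a common maximal subgroup $M$, forcing $x\in M$ and $G=\langle x,z\rangle\le M$.

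The substance is the reverse inclusion $\equiv_m^{(2)}\subseteq\equiv_m$, and this is exactly where Theorem~\ref{AGLstuff} enters. Suppose $x\equiv_m^{(2)}y$; unwinding the definition with $r=2$ gives $\langle x,z\rangle=G\iff\langle y,z\rangle=G$ for all $z\in G$, so $x$ and $y$ have the same neighbours in $\Gamma(G)$. If $x\not\equiv_m y$, then, after possibly interchanging $x$ and $y$, there is a maximal subgroup $M$ of $G$ with $x\in M$ and $y\notin M$. By Theorem~\ref{AGLstuff}, $M\setminus\{1\}$ is a maximal coclique of $\Gamma(G)$. Since $y\notin M\setminus\{1\}$, maximality forces $(M\setminus\{1\})\cup\{y\}$ to contain an edge of $\Gamma(G)$; as $M\setminus\{1\}$ is itself a coclique, this edge is incident to $y$, so there is $z\in M\setminus\{1\}$ with $\langle y,z\rangle=G$. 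Then $z$ is a neighbour of $y$, hence of $x$, so $\langle x,z\rangle=G$ — impossible, as $x,z\in M<G$. Therefore $x\equiv_m y$, so $\equiv_m^{(2)}=\equiv_m$ and $\psi(G)=2$; the concluding sentence of the corollary is just this equality written out.

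The only place this scheme can stumble is the lone exception in Theorem~\ref{AGLstuff}(ii), namely $G=\A{5}$ with $M=(\S{3}\times\S{2})\cap\A{5}$, where $M$ is \emph{not} a maximal coclique; there I would instead invoke \cite[Theorem~2.26]{Colva}, which records $\psi(\A{5})=2$ directly. (In fact $p=5$ already fails the arithmetic hypothesis, since $5=(4^2-1)/(4-1)$, so under the hypotheses of Theorem~\ref{AGLstuff} no exception arises and this case does not even occur.) I do not anticipate a genuine obstacle: all the difficulty has been absorbed into Theorem~\ref{AGLstuff}, and what remains is routine manipulation of the definitions of $\equiv_m$, $\equiv_m^{(2)}$, and ``maximal coclique''.
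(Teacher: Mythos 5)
Your proposal is correct and follows essentially the same route as the paper: the paper deduces the corollary immediately from Theorem \ref{AGLstuff} together with the cited observation of Cameron, Lucchini and Roney-Dougal that $\psi(G)\le 2$ once every maximal subgroup is a maximal coclique, invoking \cite[Theorem 2.26]{Colva} for $\psi(\A{5})=2$ exactly as you do for the exceptional case. The only difference is that you spell out the coclique-to-$\psi(G)\le 2$ implication and the lower bound $\psi(G)\ge 2$ in detail, which the paper leaves to the citation; both steps are carried out correctly.
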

This paper is structured as follows. In Section 2 we begin with some background results on number theory, cycle structures of elements of $\S{n}$ and block systems of imprimitive permutation groups.
In Section 3 we show that Theorems \ref{newmain} and \ref{mainsize} hold for $n \leq 11$ and prove some preliminary lemmas. In Section 4 we complete the proof of Theorems \ref{newmain} and \ref{mainsize}. Finally, in Section 5 we prove Theorem \ref{AGLstuff}.
\section{Background Results}
\subsection{Number Theoretical Background}
In this subsection we collect results about the existence of primes in certain subsets of the integers. We start with Bertrand's Postulate. Throughout this subsection, all $\log$s are natural logarithms.
\begin{theorem}[Bertrand's Postulate. See for example {\cite[§1]{BP}}\label{BPC}] Let $m \in \mathbb{N}$. If $m \geq 4$, then there exists at least one prime $p$ such that $m < p < 2m-2$. Hence for $k \in \mathbb{N}$ with $k \geq 7$, there exists a prime $\pb \geq 5$ with $\frac{k}{2} < \pb < k - 1$.
\end{theorem}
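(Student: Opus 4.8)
The plan is to regard the first assertion --- that for every $m \geq 4$ there is a prime $p$ with $m < p < 2m-2$ --- as the standard refinement of Bertrand's Postulate and simply cite \cite{BP} for it; one could instead derive it from the usual form ($n < p \leq 2n$ for $n \geq 1$) by checking a few small values of $m$ by hand and invoking a Chebyshev-type estimate for large $m$, but that is not needed here. All the real work lies in deducing the second (``Hence\dots'') assertion, namely that for every integer $k \geq 7$ there is a prime $\pb \geq 5$ with $\tfrac{k}{2} < \pb < k-1$.

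First I would dispose of $k = 7$ by hand: $\pb = 5$ works, since $5 \geq 5$ and $\tfrac{7}{2} < 5 < 6$. This value must be separated out because $\lfloor 7/2 \rfloor = 3$, which is below the hypothesis $m \geq 4$ of the refined bound; it is the only $k \geq 7$ for which this happens.

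For $k \geq 8$ I would set $m := \lfloor k/2 \rfloor$, so that $m \geq 4$, and apply the first assertion to obtain a prime $\pb$ with $m < \pb < 2m-2$. Then $\pb > m \geq 4$ forces $\pb \geq 5$. For the lower bound, $\pb$ is an integer exceeding $m$, so $\pb \geq m+1$, and $m+1 > \tfrac{k}{2}$ whether $k$ is even (where $m = k/2$) or odd (where $m = (k-1)/2$). For the upper bound, $2m-2$ equals $k-2$ when $k$ is even and $k-3$ when $k$ is odd, so in both cases $\pb < 2m-2 \leq k-2 < k-1$. Hence $\tfrac{k}{2} < \pb < k-1$ and $\pb \geq 5$, as required.

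There is no substantial obstacle here: the argument is elementary once the refined Bertrand bound is taken as given. The only points needing care are the parity bookkeeping in the two inequalities $m+1 > k/2$ and $2m-2 \leq k-2$, and remembering that $k = 7$ has to be treated on its own.
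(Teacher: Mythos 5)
Your proposal is correct and matches the paper's treatment: the paper simply cites Chebyshev \cite{BP} for the refined bound $m < p < 2m-2$ ($m \geq 4$) and leaves the ``Hence'' deduction implicit, which is exactly the routine bookkeeping you carry out (with $m = \lfloor k/2 \rfloor$ and $k=7$ checked by hand). As a minor remark, taking $m = \lceil k/2 \rceil$ instead would cover $k = 7$ uniformly and avoid the separate case, but this changes nothing of substance.
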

\begin{notation} \label{notation} For $k \in \mathbb{N}$ with $k \geq 7$, let $\pb$ denote a prime as in Theorem \ref{BPC}.
\end{notation}
We note that $\pb$ does not divide $k$, and that $\pb$ is not uniquely determined by $k$, but at least one such prime must exist.

The proof of the following lemma is straightforward. 
\begin{lemma} \label{coprime} Let $n > k > \frac{n}{2}$ with $k \geq 7$ and let $\pb$ be as in Notation \ref{notation}. If $\pb \mid (n-k)$ then $\pb = n-k$, and if $\pb \mid (n-k-1)$ then $\pb = n-k-1$.
\end{lemma}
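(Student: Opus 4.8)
The plan is to combine two elementary size bounds. Since $k > \tfrac{n}{2}$ we have $n < 2k$, whence $n - k \le k - 1$ and $n - k - 1 \le k - 2$; and since $n > k$ we have $n - k \ge 1$. By Notation \ref{notation}, $\pb$ is a prime with $\pb > \tfrac{k}{2}$, so $2\pb > k$. The key consequence is that $\pb$ is the \emph{only} positive multiple of $\pb$ that is smaller than $k$: if $m\pb < k < 2\pb$ with $m \ge 1$, then $m = 1$.

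Granting this, both assertions follow at once. If $\pb \mid (n - k)$, then $n - k$ is a positive multiple of $\pb$ with $1 \le n - k \le k - 1 < 2\pb$, so $n - k = \pb$. If $\pb \mid (n - k - 1)$, then $n - k - 1$ is a multiple of $\pb$ with $0 \le n - k - 1 \le k - 2 < 2\pb$, so $n - k - 1 \in \{0, \pb\}$; discarding the value $0$ gives $n - k - 1 = \pb$.

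The argument presents no real obstacle; the only point worth a word is the degenerate case $n = k + 1$, in which $n - k - 1 = 0$ is vacuously divisible by $\pb$ --- the second assertion is to be understood for $n \ge k + 2$, which is the only range in which it is applied. Beyond this, the lemma is simply the observation that a number exceeding $\tfrac{k}{2}$ has no proper multiple below $k$, and the hypothesis $k \ge 7$ enters only through Bertrand's Postulate (Theorem \ref{BPC}), which is what guarantees that a prime $\pb$ with these properties exists.
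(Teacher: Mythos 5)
Your proof is correct and is exactly the straightforward argument the paper intends (the paper omits the proof, remarking only that it is straightforward): since $\pb > \tfrac{k}{2}$ and both $n-k$ and $n-k-1$ are at most $k-1 < 2\pb$, any positive one of them divisible by $\pb$ must equal $\pb$. Your aside about the degenerate case $n=k+1$, where $n-k-1=0$, is a fair reading and consistent with how the lemma is applied (only when $n-k \geq 2$).
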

We will need two variations of Bertrand's Postulate.
\begin{lemma} \label{pk} Let $n > k > \frac{n}{2}$, with $k \geq 10$. Then there exists an odd prime $\pk \leq k-5$ such that $\pk \nmid (n-k)$.
\end{lemma}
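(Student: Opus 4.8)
The plan is to find an odd prime $\pk$ with $\pk \le k-5$ that fails to divide $n-k$. The key observation is that since $n > k > \frac{n}{2}$ we have $1 \le n-k < \frac{n}{2} < k$, so $n-k$ is a positive integer strictly smaller than $k$. In particular $n-k$ has at most a bounded number of odd prime divisors, and each such divisor is at most $n-k \le k-1$; I need to produce an odd prime $\le k-5$ avoiding this finite list. I would proceed by invoking Bertrand-type estimates: by Theorem \ref{BPC} (applied with a suitable $m$) there is always a prime in a dyadic range below $k$, and more strongly, for $k$ large there are \emph{at least two} primes in an interval of the form $(\tfrac{k}{2}-c,\,k-5)$. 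Since $n-k < \tfrac{n}{2} \le k-1$, the integer $n-k$ is divisible by at most one prime exceeding $\tfrac{k}{2}-c$ (as the product of two such primes would exceed $n-k$), so at least one of these two primes does not divide $n-k$, and it is automatically odd and $\le k-5$.

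The key steps, in order, are: (1) record the elementary bounds $1 \le n-k \le k-1$ and note $n-k \ne k$; (2) handle the small values $10 \le k \le K_0$ for an explicit cutoff $K_0$ by direct inspection, listing for each residual $k$ an odd prime $\le k-5$ and checking it divides no admissible $n-k$ — or better, observing that for each fixed $k$ one only needs to defeat the single value $n-k$, and the small primes $3,5,7,\dots$ below $k-5$ cannot all divide a number that is itself less than $k$; (3) for $k > K_0$, use a strengthened Bertrand estimate to guarantee two primes $p_1 < p_2$ in the range $\bigl(\lceil \tfrac{n-k}{2}\rceil,\, k-5\bigr]$ — this interval is nonempty and "wide enough" precisely because $n-k \le k-1$ forces $\tfrac{n-k}{2} \le \tfrac{k-1}{2}$, giving an interval roughly from $\tfrac{k}{2}$ to $k-5$; (4) argue that $n-k$, being less than $k$, cannot be divisible by two distinct primes each exceeding $\tfrac{n-k}{2}$, hence one of $p_1, p_2$ works; both are odd since they exceed $\tfrac{n-k}{2} \ge 1$, so in particular $p_i \ge 3$, and we may assume $p_i > 2$ by taking the larger one.

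I expect the main obstacle to be pinning down the explicit cutoff $K_0$ and verifying that the chosen two-primes-in-an-interval statement actually holds down to that cutoff; the "clean" Bertrand's Postulate in Theorem \ref{BPC} only yields one prime in a dyadic interval, so either I need a second application on a shifted interval, or I need to cite a sharper result on prime gaps (e.g.\ that $[x, \tfrac{5}{4}x]$ contains a prime for all $x$ above a small bound, or Nagura/Ramanujan-type prime-counting bounds). The interplay between needing the interval to end at $k-5$ (not $k-1$) and still be long enough to contain two primes is the delicate point: the "$-5$" costs a constant that is negligible for large $k$ but must be absorbed into the finite check. Once the two-prime statement is secured, steps (1), (2), and (4) are routine. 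An alternative that sidesteps the two-primes subtlety: since $n-k$ is a fixed integer less than $k$, its largest prime factor is at most $n-k \le k-1$, and if \emph{every} odd prime $\le k-5$ divided $n-k$ then $n-k$ would be at least the product $3 \cdot 5 \cdot 7 \cdots$ of all such primes, which exceeds $k$ for all $k \ge 10$ (a primorial growth argument) — contradiction. This argument is cleaner and I would present it as the primary line, reserving the Bertrand approach only if the primorial bound is awkward for the very smallest $k$.
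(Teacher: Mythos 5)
Your preferred ``alternative'' line --- if every odd prime at most $k-5$ divided $n-k$, then $n-k$ would be at least the product of those primes, which is impossibly large --- is exactly the paper's proof, so your core idea is right; the only place your sketch is thinner than the paper is the phrase ``a primorial growth argument''. Since $k$ is unbounded this is not a finite check, and the paper makes it rigorous with precisely the tool you mention earlier: for $10 \le k \le 15$ one has $2(n-k) \le 2(k-1) \le 28 < 30 = 2\cdot 3\cdot 5$, and for $k>15$ one applies Theorem \ref{BPC} to $m=k-5>10$ to obtain a prime $p_m$ with $m/2 < p_m < m-1$ (so $p_m>5$, distinct from $3$ and $5$), whence the product of the primes up to $k-5$ is at least $2\cdot 3\cdot 5\cdot p_m > 15m > 2(m+4) = 2(k-1) \ge 2(n-k)$. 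So the Bertrand input belongs inside the primorial argument, not in your first, two-primes-in-an-interval route. That first route is genuinely different and could be made to work (your key observation that at most one prime exceeding $(n-k)/2$ can divide $n-k$, namely $n-k$ itself, is correct), but it requires a prime-gap statement strictly stronger than Theorem \ref{BPC} to place \emph{two} primes in an interval of the shape $\bigl(\tfrac{k}{2}-c,\,k-5\bigr]$, together with an explicit cutoff and a finite verification below it; the paper's version (your alternative) needs only the single Bertrand prime already supplied, which is why it is indeed the cleaner choice you identified.
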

\begin{proof}
Let $Q = \{q\text{ prime} : 2 \leq q \leq k-5\}$. The product of the set of prime divisors of $n-k$ is at most $n-k$, so if 
\begin{equation} \label{showthat}
2(n-k) < \prod_{q \in Q} \qr,
\end{equation}
then there exists an odd prime $\pb \in Q$, as required.

Since $k \geq 10$, the set $Q$ contains $\{2,3,5\}$ and so $\prod_{q \in Q} \qr \geq 30$. If $ k \leq 15$, then $n-k \leq k-1 \leq 14$. Hence \eqref{showthat} holds for $10 \leq k \leq 15$.

Assume from now on that $k >15$, and set $m=k-5>10$. Applying Theorem \ref{BPC} with $m$ in place of $k$ provides a prime $p_m$ with $5 < \frac{m}{2} < p_m <m-1$. Hence $2,3,5$ and $p_m$ are in $Q$. Observe also that $15m > 2(m+4)$ and $m+4=k-1 \geq n-k$. Hence 

$$2(n-k) \leq 2(m+4)<15m < 3 \cdot 5 \cdot (2p_m)  \leq \prod_{q \in Q} \qr,$$
as required.
\end{proof}
\begin{lemma} \label{pnk} Let $n > k > \frac{n}{2}$. If $n-k>10$, then at least one of the following holds.
\begin{enumerate}[label=\rm{(\roman*)}]
\item There exists a prime $\pnk$ with $2< \pnk < n-k-3$, such that $\pnk \nmid k$.
\item The inequality $n-k+1 < 2(\sqrt{n}-1)$ holds.
\end{enumerate}
\end{lemma}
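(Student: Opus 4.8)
The plan is to prove the contrapositive-flavoured statement that if (ii) fails then (i) holds. Assume $n-k+1 \geq 2(\sqrt{n}-1)$. Rearranging gives $2\sqrt{n} \leq n-k+3$, hence $n \leq (n-k+3)^2/4$, and since $k \leq n-1$ this yields $2k < (n-k+3)^2/2$. So the one piece of information I extract from the failure of (ii) is this quadratic upper bound on $2k$ in terms of $n-k$.

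Next I would reduce (i) to a statement about the primorial. Since $n-k>10$ we have $m := n-k-4 \geq 7$, so there is at least one odd prime $\leq m$; and any odd prime $\qr \leq m$ automatically satisfies $2 < \qr < n-k-3$, so it is enough to find an odd prime $\pnk \leq m$ with $\pnk \nmid k$. If no such prime existed, every odd prime at most $m$ would divide $k$, so the product of those primes would divide $k$ and hence be at most $k$; multiplying by $2$, the product $P$ of \emph{all} primes at most $m$ would satisfy $P \leq 2k$. Therefore it suffices to show $P > 2k$, and by the previous paragraph it suffices to show $P \geq (m+7)^2/2$ (using $n-k+3 = m+7$), since the trivial fact that the product of the distinct prime divisors of $k$ is at most $k$ is all we need on the arithmetic side.

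The remaining task is the purely numerical inequality $\prod_{q \leq m \text{ prime}} q \geq (m+7)^2/2$ for all $m \geq 7$, which I would split into a small-$m$ range handled by explicit primes and a large-$m$ range handled by iterating Bertrand's Postulate (Theorem~\ref{BPC}). For $7 \leq m \leq 13$ the primes $2,3,5,7$ already give $210 > (m+7)^2/2 \leq 200$; for $13 \leq m \leq 63$ the primes $2,3,5,7,11,13$ give $30030$, comfortably exceeding $(m+7)^2/2 \leq 2450$. For $m \geq 64$ I would note that $2 \in (1,2]$, $3 \in (2,4)$, and for each $i$ with $2 \leq i \leq j$ (where $2^{j+1} \leq m$, i.e. $j = \lfloor \log_2 m\rfloor - 1$) Bertrand's Postulate supplies a prime in $(2^i, 2^{i+1})$; multiplying these $j+1$ distinct primes gives $\prod_{q \leq m} q > 2^{j(j+1)/2}$. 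Since $m < 2^{j+2}$ forces $m+7 < 2^{j+3}$ (as $2^{j+2} \geq 128 > 7$), we get $(m+7)^2/2 < 2^{2j+5}$, and $2^{j(j+1)/2} \geq 2^{2j+5}$ is equivalent to $(j-5)(j+2) \geq 0$, i.e. $j \geq 5$, i.e. $m \geq 64$. Thus the two ranges overlap and cover every $m \geq 7$.

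The main obstacle is not conceptual but bookkeeping: one must choose the breakpoints of the case split so the elementary small-$m$ estimates and the Bertrand-based large-$m$ estimate genuinely overlap and leave no gap, and one must track the additive constant $7$ (from $n-k+3 = m+7$) carefully through the chain of inequalities so that the final threshold on $m$ really does fall inside the range where the explicit-prime bound still works.
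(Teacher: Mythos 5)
Your proposal is correct, and the logical skeleton matches the paper's: both arguments come down to showing that the product of the primes up to $n-k-4$ exceeds the quadratic quantity $(n-k+3)^2/2$, so that the primes in question cannot all divide $k$ (you argue ``not (ii) $\Rightarrow$ (i)'', the paper argues ``not (i) $\Rightarrow$ (ii)'', which is the same thing). Where you genuinely differ is in how that primorial lower bound is established. The paper splits at $n-k=26$: for $10<n-k<26$ it checks the inequality value by value, and for $n-k\geq 26$ it invokes the Rosser--Schoenfeld estimate $\pi(x)>x/\log x$ together with a calculus/monotonicity argument for an auxiliary function $y(m)$, plus the crude bound $\log q>2$ for primes $q>7$. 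You instead stay entirely elementary: two explicit products ($2\cdot3\cdot5\cdot7=210$ and $2\cdot3\cdot5\cdot7\cdot11\cdot13=30030$) handle $7\leq m\leq 63$, and for $m\geq 64$ you iterate Bertrand's Postulate (the paper's Theorem~\ref{BPC}) over dyadic intervals $(2^i,2^{i+1})$ to get $\prod_{q\leq m}q>2^{j(j+1)/2}$ with $j=\lfloor\log_2 m\rfloor-1$, which beats $2^{2j+5}>(m+7)^2/2$ exactly when $j\geq5$, i.e.\ $m\geq 64$, so your ranges mesh with no gap. I checked the bookkeeping (the reduction $2k<(n-k+3)^2/2$ from the failure of (ii), the factor of $2$ when passing from odd primes dividing $k$ to all primes up to $m$, the distinctness of the Bertrand primes, and the threshold $(j-5)(j+2)\geq0$) and it all holds. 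What your route buys is self-containedness --- it removes the dependence on the external prime-counting bound of Rosser and Schoenfeld and reuses a result already quoted in the paper --- at the cost of cruder constants, which is harmless here since only a quadratic bound is needed; the paper's analytic approach would scale better if a stronger (superpolynomial) lower bound were ever required.
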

\begin{proof} First suppose that $10 < n-k < 26$ and let $P = \{q \text{ prime} : 2<q<n-k-3\}$. If (i) does not hold, then all primes in $q \in P$ divide $k$, and hence $ \prod_{q \in P} q  \leq k < n$. For $10<n-k<26$ a straightforward calculation shows that
$$\frac{(n-k+3)^2}{4}< \prod_{q \in P} q ,$$
and so $(n-k+3)^2/4< n.$ 
Rearranging gives the desired inequality in (ii).\\

Now suppose that $n-k \geq 26$. Let $m=n-k-3$, so that $m \geq 23$, and let $\pi(m)$ be the number of primes less than or equal to $m$. We shall first prove that
\begin{equation} \label{whatever2}
2\Big(\pi(m-1)-4\Big) >  \log \Bigg(2\Big(\frac{m}{2}+3\Big)^2\Bigg). 
\end{equation}
To do so let $y:=y(m)$ be the following function of $m$
$$y = (m-1)  -\log \Big(\frac{m}{2} +3 \Big) \log(m-1) - \frac{1}{2}\Big( \log(2) +8 \Big)\log(m-1).$$
Then
$$\frac{dy}{dm} = 1 - \frac{\log \Big(\frac{m}{2} +3 \Big)}{m-1}-\frac{\log(m-1) }{m +6 }-\frac{\log(2)+8}{2(m-1)}.$$
The functions $\frac{\log(\frac{m}{2}+3)}{m-1}$ and $\frac{\log(2)+8}{2(m-1)}$ are monotonically decreasing for $ m \geq 2$, the function $\frac{\log(m-1)}{m+6}$ is monotonically decreasing for $m \geq 9$, and $\frac{dy}{dm}$ is positive at $m=9$. Hence $\frac{dy}{dm}$ is positive for $m \geq 9$. Since $y(23)>0$, it follows that $y$ is positive for $m \geq 23$. Hence for $m \geq 23$
$$(m-1) -4\log(m-1) > \log(\frac{m}{2}+3) \log(m-1) + \frac{1}{2}\log(2) \log(m-1),$$
and so
\begin{equation} \label{whatever}
2\Big( \frac{m-1}{\log(m-1)} -4 \Big)>  2 \log \Big( \frac{m}{2} +3 \Big) + \log(2) =  \log \Bigg(2\Big(\frac{m}{2}+3\Big)^2\Bigg).
 \end{equation}
Corollary 1 of \cite{rosser} states that $\pi(x) > \frac{x}{\log(x)}$ for $x \geq 17$. 
Hence \eqref{whatever} implies \eqref{whatever2}.\\

\medskip
Let $Q = \{q \text{ prime}: 2 \leq q < m\}$ and $Q_0 = \{q \in Q : q> 7\}.$ Observe that if $q \in Q_0$, then $\log(q) > 2$. Therefore
$$\log \Bigg( \prod_{q \in Q} q  \Bigg)= \sum\limits_{q \in Q} \log(q) >  \sum\limits_{q \in Q_0} 2 =
2\Big(\pi(m-1)-4\Big)>  \log \Bigg(2\Big(\frac{m}{2}+3\Big)^2\Bigg).$$
Thus 
$$\prod_{q \in Q} q > 2\Big(\frac{m}{2}+3\Big)^2.$$
If (i) does not hold, then $q \mid k$ for all odd primes $q \in Q$. Then $k$ is greater than or equal to the product of all such primes, so 
$$2n> 2k \geq \prod_{q \in Q} q >  2\Big(\frac{m}{2}+3\Big)^2.$$
Hence $\sqrt{n} > \frac{m}{2}+3$ and so
$$2(\sqrt{n}-1) > m+4 = (n-k-3)+4 = n-k+1,$$
as in (ii). Hence the lemma holds. 
\end{proof}
\subsection{Elementary Results on Cycle Structures and Primitivity}
This subsection collects several technical results concerning cycle structures, primitive groups and block systems.
\begin{definition} \label{Jor} For $n \geq 12$ we refer to the following elements of $\S{n}$ as \textit{Jordan elements}:
\begin{enumerate}[label=\rm{(\roman*)}]
\item products of two transpositions; \label{Jordan2}
\item cycles fixing at least three points; \label{Jones} 
\item permutations with support size less than or equal to $2(\sqrt{n}-1)$. \label{LS}
\end{enumerate}
\end{definition}
The following result will be used extensively in the rest of the paper.
\begin{theorem} \label{jordanstyle}
Let $G \leq \S{n}$ be primitive. If $G$ contains a Jordan element, then $\A{n} \leq G$.
\end{theorem}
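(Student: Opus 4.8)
The plan is to invoke the classical theory of Jordan groups, together with a handful of low-degree exceptions handled by the third clause of Definition \ref{Jor}. Recall that a \emph{Jordan group} is a primitive group $G \leq \S{n}$ possessing a subset $\Delta$ (a \emph{Jordan set}) with $|\Delta| < n$ and $|\Delta| \geq 2$ such that the pointwise stabiliser $G_{(\Omega \setminus \Delta)}$ acts transitively on $\Delta$; the complementary set $\Omega \setminus \Delta$ witnesses a nontrivial element supported inside $\Delta$ when $\Delta$ is small. The three families in Definition \ref{Jor} are exactly the configurations to which the strongest classification results apply: an element that is a product of two transpositions, or a cycle fixing at least three points, forces a Jordan set of the appropriate size, and clause \ref{LS} is the Liebeck--Saxl bound on minimal degree.

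First I would treat clauses \ref{Jordan2} and \ref{Jones}. If $g \in G$ is a $p$-cycle for a prime $p$ fixing at least three points, then Jordan's classical theorem (see Wielandt or Dixon--Mortimer, \emph{Permutation Groups}, Thm.~3.3E) immediately gives $\A{n} \leq G$. A general cycle fixing at least three points has a power that is a $p$-cycle fixing at least three points (take $p$ a prime dividing the cycle length; the number of fixed points only goes up), so this case reduces to the prime-cycle case. If $g$ is a product of two transpositions, then $g$ has support size $4$; for $n \geq 12$ one appeals to the theorem of Jordan/Manning classifying primitive groups containing an element of small support and small ``degree of transitivity deficiency''—concretely, a primitive group of degree $n$ containing a permutation whose support has size $4$ and which is not the identity on a large set must contain $\A{n}$ once $n$ is large enough; the classical bound $n \geq 12$ is comfortably inside the range where the only primitive groups with an element of support $\leq 4$ are the ones containing $\A{n}$.

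Next I would handle clause \ref{LS}. Here the key input is the Liebeck--Saxl theorem: a primitive group $G \leq \S{n}$ with minimal degree (the least support size of a nonidentity element) at most $2(\sqrt{n}-1)$ satisfies $\A{n} \leq G$, with a short explicit list of exceptions all of which have degree bounded by a small constant—in particular all have $n < 12$. Since an element with support size $\leq 2(\sqrt{n}-1)$ certifies minimal degree $\leq 2(\sqrt{n}-1)$, and since $n \geq 12$ rules out every exception on that list, we conclude $\A{n} \leq G$.

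The main obstacle is bookkeeping at the boundary: the classical Jordan-type theorems and the Liebeck--Saxl theorem each come with a finite list of small-degree exceptional groups (e.g.\ Mathieu groups, $\PSL$ actions, affine groups), and one must check that none survives under the hypothesis $n \geq 12$ together with the \emph{specific} element types in Definition \ref{Jor}. For the cycle and double-transposition cases the exceptions are essentially only in degrees well below $12$, so they vanish; for the minimal-degree case one quotes the Liebeck--Saxl exception list directly and observes every entry has degree less than $12$. I do not expect any genuinely hard step—the result is a packaging of known theorems—but care is needed to cite the version of each theorem whose exceptional list is small enough to be killed by $n \geq 12$.
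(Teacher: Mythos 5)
Your treatment of clauses (i) and (iii) matches the paper's proof in substance: the paper simply cites Wielandt (p.~43) for a product of two transpositions and Corollary~3 of Liebeck--Saxl for elements of support at most $2(\sqrt{n}-1)$, and since Jordan elements are only defined for $n \geq 12$, the small-degree exceptions you worry about do not arise. The genuine gap is in clause (ii). Your reduction ``a cycle fixing at least three points has a power that is a $p$-cycle fixing at least three points'' is false: if $g$ is an $m$-cycle and $p \mid m$, then $g^{m/p}$ is a product of $m/p$ disjoint $p$-cycles, not a single $p$-cycle; the only nontrivial powers of $g$ that are single cycles are the $m$-cycles $g^k$ with $\gcd(k,m)=1$. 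So Jordan's classical prime-cycle theorem cannot be reached this way, and the obstruction is not cosmetic. For example, with $n=15$ and $g$ a $12$-cycle fixing three points, no power of $g$ is a single cycle fixing at least three points, a double transposition, or an element of support at most $2(\sqrt{15}-1)\approx 5.7$, so none of your other ingredients applies either.

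This is precisely why the paper invokes Corollary~1.3 of Jones (\emph{Primitive permutation groups containing a cycle}, 2014) for clause (ii): the statement that a primitive group of degree $n$ containing \emph{any} cycle fixing at least three points must contain $\A{n}$ is not classical Jordan theory for composite cycle lengths; it rests on the classification of primitive groups containing a cycle (and hence on CFSG). To repair your proof you should replace the powering argument by a direct appeal to Jones's result (or an equivalent classification), keeping your citations for the double-transposition and minimal-degree cases as they are.
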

\begin{proof}  
Types (i), (ii) and (iii) from Definition \ref{Jor} are dealt with by page 43 of \cite{Wielandt}, Corollary 1.3 of \cite{Jones} and Corollary 3 of \cite{LS} respectively.
\end{proof}
\begin{lemma} \label{oarity table} Let $y \in \S{n}$ be composed of $t$ (possibly trivial) disjoint cycles. Then $y$ is even if and only if $t$ and $n$ have the same parity.
\end{lemma}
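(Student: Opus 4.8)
The plan is to reduce the claim to the standard fact that an $\ell$-cycle is a product of $\ell-1$ transpositions, and then simply count. First I would write $y$ as the product of its $t$ pairwise disjoint cycles $c_1, \dots, c_t$, of lengths $\ell_1, \dots, \ell_t$ respectively. The key bookkeeping point, and the one place where the hypothesis that trivial cycles are allowed matters, is that every one of the $n$ points then lies in exactly one of these cycles, so $\ell_1 + \dots + \ell_t = n$ (rather than $\sum \ell_i$ equalling the size of the support of $y$).

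Next I would recall, or quickly verify, that for every $\ell \geq 1$ the cycle $(a_1\, a_2\, \cdots\, a_\ell)$ equals $(a_1\, a_\ell)(a_1\, a_{\ell-1}) \cdots (a_1\, a_2)$, a product of $\ell - 1$ transpositions; for $\ell = 1$ this is the empty product, consistent with a trivial cycle being the identity. Substituting these expressions into $y = c_1 \cdots c_t$ exhibits $y$ as a product of $\sum_{i=1}^t (\ell_i - 1) = n - t$ transpositions.

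Finally, invoking the well-definedness of the sign homomorphism $\mathrm{sgn} \colon \S{n} \to \{\pm 1\}$ — equivalently, the fact that the parity of the number of transpositions in any factorisation of a permutation depends only on the permutation — we obtain $\mathrm{sgn}(y) = (-1)^{n-t}$. Hence $y$ is even precisely when $n - t$ is even, i.e.\ when $n$ and $t$ have the same parity, which is the assertion of the lemma.

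I do not expect any genuine obstacle: the statement is elementary and follows directly from standard properties of the symmetric group. The only subtlety worth flagging in the write-up is the one noted above, namely that $t$ counts the trivial cycles as well, so that the cycle lengths sum to $n$; with that in hand the argument is a two-line computation.
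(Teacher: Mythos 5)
Your proof is correct and rests on the same elementary observation as the paper's, namely that the sign of an $\ell$-cycle is $(-1)^{\ell-1}$ and that the lengths of all $t$ cycles (trivial ones included) sum to $n$, yielding $\mathrm{sgn}(y)=(-1)^{n-t}$. The paper packages the same computation slightly differently, counting the cycles of odd and even length and reducing modulo $2$ rather than writing the explicit sign formula, but the two arguments are essentially identical and your handling of the trivial cycles is exactly the point that matters.
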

\begin{proof} Let $y$ have $t_1$ cycles of odd length and $t_2$ cycles of even length, so that $t_1+t_2=t$. Then $n \equiv t_1\bmod 2$ so
$$t-n \equiv t-t_1 = t_2 \bmod 2.$$
Hence $t$ and $n$ have the same parity if and only if $t_2$ is even, that is if and only if $y$ is even.
\end{proof}
\begin{notation} Let $y \in \S{n}$ be composed of $t$ (possibly trivial) disjoint cycles $c_1 c_2 \ldots c_t$. For $1 \leq i \leq t$ let $\Theta_i = \supp{c_i}$. We denote the cycle type of $y$ by $\ct{y} = |c_1| \cdot |c_2| \cdot \ldots \cdot |c_t|$. Often the ``$\cdot$'' notation is omitted when it is clear without, and we sometimes gather together common cycle orders and use the usual exponent notation.
\end{notation}
For example, if $y = (1, 2, 3)(4, 5)(6, 7)$ then $c_1 = (1,2,3)$, $c_2 = (4,5)$ and $c_3 = (6,7)$. Thus $\Theta_1 = \{1,2,3\}$, $\Theta_2 = \{4,5\}$ and $\Theta_3 = \{6,7\}$, and we may choose to write $\mathcal{C}(y) = 3 \cdot 2 \cdot 2$ or $\mathcal{C}(y) = 3 \cdot 2^2$. \\

The next lemma guarantees under certain circumstances the existence of suitable sets of distinct points.
\begin{lemma} \label{8pts}
Let $\frac{n}{2}<k<n$, and let $x \in \S{n}$ be such that $1^x = k+1$. 
\begin{enumerate}[label=\rm{(\roman*)}]
\item If $|\supp{x}| \geq 8$ and $x$ does not have cycle type $1^{(n-8)} \cdot 2 \cdot 3^2,$ $1^{(n-8)} \cdot 3 \cdot 5$ or $1^{(n-9)} \cdot 3^3,$ then there exist distinct points $r, r^x, s, s^x ,t, t^x \in \supp{x} \backslash \{1, k+1\}$. \label{8pts1}
\item If $|\supp{x}| \geq 8$ and $x$ does not have cycle type $1^{(n-8)} \cdot 2^4,$ then there exist distinct points $s, s^x, t, t^x,u,v \in \supp{x} \backslash \{1, k+1\}$ such that $(u,v)$ is not a cycle of $x$.\label{8pts2}
\end{enumerate}
\end{lemma}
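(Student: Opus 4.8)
The plan is to prove both parts by a direct combinatorial analysis of the cycle structure of $x$, exhausting the ways that the points $1$ and $k+1$ can be distributed among the cycles of $x$. Write $\mathcal{C}(x) = |c_1| \cdots |c_t|$ for the cycle type, and recall $1^x = k+1$, so $1$ and $k+1$ lie in a common cycle $c$ of $x$ of length $\ell \geq 2$. Since $|\supp{x}| \geq 8$, after removing the (at most) two ``forbidden'' points $1, k+1$ from consideration we still have at least six points of $\supp{x}$ available, and the goal in (i) is to select three of the ``moved-pair'' points $r, r^x, s, s^x, t, t^x$ so that all six are distinct and avoid $\{1, k+1\}$; in (ii) we instead need two such pairs together with two further points $u, v$ not forming a $2$-cycle of $x$.

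For part (i), I would split into cases according to $\ell = |c|$. If $\ell \geq 4$, then $c$ alone contains at least two points $r, r^x$ with $r, r^x, r^{x^2} \notin \{1, k+1\}$ (taking $r = (k+1)^x$ works, giving $r^x = (k+1)^{x^2}$, both outside $\{1,k+1\}$ once $\ell \geq 4$), and then I look for the remaining four points in $\supp{x} \setminus (\{1,k+1\} \cup \{r, r^x\})$, which has size at least $\ell - 4 + (|\supp{x}| - \ell) = |\supp{x}| - 4 \geq 4$; a short argument using the cycles other than those already used produces $s, s^x, t, t^x$, unless the leftover support is too fragmented — but fragmentation into $1$-cycles is impossible ($x$ has no fixed points in its support), so the only obstruction is having many $2$-cycles, which I handle directly. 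If $\ell = 3$, then $c = (1, k+1, a)$ contributes one usable pair, namely $(k+1, a)$ with $a \notin \{1, k+1\}$, and I need two more pairs from the remaining support of size $\geq 5$. If $\ell = 2$, then $c = (1, k+1)$ contributes nothing, and I need all three pairs from the remaining support of size $\geq 6$. In each of these last two cases the problematic configurations are precisely those in which the non-$c$ part of $x$ consists of very short cycles, and tracking exactly which cycle types fail to yield three disjoint ``arrows'' $p \to p^x$ avoiding $\{1,k+1\}$ leads to the listed exceptions $1^{(n-8)} \cdot 2 \cdot 3^2$, $1^{(n-8)} \cdot 3 \cdot 5$, $1^{(n-9)} \cdot 3^3$. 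The key counting observation is that a cycle of length $m$ contributes $\lfloor m/2 \rfloor$ pairwise-disjoint arrows, reduced by at most one when it must avoid a forbidden point, so one simply needs the total arrow count over $\supp{x} \setminus \{1, k+1\}$ to be at least $3$, and the exceptions are the finitely many small cases where it is exactly $2$.

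For part (ii) the analysis is similar but now I only need two disjoint arrows $s \to s^x$, $t \to t^x$ avoiding $\{1, k+1\}$, which by the same counting is available whenever $|\supp{x}| \geq 8$ except possibly when the support (minus $\{1,k+1\}$) is a union of $2$-cycles; once the two arrows are fixed, I must exhibit two further points $u, v$ in $\supp{x} \setminus (\{1, k+1\} \cup \{s, s^x, t, t^x\})$ not forming a $2$-cycle of $x$. Since $|\supp{x}| \geq 8$, at least two such points remain, and they fail the requirement only if every remaining cycle is a $2$-cycle and those two points happen to be one of them — in which case I re-choose $u, v$ to straddle two different cycles, which is possible unless literally all of $\supp{x}$ breaks up as four $2$-cycles, i.e.\ $\mathcal{C}(x) = 1^{(n-8)} \cdot 2^4$, the stated exception. (If $x$ has a cycle of length $\geq 3$ I can always pick $u$ inside it and $v$ outside, or two consecutive points of it.)

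The main obstacle is purely bookkeeping: making sure the case division on the cycle containing $\{1, k+1\}$ together with the ``avoid at most one forbidden point per cycle'' estimate is carried out without gaps, so that the list of exceptional cycle types is exactly right and not merely ``almost'' right. There is no hard idea here — Theorem~\ref{jordanstyle} and the earlier number-theoretic lemmas play no role — but the enumeration must be complete, and the cleanest way to guarantee that is to phrase everything in terms of the ``arrow count'' $\sum_i (\lfloor |c_i|/2 \rfloor - \epsilon_i)$ where $\epsilon_i \in \{0,1\}$ records whether $c_i$ meets $\{1, k+1\}$, show this is $\geq 3$ (resp.\ $\geq 2$) outside the listed cycle types, and then do the elementary ``pick distinct representatives'' argument once.
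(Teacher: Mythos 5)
Your proposal is correct and takes essentially the same route as the paper: the paper also proceeds by elementary case analysis on the length of the cycle of $x$ containing $1$ and $k+1$ (the orbit $1^{\langle x\rangle}$), choosing the required points explicitly from that cycle and from the remaining support, with the excluded cycle types arising exactly as the configurations where too few disjoint pairs are left. Your arrow-count bound $\sum_i\bigl(\lfloor |c_i|/2\rfloor-\epsilon_i\bigr)\geq 3$ (resp.\ $\geq 2$ together with the choice of $u,v$, where only $1^{(n-8)}\cdot 2^4$ obstructs) is just a more uniform packaging of the same counting and identifies precisely the listed exceptional types.
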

\noindent
\textit{Proof.}
Let $S = \supp{x}$ and $T = S \backslash 1^{\langle x \rangle}$. We split into cases based on $|1^{\langle x \rangle}|$. 
\begin{enumerate}[label=\rm{(\roman*)}]
\item If $|1^{\langle x \rangle}| \geq 8,$ then we may let $r = 1^{x^2}, s=1^{x^4}$ and $t=1^{x^6}$. If $6 \leq |1^{\langle x \rangle}| \leq 7,$ then $|T| \geq 2$. Let $r =1^{x^2}$, $s = 1^{x^4}$ and let $t \in T$. If $4 \leq |1^{\langle x \rangle}| \leq 5,$ then $|T| \geq 4$ because $x$ does not have cycle type $1^{(n-8)} \cdot 3 \cdot 5$. Hence either $\langle x \rangle$ has at least two orbits on $T$ of size at least 2 or one of size at least 4. Hence we may let $r=1^{x^2}$ and $s, t \in T$. If $ |1^{\langle x \rangle}| \leq 3,$ then $|T| \geq 6$ because $x$ does not have cycle type $1^{(n-8)} \cdot 3 \cdot 5$ or $1^{(n-8)} \cdot 2 \cdot 3^2$. Hence either $\langle x \rangle$ has one orbit on $T$ of size at least 6, or exactly two orbits, with sizes at least 3 and 4 respectively (because $x$ does not have cycle type $1^{(n-9)} \cdot 3^3$), or at least 3 orbits. Hence we may let $r, s, t \in T$. 
\item If $|1^{\langle x \rangle}| \geq 8,$ then let $u=1^{x^2}$, $v=1^{x^3}$, $s=1^{x^4}$ and $t = 1^{ x^6 }$.
If $6 \leq |1^{\langle x \rangle}| \leq 7,$ then let $u=1^{x^2}$, $v=1^{x^3}$, $s=1^{x^4}$ and let $t \in T$. The arguments for $|1^{\langle x \rangle} |\in \{3,4,5\}$ are straightforward. If $|1^{\langle x \rangle}|=2,$ then $|T| \geq 6$ and $\langle x \rangle$ does not have 3 orbits of size 2 on $T,$ since the cycle type of $x$ is not $1^{(n-8)} \cdot 2^4$. Hence we may let $u,v,s,t \in T$. \hfill \qed
\end{enumerate}

For the rest of this section, let $\Omega$ be a finite set and let $H$ be a transitive subgroup of $\Sym{\Omega}$ with a block system $\mathcal{B}$. We include the possibility of $\mathcal{B}$ being trivial, that is blocks of size 1 or $|\Omega|$.
\begin{notation} For $h_i$ a cycle of $h \in H$, let $h_i^{\mathcal{B}}$ be the permutation that $h$ induces on the set of blocks in $\mathcal{B}$ which contain elements of $\supp{h_i}$.
\end{notation}
\begin{lemma} \label{div} Let $h \in H$ with cycle $h_i$. Then $h_i^{\mathcal{B}}$ is a cycle whose length divides the length of $h_i$.
\end{lemma}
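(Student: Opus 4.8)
The plan is to track how $h$ permutes the blocks that meet the support of a single cycle $h_i$. Write $\ell = |h_i|$ and $\supp{h_i} = \{a, a^h, a^{h^2}, \dots, a^{h^{\ell-1}}\}$, so that $h_i$ cyclically permutes these $\ell$ points. For every integer $j$ let $B_j \in \mathcal{B}$ be the unique block containing $a^{h^j}$; note $B_{j} = B_{j'}$ whenever $j \equiv j' \pmod{\ell}$, since then $a^{h^j} = a^{h^{j'}}$. Because $H$ permutes $\mathcal{B}$ and $(a^{h^j})^h = a^{h^{j+1}}$, the block $B_j^h$ contains $a^{h^{j+1}}$; as distinct blocks are disjoint, this forces $B_j^h = B_{j+1}$. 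Hence the set of blocks meeting $\supp{h_i}$ is exactly $\{B_0, \dots, B_{\ell-1}\}$, and the permutation $h_i^{\mathcal{B}}$ induced by $h$ on it is $B_j \mapsto B_{j+1}$.

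Next I would show this permutation is a single cycle whose length divides $\ell$. Let $m$ be the least positive integer with $B_m = B_0$ (so $1 \le m \le \ell$, using $B_\ell = B_0$). The relation $B_j^h = B_{j+1}$ gives $B_{j+1} = B_{j'+1}$ whenever $B_j = B_{j'}$, so iterating $B_m = B_0$ yields $B_{m+i} = B_i$ for all $i$; thus $j \mapsto B_j$ is periodic with period $m$. A division-algorithm argument then shows $B_j$ depends only on $j \bmod m$ and that $B_0, \dots, B_{m-1}$ are pairwise distinct (if $B_j = B_0$ with $0 < j < m$, minimality of $m$ is contradicted after reducing $j$ modulo $m$). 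Therefore $h$ cycles these distinct blocks as $B_0 \to B_1 \to \dots \to B_{m-1} \to B_0$, i.e.\ $h_i^{\mathcal{B}}$ is an $m$-cycle, and since $B_\ell = B_0$ the minimality of $m$ forces $m \mid \ell$.

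There is no serious obstacle here; the only subtlety is resisting the temptation to assume the blocks $B_0, \dots, B_{\ell-1}$ are already distinct — they need not be, and the content of the lemma is precisely that the number of distinct ones divides $\ell$ — so the one point requiring care is keeping the two moduli, $\ell$ and $m$, straight. It is also worth remarking in one line that the degenerate block systems are covered: blocks of size $1$ give $m = \ell$ (and $h_i^{\mathcal{B}}$ is essentially $h_i$ itself), while the single block $\Omega$ gives $m = 1$.
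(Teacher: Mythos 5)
Your proof is correct and takes essentially the same route as the paper: both use the fact that $h$ permutes $\supp{h_i}$ in a single orbit to conclude that the induced action on the blocks meeting that support is one cycle. The only difference is cosmetic, in how the divisibility is extracted: the paper notes that every block meeting $\supp{h_i}$ contains the same number $m$ of its points, giving $|h_i| = m\,|h_i^{\mathcal{B}}|$, whereas you argue via the minimal period of the sequence of blocks; both yield the lemma.
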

\begin{proof}
Since $h_i$ is transitive on the points of $\supp{h_i}$, it follows that $h_i^{\mathcal{B}}$ is a cycle. Let $\Delta$ be a block containing $m>0$ points of $\supp{h_i}$. It follows that each block of $\mathcal{B}$ contains exactly $m$ or 0 points of $\supp{h_i}$. Hence $|h_i| = m|h_i^{\mathcal{B}}|$.
\end{proof} 

\begin{lemma} \label{comb} Let $h \in H$ with disjoint (possibly trivial) cycles $h_1$ and $h_2$. 
\begin{enumerate}[label=\rm{(\roman*)}]
\item \label{sameinduced} Suppose that $\Delta$ is a block of $\mathcal{B}$ containing $\alpha \in \supp{h_1}$ and $\beta \in \supp{h_2}$. Then $h_1^{\mathcal{B}} = h_2^{\mathcal{B}}$.
\item \label{pcy} If $h_1$ has prime length $p$, then the points of $\supp{h_1}$ either lie in one block or each lie in different blocks.
\item \label{h1 cup h2} Suppose $h_1$ and $h_2$ have coprime lengths. If there exists a block $\Delta$ of $\mathcal{B}$ with $\supp{h_1} \cap \Delta \neq \emptyset$ and $\supp{h_2} \cap \Delta \neq \emptyset$, then $\supp{h_1} \cup \supp{h_2} \subseteq \Delta$.
\end{enumerate}
\end{lemma}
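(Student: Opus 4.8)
The plan is to derive all three parts from one clean observation that is already implicit in the proof of Lemma~\ref{div}: since $H$ acts on the block system $\mathcal{B}$, each $h \in H$ induces a permutation $h^{\mathcal{B}}$ of $\mathcal{B}$; and if $h_i$ is a cycle of $h$, then $\supp{h_i}$ is $h$-invariant, so the set $\mathcal{B}_i$ of blocks meeting $\supp{h_i}$ is $h^{\mathcal{B}}$-invariant, and $h_i^{\mathcal{B}}$ is exactly the restriction of $h^{\mathcal{B}}$ to $\mathcal{B}_i$. By Lemma~\ref{div} this restriction is a single cycle, and (from the same proof) every block in $\mathcal{B}_i$ contains the same number $m_i \geq 1$ of points of $\supp{h_i}$, with $|h_i| = m_i\,|h_i^{\mathcal{B}}|$. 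Once this is set up, the lemma is essentially a sequence of short deductions, so I would state it as a preliminary paragraph and then handle the three parts in turn.

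For part (i): the hypothesis gives $\alpha \in \supp{h_1} \cap \Delta$ and $\beta \in \supp{h_2} \cap \Delta$, so $\Delta \in \mathcal{B}_1 \cap \mathcal{B}_2$. Hence $h_1^{\mathcal{B}}$ and $h_2^{\mathcal{B}}$ are both cycles of the single permutation $h^{\mathcal{B}}$ (each being its restriction to an invariant subset) and both have $\Delta$ in their support. Since distinct cycles of a permutation have disjoint supports, $h_1^{\mathcal{B}} = h_2^{\mathcal{B}}$. For part (ii): by Lemma~\ref{div}, $|h_1^{\mathcal{B}}|$ divides $|h_1| = p$, so it is $1$ or $p$. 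If it is $1$, then $\mathcal{B}_1$ is a single block, so all of $\supp{h_1}$ lies in one block; if it is $p$, then $|h_1| = m_1\,|h_1^{\mathcal{B}}|$ forces $m_1 = 1$, so each block of $\mathcal{B}_1$ meets $\supp{h_1}$ in exactly one point, and the $p$ points of $\supp{h_1}$ lie in $p$ distinct blocks.

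For part (iii): apply part (i) to the block $\Delta$ to get $h_1^{\mathcal{B}} = h_2^{\mathcal{B}} =: \sigma$; by Lemma~\ref{div}, $|\sigma|$ divides both $|h_1|$ and $|h_2|$, which are coprime, so $|\sigma| = 1$. Thus $\mathcal{B}_1$ consists of a single block, which must be $\Delta$ since $\Delta$ meets $\supp{h_1}$; hence $\supp{h_1} \subseteq \Delta$, and symmetrically $\supp{h_2} \subseteq \Delta$, giving $\supp{h_1} \cup \supp{h_2} \subseteq \Delta$. I expect no serious obstacle here; the only point requiring care is the degenerate case of a trivial cycle (length one, so $\supp{h_i}$ a single point), which I would dispose of first in each part since then $h_i^{\mathcal{B}}$ is trivial and every claim is immediate, and the minor bookkeeping of identifying $h_i^{\mathcal{B}}$ with the restriction of the global action $h^{\mathcal{B}}$ precisely enough to legitimately invoke ``two cycles sharing a point coincide''.
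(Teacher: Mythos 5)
Your proposal is correct and rests on the same foundation as the paper: all three parts are deduced from Lemma \ref{div}, with (ii) and (iii) handled exactly as in the paper's proof. Your part (i) differs only in packaging — you identify $h_1^{\mathcal{B}}$ and $h_2^{\mathcal{B}}$ with cycles (orbits) of the global permutation $h^{\mathcal{B}}$ and use that intersecting orbits coincide, whereas the paper tracks the points $\alpha^{h^i}=\alpha^{h_1^i}$ and $\beta^{h^i}=\beta^{h_2^i}$ directly; this is the same argument in substance, and your noted care with trivial cycles and with identifying $h_i^{\mathcal{B}}$ as the restriction of $h^{\mathcal{B}}$ is exactly the right bookkeeping.
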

\noindent
\textit{Proof.}
\begin{enumerate}[label=\rm{(\roman*)}]
\item Since $\alpha, \beta \in \Delta$, it follows that for all $i$, the points $\alpha^{h^i}$ and $\beta^{h^i}$ lie in the same block. From $\alpha^{h^i} = \alpha^{h_1^{i}}$ and $\beta^{h^i} = \beta^{h_2^i}$, it follows that $h_1^{\mathcal{B}} = h_2^{\mathcal{B}}$.
\item By Lemma \ref{div}, $h_1^{\mathcal{B}}$ is either a $p$-cycle or a $1$-cycle.
\item By Part \ref{sameinduced}, $h_1^{\mathcal{B}} = h_2^{\mathcal{B}}$. Since $h_1$ and $h_2$ have coprime lengths, it follows from Lemma \ref{div} that $h_1^{\mathcal{B}}$ is trivial. \hfill \qed
\end{enumerate}
\begin{definition} Let $H$ be transitive, with block system $\mathcal{B}$, and let $\Delta \in \mathcal{B}$. If $|\Delta| \geq 2$ then we say that $\mathcal{B}$ is a \emph{non-singelton} block system.
\end{definition}
\begin{lemma} \label{coprime cycle} Let $\mathcal{B}$ be a non-singleton block system for $H$. Suppose that there exists $h \in H$ with a cycle $h_i$ of prime length, which is coprime to the lengths of all other cycles of $h$. Then there exists a block $\Delta$ of $\mathcal{B}$ such that $\supp{h_i} \subseteq \Delta$. In particular, $\Delta^h = \Delta$.
\end{lemma}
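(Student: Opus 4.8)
The plan is to fix a point $\alpha \in \supp{h_i}$, let $\Delta$ be the unique block of $\mathcal{B}$ containing $\alpha$, and show that $\supp{h_i} \subseteq \Delta$. Once this is established, the final assertion follows quickly: since $h_i$ is a cycle of $h$, the set $\supp{h_i}$ is $h$-invariant, so $\supp{h_i} = \supp{h_i}^h \subseteq \Delta^h$; as $\supp{h_i}$ is nonempty (its length is a prime, hence at least $2$), the blocks $\Delta$ and $\Delta^h$ share a point and therefore coincide.

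To prove $\supp{h_i} \subseteq \Delta$ I would argue by contradiction. Suppose $\supp{h_i} \not\subseteq \Delta$. By Lemma \ref{comb}\ref{pcy}, applied with the prime length $p$ of $h_i$, the points of $\supp{h_i}$ then lie in pairwise distinct blocks, so $\Delta \cap \supp{h_i} = \{\alpha\}$. Because $\mathcal{B}$ is non-singleton, $|\Delta| \geq 2$, so we may choose $\gamma \in \Delta$ with $\gamma \neq \alpha$; then $\gamma \notin \supp{h_i}$. Now I split on the cycle of $h$ containing $\gamma$. If $\gamma$ lies in the support of some nontrivial cycle $h_j$ of $h$, then necessarily $j \neq i$, so $h_i$ and $h_j$ have coprime lengths by hypothesis; since $\Delta$ meets both $\supp{h_i}$ and $\supp{h_j}$, Lemma \ref{comb}\ref{h1 cup h2} forces $\supp{h_i} \subseteq \Delta$, contradicting $\Delta \cap \supp{h_i} = \{\alpha\}$. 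Otherwise $\gamma$ is fixed by $h$, so $\gamma = \gamma^h \in \Delta^h$; then $\Delta$ and $\Delta^h$ share the point $\gamma$, whence $\Delta^h = \Delta$. Applying $h$ repeatedly to $\alpha$ now gives $\supp{h_i} = \alpha^{\langle h_i \rangle} \subseteq \Delta$, again a contradiction. Hence $\supp{h_i} \subseteq \Delta$, as required.

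I do not expect any real obstacle here: the statement is a short deduction from Lemmas \ref{div} and \ref{comb}. The two points to be careful about are (a) making sure the auxiliary point $\gamma$ exists, which is exactly where the non-singleton hypothesis is used — the conclusion genuinely fails for a block system of singletons whenever $h_i$ is nontrivial — and (b) the slightly fiddly bookkeeping with ``possibly trivial'' cycles, i.e. cleanly separating the case ``$\gamma$ lies in another genuine cycle of $h$'' from the case ``$\gamma$ is fixed by $h$''.
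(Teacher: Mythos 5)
Your proposal is correct and takes essentially the same approach as the paper: choose a second point of the block meeting $\supp{h_i}$ and apply Lemma \ref{comb}\ref{pcy} or Lemma \ref{comb}\ref{h1 cup h2}. The only difference is cosmetic: where you treat a point $\gamma$ fixed by $h$ by a separate direct argument, the paper simply invokes Lemma \ref{comb}\ref{h1 cup h2} with the (possibly trivial) cycle through that point, which its statement allows.
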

\begin{proof} Let $\Delta$ be a block containing at least one point $\alpha \in \supp{h_i}$, and let $\beta \in \Delta \backslash \{\alpha\}$. If $\beta \in \supp{h_i}$, then the result follows by Lemma \ref{comb}\ref{pcy}. If $\beta \notin \supp{h_i}$, then $\supp{h_i} \subseteq \Delta$ by Lemma \ref{comb}\ref{h1 cup h2}. 
\end{proof}
\section{Preliminary Results}
We begin by showing that Theorems \ref{newmain} and \ref{mainsize}(i) hold when $n \leq 11$ and prove Theorem \ref{mainsize}(ii). We then set up the notation for the rest of the paper, prove some preliminary lemmas and divide the task of proving Theorems \ref{newmain} and \ref{mainsize}(i) into subcases, see Hypothesis \ref{hypothesis}.
\begin{notation} \label{not} Throughout this and the next section let $G$ be either $\S{n}$ or $\A{n},$ acting on the set $\Omega = \{1,2,\ldots, n\}$ with $n \geq 4$. Let $\Omega_1 = \{1, 2, \ldots, k\}$ and $\Omega_2 = \{k+1, \ldots, n\}$ with $k >  n-k$. Let $M = \stab{G}{\Omega_1} = \stab{G}{\Omega_2}$. Then $M$ is isomorphic to $\Big(\S{k} \times \S{n-k} \Big) \cap G$. We let $x \in G \backslash M$.
\end{notation}
We first prove Theorem \ref{newmain} for some small values of $n$ and $n-k$. 
\begin{lemma} \label{coded} Let $n \leq 11$. Then Theorems \ref{newmain} and \ref{mainsize} hold.
\end{lemma}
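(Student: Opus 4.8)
The plan is to reduce Lemma~\ref{coded} to a finite computation, carried out in a computer algebra system such as \textsc{GAP} or \textsc{Magma}, and to organise that computation so that the output is transparent. For each pair $(n,k)$ with $4 \le n \le 11$ and $n > k > n/2$, and for each of $G = \S{n}$ and $G = \A{n}$ (subject to $M$ being maximal, so excluding the cases where $\S{k}\times\S{n-k}$ fails to be maximal, e.g.\ when $k = n-k$ or the known small exceptions), I would do the following. First, construct $G$, the subgroup $M = \stab{G}{\Omega_1}$, and the generating graph $\Gamma(G)$ explicitly. Then test whether the set $M \setminus \{1\}$ is a maximal coclique: this amounts to checking, for every conjugacy class representative $x \in G \setminus M$, whether $x$ is adjacent in $\Gamma(G)$ to some element of $M$, i.e.\ whether there exists $m \in M$ with $\langle x, m\rangle = G$. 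Since adjacency is constant on the orbits of $M$ acting by conjugation on $G \setminus M$, and a coclique extension need only be tested one element at a time, $M \setminus \{1\}$ is a maximal coclique precisely when every such $x$ has a neighbour in $M$.

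The second part of the computation handles the exceptional cases, where $M\setminus\{1\}$ is \emph{not} maximal, and we must identify the unique maximal coclique containing it. Here I would first collect the set $X$ of all $x \in G \setminus M$ having no neighbour in $M$; by the theory of \cite{Colva} (or directly), these are exactly the elements that \emph{could} be added to the coclique $M \setminus \{1\}$. One then checks that $M \cup X \setminus \{1\}$ is itself a coclique — equivalently, that no two elements of $X$ are adjacent and no element of $X$ is adjacent to an element of $M$ — and that it is maximal, by verifying every $x \notin M \cup X$ has a neighbour inside it. Finally one matches $X$ against the conjugacy-class descriptions in the statement of Theorem~\ref{mainsize}: for $(n,k) \in \{(4,3)\}$ with $\gcd(n,k)=1$ one expects $X = (1,4)(2,3)^M$; for $\gcd(n,k) > 1$ with $n \le 11$, i.e.\ $(n,k) \in \{(4,2)?,(6,4),(8,6),(9,6),(10,6),(10,8),(6,4)\dots\}$ — more precisely the pairs with $n > k > n/2$, $\gcd(n,k)>1$ and $M$ maximal — one expects $X = (1,k+1)^M$; and for the alternating cases $(5,3)$ and $(6,4)$ one expects $(1,4)(2,3)^M$ and $(1,5)(2,6)^M$ respectively. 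The point is simply to confirm, by direct computation, that the $M$-orbit of the stated element is exactly $X$ and that $M \cup X \setminus\{1\}$ is a maximal coclique.

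The main obstacle is not mathematical depth but rather making the finite check convincing and complete: one must be careful to enumerate \emph{all} relevant pairs $(n,k)$, to treat $\S{n}$ and $\A{n}$ separately (noting that in $\A{n}$ the subgroup $(\S{k}\times\S{n-k})\cap G$ is maximal under slightly different conditions, and that $\A{4}$ and $\A{5}$ are special), and to exclude the degenerate cases where $M$ is not maximal so that the theorem statement does not apply. A secondary point of care is the identification step: two elements of the same cycle type in $\S{n}$ can lie in different $M$-orbits, so one should verify that the element named in Theorem~\ref{mainsize} (e.g.\ $(1,k+1)$ versus some other transposition) generates the orbit $X$, and not merely that $X$ consists of elements of that cycle type. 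Beyond these bookkeeping issues the proof is routine, and I would present it as: ``The claims were verified by direct computation in \textsc{GAP}; we describe the method and record the exceptional cocliques in the table below,'' followed by a short table listing, for each $(n,k,G)$ in range, whether $M\setminus\{1\}$ is a maximal coclique and, if not, the set $X$ of addable elements.
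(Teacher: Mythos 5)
Your proposal is correct and is essentially the paper's proof: a finite \textsc{Magma} computation over all relevant $(n,k)$ with $n\le 11$ and $G=\S{n}$ or $\A{n}$, reducing $x$ up to $M$-conjugacy (the paper additionally reduces the candidate $y\in M$ up to $C_M(x)$-conjugacy), which leaves exactly the exceptional elements recorded in Theorem \ref{mainsize}. The only difference is cosmetic: where you propose to verify by further computation that $M\cup X\setminus\{1\}$ is a (maximal) coclique, the paper observes that each surviving $x$ is an involution and two involutions generate only a dihedral group, so $M\cup x^M\setminus\{1\}$ is automatically a coclique and hence the unique maximal one containing $M$.
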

\begin{proof}
Using $\textsc{Magma}$, we create a list of all possibilities for $x \in G \backslash M$, up to $M$-conjugacy. For each such $x$, we create a corresponding list $L$ of elements of $M$ up to conjugation by $ C_M(x)$. We then discard all $x$ for which there exists a $y \in L$ such that $\langle x, y \rangle =G$.

The only remaining $G,$ $M$ and $x$ are 
\begin{enumerate}[label=\rm{(\roman*)}]
\item $G = \S{n}$, $x=(1, k+1)$ and $(n,k) = (6,4),$ $(8,6),$ $(9,6),$ $(10,6)$ or $(10,8)$;
\item $(G, k, x) = (\S{4}, 3,(1, 4)(2, 3)),$ $(\A{5}, 3, (1, 4)(2, 3)),$ or $ (\A{6}, 4, (1, 5)(2, 6))$.
\end{enumerate}
In each case, $x$ is an involution and two involutions generate a dihedral group. Hence in these cases the maximal coclique in $\Gamma(G)$ containing $M$ is $M \cup x^M \backslash \{1\}$.
\end{proof}
\begin{proposition}\label{1 to k+1} Let $ n \geq 12$ and let $G$ and $M$ be as in Notation \ref{not}. Then $M$ is a maximal coclique of $\Gamma(G)$ if and only if for all $ x \in G \backslash M$ such that $1^x=k+1$ there exists $y \in M$ such that $\langle x, y \rangle =G$.
\end{proposition}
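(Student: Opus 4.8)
The plan is to reduce the definition of a maximal coclique to the stated condition by an $M$-conjugation argument. First I would record the elementary reformulation: since $M$ is a proper subgroup of $G$, its non-identity elements form a coclique in $\Gamma(G)$, so $M$ fails to be a maximal coclique precisely when there is some $x \in G \setminus M$ for which $(M \setminus \{1\}) \cup \{x\}$ is still a coclique; as $M \setminus \{1\}$ is already edge-free, this happens exactly when $x$ is adjacent to no element of $M$. Hence $M$ is a maximal coclique if and only if for every $x \in G \setminus M$ there exists $y \in M$ with $\langle x, y \rangle = G$ (and such a $y$ is automatically non-trivial, as $\langle x \rangle$ is cyclic while $G$ is not). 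With this reformulation, the implication from the general condition to the special one ($1^x = k+1$) is immediate, so all the work lies in the converse.

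For the converse, the key observation is that the property
$$P(x):\qquad \text{there exists } y \in M \text{ with } \langle x, y \rangle = G$$
is invariant under conjugating $x$ by an element of $M$: if $m \in M$ and $\langle x^m, y\rangle = G$, then $\langle x, y^{m^{-1}}\rangle = G$, and $y^{m^{-1}} \in M$ since $M$ is closed under conjugation by its own elements. So it suffices to show that every $x \in G \setminus M$ is $M$-conjugate to some $x'$ with $1^{x'} = k+1$. As $x \notin M = \stab{G}{\Omega_1}$, the set $\Omega_1$ is not $x$-invariant, so there is a point $a \in \Omega_1$ with $b := a^x \in \Omega_2$. I would then choose $m \in M$ with $a^m = 1$ and $b^m = k+1$; this is possible because $\Sym{\Omega_1}$ acts transitively on $\Omega_1$ and $\Sym{\Omega_2}$ on $\Omega_2$. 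Setting $x' := x^m = m^{-1} x m$ gives $1^{x'} = ((1^{m^{-1}})^x)^m = (a^x)^m = b^m = k+1$, as needed; applying the hypothesis to $x'$ and conjugating back establishes $P(x)$.

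The one point requiring care is that when $G = \A{n}$ the subgroup $M$ consists only of the \emph{even} permutations in $\Sym{\Omega_1} \times \Sym{\Omega_2}$, so the element $m$ above must be chosen even. I would first pick any $m_0 \in \Sym{\Omega_1} \times \Sym{\Omega_2}$ with $a^{m_0} = 1$ and $b^{m_0} = k+1$; if $m_0$ is odd, I replace it by $m := m_0 t$, where $t$ is the transposition of two points of $\Omega_1 \setminus \{1\}$ — which exists since $k > n/2 \ge 6$, whence $|\Omega_1 \setminus \{1\}| = k - 1 \ge 2$. Then $t$ fixes both $1$ and $k+1$, lies in $\Sym{\Omega_1} \times \Sym{\Omega_2}$, and has odd sign, so $m$ is an even element of $\Sym{\Omega_1} \times \Sym{\Omega_2}$, hence lies in $M$, and still sends $a \mapsto 1$, $b \mapsto k+1$. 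This parity adjustment is the only case-dependent ingredient and is entirely routine for $n \ge 12$; everything else is formal bookkeeping, so I do not anticipate any real obstacle. Combining the reformulation, the $M$-conjugacy reduction, and the parity fix completes the proof.
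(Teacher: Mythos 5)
Your proposal is correct and follows essentially the same route as the paper: both reduce the problem to the single point-pair $(1,k+1)$ by conjugating $x$ by an element of $M$ that sends a point $a\in\Omega_1$ with $a^x\in\Omega_2$ to $1$ and $a^x$ to $k+1$, using that generation is preserved under simultaneous conjugation. The only cosmetic difference is how the parity issue for $G=\A{n}$ is handled — the paper invokes transitivity of $\A{k}$ and $\A{n-k}$ inside $M$, while you adjust an arbitrary choice by a transposition fixing $1$ and $k+1$ — and both are valid.
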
 
\begin{proof}
The forward implication is clear, so assume that $M$ is not a maximal coclique of $\Gamma(G)$. Then there exists $x_1 \in G\backslash M$ such that $\langle x_1, y \rangle \neq G$ for all $y \in M$. Since $\A{m}$ is transitive for $m \geq 3$, there exists $h \in M$ such that $x_1^h$ maps 1 to $k+1$. Hence for all $y \in M$ we deduce that $\langle x_1^h, y^h \rangle  \neq G^h=G$.
\end{proof}
\begin{notation} For $y \in M$ define $$\ctm{y} := \mathcal{C}_1(y) \mid \mathcal{C}_2(y),$$ where $\mathcal{C}_i(y) := \ct{y\mid_{\Omega_i}}$ for $i=1,2.$
\end{notation}
We now define two distinct hypotheses which between them cover all possibilities in the case where $x \in G \backslash M$ is not a transposition and $n \geq 12$.
\begin{hypothesis} \label{hypothesis} Recall the set up of Notation \ref{not}. Let $n \geq 12$ so that $k \geq 7$.
\begin{enumerate}[label=\rm{(\Alph*)}]
\item Let $G = \A{n}$ if $n$ is odd and $G = \S{n}$ if $n$ is even.\label{anodd}
\item Let $G = \A{n}$ if $n$ is even and $G = \S{n}$ if $n$ is odd. \label{aneven}
\end{enumerate}
In both cases, assume that $1^x =k+1$ and that $x \neq (1, k+1)$.
\end{hypothesis}
We now prove two useful elementary lemmas that will help to simplify the proof of Theorem \ref{newmain}. Recall Definition \ref{Jor}, of a Jordan element. 
\begin{lemma} \label{n-k10} Let $n,G, M$, and $x$ be as in Case (A) or (B) of Hypothesis \ref{hypothesis}. If $|\supp{x} \cap \Omega_1| =1 $, the group $\langle x \rangle$ contains no Jordan element, and $n-k \leq 10$, then Theorem \ref{newmain} holds.
\end{lemma}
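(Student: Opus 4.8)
The plan is to verify, via Proposition \ref{1 to k+1}, that for the given $x$ there exists $y \in M$ with $\langle x, y \rangle = G$. Since $\supp{x} \cap \Omega_1 = \{1\}$, the permutation $x$ fixes $\Omega_1 \setminus \{1\}$ pointwise and $\supp{x} \subseteq \{1\} \cup \Omega_2$, so $|\supp{x}| \leq n-k+1 \leq 11$. As $\langle x \rangle$ contains no Jordan element, Definition \ref{Jor}\ref{LS} forces $|\supp{x^i}| > 2(\sqrt{n}-1)$ for every $i$ with $x^i \neq 1$; in particular $11 \geq |\supp{x}| > 2(\sqrt{n}-1)$, whence $n \leq 42$, and by \ref{Jones} and \ref{Jordan2} no power of $x$ is a single nontrivial cycle or a product of two transpositions. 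Moreover $n-k \geq 3$: if $n-k=1$ then $x = (1,k+1)$, which is excluded by Hypothesis \ref{hypothesis}; and if $n-k=2$ then the only options for $x$ are $(1,k+1)$ and the $3$-cycle $(1,k+1,k+2)$, the latter being a cycle fixing at least three points and hence a Jordan element of $\langle x \rangle$. So only finitely many pairs $(n,k)$ and cycle types $\ct{x}$ survive, with $\ct{x}$ confined to a short explicit list.

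For each surviving $x$ I would take $y = y_1 y_2 \in M$, where $y_1$ is a $k$-cycle on $\Omega_1$ with $1 \in \supp{y_1}$ and $y_2 \in \Sym{\Omega_2}$ is chosen so that: (i) the orbits of $\langle y \rangle$ are $\Omega_1$ together with the orbits of $y_2$ on $\Omega_2$, and, since $1^x = k+1 \in \Omega_2$ and $\supp{x} \cap \Omega_2$ has more than one point, $x$ links these orbits and so $H := \langle x, y \rangle$ is transitive; (ii) $\mathrm{sgn}(y_1)\mathrm{sgn}(y_2) = 1$ when $G = \A{n}$, so that $y \in M$ (which of $\S{n}$ and $\A{n}$ equals $G$ being fixed by the parity of $n$ and by which case of Hypothesis \ref{hypothesis} we are in); and (iii) $\gcd(|y_2|,k) = 1$, so that $z := y^{|y_2|} = y_1^{|y_2|}$ is a $k$-cycle lying in $H$ with $\supp{z} = \Omega_1$. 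The existence of such a $y_2$ is where the smallness of $\Omega_2$, together with $n \leq 42$ bounding the prime divisors of $k$, is used; I expect the potentially troublesome configurations to be vacuous by the no-Jordan hypothesis, leaving at most a few cases for a direct check in the spirit of Lemma \ref{coded}.

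Granting $y$, I would show $H$ is primitive. Let $\mathcal{B}$ be a block system for $H$ and $\Delta$ a block. If $\Delta$ meets both $\Omega_1$ and $\Omega_2$, pick $\alpha \in \Delta \cap \Omega_1$ and $\beta \in \Delta \cap \Omega_2$; as $\beta \notin \supp{z}$ we get $\Delta^z = \Delta$, whence $\Omega_1 = \alpha^{\langle z \rangle} \subseteq \Delta$ and $|\Delta| \geq k > \frac{n}{2}$, forcing $|\Delta| = n$. Otherwise every block lies inside $\Omega_1$ or inside $\Omega_2$; then the block $\Delta_1 \ni 1$ lies inside $\Omega_1$, but $x$ fixes $\Delta_1 \setminus \{1\}$ pointwise and sends $1$ into $\Omega_2$, so $\Delta_1^x$ meets both $\Omega_1$ and $\Omega_2$ unless $\Delta_1 = \{1\}$. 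In every case $\mathcal{B}$ is trivial, so $H$ is primitive. Since $z$ is a $k$-cycle fixing the $n-k \geq 3$ points of $\Omega_2$, it is a Jordan element of type \ref{Jones}, so $\A{n} \leq H$ by Theorem \ref{jordanstyle}. If $G = \A{n}$ then $H \leq G$, hence $H = G$; if $G = \S{n}$, then $x$ or $y$ is an odd permutation (if $x$ is even, I would choose $y_2$, and hence $y$, odd), so $H \not\leq \A{n}$ and again $H = G$. This produces the required $y$, which, with Proposition \ref{1 to k+1}, is the conclusion needed of these $x$ for Theorem \ref{newmain} in the case at hand.

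The step I expect to be the main obstacle is the construction of $y_2$: it must simultaneously make $H$ transitive, keep $y$ in $M$, and have order coprime to $k$, and one must confirm that for every surviving cycle type of $x$ and every $12 \leq n \leq 42$ such a $y_2$ exists — equivalently, that the configurations where it fails to exist do not actually arise. I anticipate a short uniform argument for the bulk of these, together with a finite (possibly computer-assisted) check for the remainder.
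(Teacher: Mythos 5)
Your reductions at the start (support $\leq n-k+1\leq 11$, hence $12\leq n\leq 42$; no power of $x$ a cycle or a double transposition; $n-k\geq 3$) match the paper, which then simply specifies candidate sets $\mathcal{Y}$ (of shape $k\mid (n-k-1)\cdot 1$ with the fixed point inside $\supp{x}$ under Hypothesis A, and $k\mid(n-k)$ under Hypothesis B) and verifies generation for the finitely many surviving $(n,k,x)$ by a \textsc{Magma} check. The genuine problem is your uniform construction. You require simultaneously that $y_1$ be a $k$-cycle, that $\gcd(|y_2|,k)=1$, and that $y=y_1y_2$ have the parity needed to lie in $M$ when $G=\A{n}$. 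If $k$ is even, then $\gcd(|y_2|,k)=1$ forces $|y_2|$ odd, so every cycle of $y_2$ has odd length and $y_2$ is an even permutation; but $y_1$ is an odd permutation (a $k$-cycle with $k$ even), so $y$ is odd and cannot lie in $M\leq \A{n}$. Thus your scheme is infeasible, not merely awkward, whenever $G=\A{n}$ and $k$ is even, and contrary to your expectation these configurations are not vacuous: for example $n=13$, $k=8$, $x=(1,9,10)(11,12,13)$ (type $3^2$, support $6>2(\sqrt{13}-1)$, $\langle x\rangle$ Jordan-free, $x$ even, $G=\A{13}$ under Hypothesis A), or $n=18$, $k=10$, $x=(1,11)(12,13)(14,15)(16,17)$ (type $2^4$, support $8>2(\sqrt{18}-1)$, $G=\A{18}$ under Hypothesis B). For this entire family your argument produces no $y$ at all, and the endgame you rely on (a $k$-cycle $z=y^{|y_2|}$ giving primitivity and a Jordan element) is unavailable; a different cycle shape for $y$, with a different generation argument, is needed there, and you have not supplied one beyond deferring to a direct check.

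Two smaller points. First, your transitivity claim in (i) is stated too quickly: it is not enough that $\supp{x}\cap\Omega_2$ has more than one point; every orbit of $y_2$ on $\Omega_2$ must be joined, via moves of $x$, to the $\Omega_1$-orbit, so the orbit structure of $y_2$ must be chosen with reference to the cycles of $x$ (this is arrangeable, but it is an additional constraint on $y_2$, compounding the existence problem above). Second, since your fallback for the cases your construction misses is exactly a finite computer verification in the spirit of Lemma \ref{coded}, be aware that this is what the paper's proof of this lemma consists of anyway; but as written your proposal asserts that the troublesome cases do not arise, which is false, so the proof as it stands has a genuine gap rather than being a complete alternative route.
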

\begin{proof}
Since $|\supp{x} \cap \Omega_1|=1$, it follows that $|\supp{x}| \leq n-k+1 \leq 11$. Hence, since $x$ is not a Jordan element,
$$2(\sqrt{n}-1) < |\supp{x}|  \leq 11,$$ 
and so $12 \leq n \leq 42$. Notice that $|\supp{x} \cap \Omega_2| \geq 2$. Hence by 2-set transitivity of $\A{m}$ for $m \geq 3$, we may assume that $\{k+1,k+2\} \in \supp{x}$.  

If Hypothesis \hyperref[]{\ref*{hypothesis}.\ref*{anodd}} holds, then let $\mathcal{Y}$ be the set of $y \in M$ with $\ctm{y} =k \mid  (n-k-1)1$ and $\Theta_3=\{k+2\}$. If Hypothesis \hyperref[]{\ref*{hypothesis}.\ref*{aneven}} holds, then let $\mathcal{Y}$ be the set of elements of $ M$ with cycle type $k\mid (n-k)$. By Lemma \ref{oarity table}, $\mathcal{Y} \subseteq \A{n}$ if and only if $G = \A{n}$.

Since $\langle x \rangle$ contains no Jordan element, no power of $x$ is a cycle or the product of two transpositions. From this and the fact that $|\supp{x}| \leq 11$ there are few possible cycle structures for $x$. Using \textsc{Magma} it is easy to check that for each $x,n$ and $k$ there exists $y \in \mathcal{Y}$ such that $\langle x ,y \rangle=G$. 
\end{proof}
\begin{lemma} \label{small8} Let $n$, $G$, $M$ and $x$ be as in Case (A) or (B) of Hypothesis \ref{hypothesis}. Assume that $|\supp{x}| < 8$ or that $\mathcal{C}(x) \in T:= \{1^{(n-8)}\cdot 2 \cdot 3^2, \;\; 1^{(n-8)} \cdot 3 \cdot 5, \;\; 1^{(n-8)} \cdot 2^4, \;\; 1^{(n-9)} \cdot 3^3\}$. Then at least one of the following holds.
\begin{enumerate}[label=\rm{(\roman*)}]
\item The group $X = \langle x \rangle$ contains a Jordan element.
\item There exists an element $y \in M$ such that $\langle x, y \rangle =G$.
\end{enumerate}
\end{lemma}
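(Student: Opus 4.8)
The plan is to suppose that (i) fails, i.e.\ that $X=\langle x\rangle$ contains no Jordan element, and then establish (ii), resorting to a finite computation only for a handful of small cases. The first step is to bound $n$. Since $x$ is not a Jordan element of type (iii) of Definition \ref{Jor}, we have $|\supp{x}|>2(\sqrt{n}-1)$; as every cycle type in $T$ has support $8$ or $9$, and $|\supp{x}|<8$ otherwise, this gives $n\le 30$, with the sharper bounds $n\le 20$ when $|\supp{x}|\le 7$, $n\le 24$ when $\mathcal{C}(x)=1^{(n-8)}\cdot 2^4$, and $n\le 30$ when $\mathcal{C}(x)=1^{(n-9)}\cdot 3^3$.

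The second step is to discard those cycle types for which some power of $x$ is visibly a Jordan element, contradicting the failure of (i). If $\mathcal{C}(x)=1^{(n-8)}\cdot 2\cdot 3^2$ then $x^3$ is a $2$-cycle fixing $n-2\ge 10$ points, a Jordan element of type (ii); if $\mathcal{C}(x)=1^{(n-8)}\cdot 3\cdot 5$ then $x^5$ is a $3$-cycle fixing $n-3\ge 9$ points, again a Jordan element of type (ii). For $|\supp{x}|\le 7$ the same remark, applied to $x^2$ or $x^3$, rules out every cycle type except $1^{(n-6)}\cdot 2^3$ and $1^{(n-6)}\cdot 3^2$; and an element of either of these two types is itself a Jordan element of type (iii) as soon as $n\ge 16$. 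Hence, if (i) fails, then $(n,k)$ and $\mathcal{C}(x)$ lie in an explicit finite list: $\mathcal{C}(x)\in\{1^{(n-6)}\cdot 2^3,\,1^{(n-6)}\cdot 3^2\}$ with $12\le n\le 15$; $\mathcal{C}(x)=1^{(n-8)}\cdot 2^4$ with $12\le n\le 24$; or $\mathcal{C}(x)=1^{(n-9)}\cdot 3^3$ with $12\le n\le 30$; together with the finitely many $k$ with $\frac{n}{2}<k<n$.

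It then remains, for each such $(n,k)$ and each $x$ of the relevant cycle type with $1^x=k+1$ (up to $M$-conjugacy), to exhibit $y\in M$ with $\langle x,y\rangle=G$. I would do this in \textsc{Magma} exactly as in the proof of Lemma \ref{coded}: run through representatives of the $C_M(x)$-classes of $M$ and check that some representative generates $G$ together with $x$. Typically one may take $y$ with a single cycle of length $k$ or $k-1$ on $\Omega_1$ and a single cycle of length $n-k$ or $n-k-1$ on $\Omega_2$, the lengths chosen so that $y$ has the correct parity to lie in $G$.

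I expect the only real difficulty to be organisational: correctly listing the cycle types of $x$ for which $X$ contains no Jordan element requires scrutinising the powers of $x$, not just $x$ itself, and the support bound in part (iii) of Definition \ref{Jor} varies with $n$; one must also ensure the residual computation genuinely covers all admissible $(n,k)$ and all $M$-classes of $x$. There is no conceptual obstacle once the finite case list has been isolated.
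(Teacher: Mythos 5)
Your proposal is correct and follows essentially the same route as the paper: both reduce, via powers of $x$ that are visibly Jordan elements and the bound $|\supp{x}|>2(\sqrt{n}-1)$, to the cycle types $1^{(n-6)}\cdot 2^3$, $1^{(n-6)}\cdot 3^2$, $1^{(n-8)}\cdot 2^4$, $1^{(n-9)}\cdot 3^3$ with $n\leq 30$, and then finish by a finite \textsc{Magma} check. The only difference is bookkeeping in that check: the paper normalises $\supp{x}$ by $2$-set transitivity and tests only a small explicit family $\mathcal{Y}$ of elements $y$ with one or two long cycles of the correct parity (rather than all $C_M(x)$-classes of $M$, which would be too large for $n$ near $30$), which is exactly the shortcut your last paragraph anticipates.
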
 
\begin{proof}
If $\ct{x} \notin S:=\{ 1^{(n-6)} \cdot 2^3,  \; 1^{(n-6)} \cdot 3^2, \; 1^{(n-8)} \cdot 2^4, \; 1^{(n-9)} \cdot 3^3\}$, then $X$ contains a Jordan element, so assume that $\ct{x} \in S$. If $n >30$ then $2(\sqrt{n}-1)>9$, and so $x$ is a Jordan element, so assume that $n \leq 30$. Since $|\supp{x}| >2$, and $\A{m}$ is 2-set transitive for $m \geq 3$, we may assume that either $\{k+1,k+2\} \in \supp{x}$ or $\{1,2\} \in \supp{x}$. 

Suppose that Hypothesis \hyperref[]{\ref*{hypothesis}.\ref*{anodd}} holds. If $k+2 \in \supp{x}$, then let $\mathcal{Y}$ be the set of elements $y \in M$ such that $\ctm{y} = k \mid (n-k-1)1$ with $\Theta_3 = \{k+2\}$. If $2 \in \supp{x}$, then let $\mathcal{Y}$ be the set of elements $y \in M$ such that $\ctm{y} = (k-1)1 \mid (n-k)$ with $\Theta_2 =  \{2\}$. If Hypothesis \hyperref[]{\ref*{hypothesis}.\ref*{aneven}} holds let $\mathcal{Y}$ be the set of elements $y \in M$ with $\ctm{y} = k \mid (n-k)$. By Lemma \ref{oarity table}, $\mathcal{Y} \subseteq \A{n}$ if and only if $G = \A{n}$.

In all cases, using $\textsc{Magma}$ it is easy to verify that there exists $y \in \mathcal{Y}$ such that $\langle x, y \rangle =G$.
\end{proof}
\section{Proof of Theorems \ref{newmain} and \ref{mainsize}}
In this section we complete the proofs of Theorems \ref{newmain} and \ref{mainsize}.
\subsection{Hypothesis \hyperref[]{\ref*{hypothesis}.\ref*{anodd}}}
In this section we show that under Hypothesis \hyperref[]{\ref*{hypothesis}.\ref*{anodd}} for all $x \in G \backslash M$ there exists $y \in M$ such that $\langle x, y \rangle =G$. We begin by putting restrictions on $x$.

\begin{lemma} \label{3} Let $n, G ,M$ and $x$ be as in Hypothesis \hyperref[]{\ref*{hypothesis}.\ref*{anodd}}. If $|\supp{x} \cap \Omega_1| =1$ and $x$ is a Jordan element, then there exists $y \in M$ such that $\langle x, y \rangle =G$.
\end{lemma}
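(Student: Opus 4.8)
The plan is to produce a single explicit element $y \in M$ that works uniformly for every admissible $x$, and to deduce $\langle x, y\rangle = G$ by proving that this group is primitive and then applying Theorem~\ref{jordanstyle}. First I would record the shape of $x$: since $1^x = k+1 \notin \Omega_1$ and $|\supp{x} \cap \Omega_1| = 1$, we have $\supp{x} \subseteq \{1\} \cup \Omega_2$, so $x$ fixes $\Omega_1 \setminus \{1\}$ pointwise, the cycle of $x$ through $1$ has the form $(1, k+1, a_2, \dots)$ with all $a_i \in \Omega_2$, and every remaining cycle of $x$ lies in $\Omega_2$. Since $x \neq (1, k+1)$, this forces $|\supp{x} \cap \Omega_2| \geq 2$, hence $n - k \geq 2$ (the case $n - k = 1$ being vacuous). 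Fix a point $p \in \supp{x} \cap \Omega_2 \setminus \{k+1\}$.

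Now set $y = y_1 y_2 \in M$, where $y_1$ is a $k$-cycle on $\Omega_1$ and $y_2$ is an $(n - k - 1)$-cycle on $\Omega_2$ fixing $p$ (trivial when $n - k = 2$). Counting the disjoint cycles of $y$, including the fixed point $p$, gives $t = 3$, so by Lemma~\ref{oarity table} the element $y$ is even exactly when $n$ is odd; since $G = \A{n}$ when $n$ is odd and $G = \S{n}$ when $n$ is even, this places $y$ in $G$ for both parities, and when $n$ is even $y$ is an odd permutation in $G = \S{n}$ (a fact I will need at the end). Transitivity of $\langle x, y\rangle$ is then immediate: $\langle y_1\rangle$ is transitive on $\Omega_1$, $x$ carries $1$ to $k+1$, $\langle y_2\rangle$ is transitive on $\Omega_2 \setminus \{p\}$, and $p$ joins the remaining points because $p \in \supp{x}$ gives $p^x \in (\{1\} \cup \Omega_2) \setminus \{p\}$.

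The heart of the proof, and the step I expect to be the main obstacle, is showing $\langle x, y\rangle$ is primitive. Suppose instead it has a non-trivial block system $\mathcal{B}$ with blocks of size $d$, so $d \mid n$ and $1 < d < n$. Because $y_1$ is a $k$-cycle with $k > n/2$, the traces $\Delta \cap \Omega_1$ form a block system for $\langle y_1\rangle$ on $\Omega_1$ and so have a common size $e \mid k$, and $e = k$ is impossible — it would put $\Omega_1$ inside one block, forcing $d \geq k > n/2$ and hence $d = n$. Because $y_2$ fixes $p$ and is transitive on $\Omega_2 \setminus \{p\}$, the trace on $\Omega_2$ of the block $\Delta_p$ through $p$ is either $\{p\}$ or all of $\Omega_2$, and all other traces on $\Omega_2$ have a common size $f \mid n - k - 1$. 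I would then split into cases according to whether some block contains $\Omega_2$, whether $f \geq 2$, and whether $f = 1$. In the first two situations a short count with the trace sizes (together, in one sub-case, with an application of $x$) forces a block to contain $\Omega_1$, or else $n = k$, giving $d = n$. In the remaining case $f = 1$, every block meets $\Omega_2$ in exactly one point; applying $x$ to the block containing $1$, and using that $x$ fixes $\Omega_1 \setminus \{1\}$ pointwise and sends $1$ to $k+1$, shows that two distinct blocks meet $\Omega_1$ in the same non-empty set — a contradiction — unless the cycle of $x$ through $1$ has length $2$, in which case one obtains the same contradiction by instead applying $x$ to $\Delta_p$ and using $p \in \supp{x}$. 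This establishes primitivity.

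With $\langle x, y\rangle$ primitive and containing the Jordan element $x$, Theorem~\ref{jordanstyle} yields $\A{n} \leq \langle x, y\rangle$. When $n$ is odd we have $x, y \in \A{n} = G$, and when $n$ is even $y$ is odd; in either case $\langle x, y\rangle = G$, completing the proof. The only routine loose end is the small range $n - k \leq 2$, where $y_2$ is trivial and $x$ is forced to be the $3$-cycle $(1, k+1, k+2)$; the argument above still applies, now with the cycle of $x$ through $1$ having length $3$.
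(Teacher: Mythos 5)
Your construction is in fact the same as the paper's: you take exactly the element $y$ the authors use (a $k$-cycle on $\Omega_1$ times an $(n-k-1)$-cycle on $\Omega_2$ fixing a chosen point of $\supp{x}\cap\Omega_2\setminus\{k+1\}$, which the paper calls $t$ and you call $p$), verify $y\in G$ by the same parity count of three cycles, prove transitivity the same way, and finish identically via Theorem~\ref{jordanstyle} applied to the Jordan element $x$. The only divergence is the primitivity step. The paper's argument is shorter and cleaner: take a non-singleton block $\Delta$ containing the $y$-fixed point $t$; then $\Delta^y=\Delta$, so $\Delta$ contains a full $\langle y\rangle$-orbit together with $t$; if that orbit is $\Omega_1$ then $|\Delta|>n/2$, and if it is $\Omega_2\setminus\{t\}$ then $t^{x^{-1}}\in\Omega_2\subseteq\Delta$ forces $\Delta^x=\Delta$, so in either case $\Delta=\Omega$ by transitivity. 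Your alternative, bookkeeping the trace sizes of blocks on $\Omega_1$ and $\Omega_2$, does work --- I checked that each of your cases (a block containing $\Omega_2$, $f\geq 2$, and $f=1$) can be closed along the lines you indicate, including the fallback through $\Delta_p$ when the cycle of $x$ through $1$ is a transposition --- but as written it is only a sketch: the phrase ``or else $n=k$'' is garbled (in the sub-case $\Delta_p=\Omega_2$ the contradiction is an $H$-invariant proper subset, obtained by applying $x$ at a point of $\supp{x}\cap\Omega_2$ whose image stays in $\Omega_2$, not $d=n$), and in the $f=1$ case the claim that ``two distinct blocks meet $\Omega_1$ in the same non-empty set'' silently skips the possibility $\Delta_1^x=\Delta_1$ (which is exactly where the $\Omega_2$-trace being a single point, and then your length-2 fallback, must be invoked). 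So: correct in substance and essentially the paper's proof, with a more laborious primitivity verification that you would need to write out in full; if you want the streamlined version, exploit the $y$-fixed point as above.
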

\begin{proof}
By Hypothesis \ref{hypothesis}, there exists a point $t \in \supp{x} \backslash \{1, k+1\}$. Our assumption that $|\supp{x} \cap \Omega_1|=1$ implies that $t \in \Omega_2$.

By Lemma \ref{oarity table}, elements of $\S{n}$ composed of three cycles lie in $\A{n}$ if and only if $G =\A{n}$, so there exists $y \in M$ satisfy
$$\ctm{y} = k \mid (n-k-1)1,$$
with $\Theta_3 = \{t\}$. Let $H = \langle x, y\rangle$ and let $Y = \langle y \rangle$. Since $1 \in \Theta_1$ and $k+1 \in \Theta_2,$ it follows that $\Theta_1 \cup \Theta_2 = \Omega \backslash \{t\} \subseteq 1^H$. Since $t \in \supp{x}$, the group $H$ is transitive.

We show that $H$ is primitive. Let $\Delta$ be a non-singleton block for $H$ containing $t$. Let $a \in \Delta \backslash \{t\}$. Since $t$ is fixed by $y$, it follows that $\Delta^y = \Delta$. Hence $a^Y \cup \{t\} \subseteq \Delta$. 
If $a \in \Theta_1$, then $|\Delta| \geq k+1 > \frac{n}{2}$ and so $\Delta = \Omega$. If $a \in \Theta_2$, then $\Theta_2 \cup \{t\} \subseteq \Delta$. Since $\supp{x} \cap \Theta_1 = \{1\}$ and $( k+1)^{x^{-1}} =1 \neq t^{x^{-1}}$, it follows that $t^{x^{-1}} \in \Theta_2 \subseteq \Delta$. Hence $\Delta^{x^{-1}} =\Delta$, and so $\Delta^H = \Delta$. By the transitivity of $H$, it follows that $\Delta = \Omega$. 

Hence $H = \langle x, y \rangle$ is primitive, and contains the Jordan element $x$. Thus $\A{n} \leq H$, by Theorem \ref{jordanstyle}, and so $H = G$. \end{proof}

We now show that if $|\Omega_1 \cap \supp{x}|=1$, then there exists $y \in M$ such that $\langle x, y \rangle =G$.
\begin{lemma} \label{2}
Let $n, G ,M$ and $x$ be as in Hypothesis \hyperref[]{\ref*{hypothesis}.\ref*{anodd}}. If $|\supp{x} \cap \Omega_1| =1$, then there exists $y \in M$ such that $\langle x, y \rangle =G$.
\end{lemma}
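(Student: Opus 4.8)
The plan is to peel off the cases already covered by the preliminary lemmas and then to dispatch the remaining ``generic'' configuration by constructing one suitable $y$.

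First I reduce. Suppose $\langle x\rangle$ contains a Jordan element. Then the construction in the proof of Lemma~\ref{3} still applies with only cosmetic changes: choosing $t\in\supp x\setminus\{1,k+1\}$ (which lies in $\Omega_2$ since $|\supp x\cap\Omega_1|=1$) and $y\in M$ with $\ctm y=k\mid(n-k-1)1$ and $\Theta_3=\{t\}$, the arguments there that $H=\langle x,y\rangle$ is transitive and primitive use only $1^x=k+1$ and $|\supp x\cap\Omega_1|=1$, not that $x$ itself is Jordan; since $H\ge\langle x\rangle$ then contains a Jordan element, Theorem~\ref{jordanstyle} gives $H=G$. (In particular this recovers Lemma~\ref{3}.) So assume henceforth that $\langle x\rangle$ contains no Jordan element. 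Then Lemma~\ref{small8} lets us assume $|\supp x|\ge 8$ and $\mathcal C(x)\notin T$, and Lemma~\ref{n-k10} lets us assume $n-k>10$. Since $\supp x\subseteq\{1\}\cup\Omega_2$, we have $|\supp x|\le n-k+1$, so conclusion~(ii) of Lemma~\ref{pnk} would force $|\supp x|<2(\sqrt n-1)$ and make $x$ a Jordan element of Definition~\ref{Jor}\ref{LS}, a contradiction; hence Lemma~\ref{pnk}(i) supplies a prime $p$ with $2<p<n-k-3$ and $p\nmid k$.

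Now I build $y\in M$ with $\langle x,y\rangle=G$. Because $\supp x$ meets $\Omega_1$ only in the point $1$, the group $\langle x,y\rangle$ can ``enter'' $\Omega_1$ only at $1$, so transitivity forces $y|_{\Omega_1}$ to be a single $k$-cycle on $\Omega_1$; take it to be one. On $\Omega_2$, let $y|_{\Omega_2}$ consist of a $p$-cycle on a set $P\subseteq\Omega_2$ of size $p$, together with one or two further cycles filling out $\Omega_2\setminus P$, all of lengths coprime to $p$ (possible since $p\ge 3$); the number of these extra cycles can be taken of either parity, so by Lemma~\ref{oarity table} we may arrange $y\in G$. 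Then, using Lemma~\ref{8pts}\ref{8pts1} --- which applies because $|\supp x|\ge 8$ and $\mathcal C(x)\notin T$, so $\mathcal C(x)$ is none of the three excluded cycle types --- pick distinct points $r,r^x,s,s^x,t,t^x\in\Omega_2\setminus\{k+1\}$, and choose $P$ and the cycles on $\Omega_2\setminus P$ so that one arc of $x$ lies inside $P$ while the remaining arcs, together with the arc $1\mapsto k+1$, splice the $k$-cycle on $\Omega_1$, the $p$-cycle on $P$, and the cycle(s) on $\Omega_2\setminus P$ into a single $\langle x,y\rangle$-orbit. Thus $\langle x,y\rangle$ is transitive. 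To finish, let $\mathcal B$ be a nontrivial, hence non-singleton, block system for $H=\langle x,y\rangle$; the $p$-cycle of $y$ has prime length coprime to the lengths of all other cycles of $y$, so by Lemma~\ref{coprime cycle} its support $P$ lies in a single block $\Delta$ with $\Delta^y=\Delta$, and the arc of $x$ inside $P$ then gives $\Delta^x=\Delta$, so $\Delta$ is $H$-invariant and hence equals $\Omega$ by transitivity. So $H$ is primitive. Since $p\nmid k$ and $p$ is coprime to the lengths of all remaining cycles of $y$, the power $y^{j}$ with $j=\operatorname{lcm}$ of the lengths of all cycles of $y$ other than the $p$-cycle is a $p$-cycle fixing at least three points, i.e.\ a Jordan element of Definition~\ref{Jor}\ref{Jones} lying in $\langle y\rangle\le H$. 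Theorem~\ref{jordanstyle} now gives $\A n\le H$, and since $x,y\in G$ we conclude $H=G$.

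The hard part is the middle step: the cycle structure of $y|_{\Omega_2}$ and the placement of the points of Lemma~\ref{8pts} must be chosen so that transitivity, primitivity, the parity condition of Lemma~\ref{oarity table}, and the non-triviality of $y^{j}$ all hold at once; in particular the tension between keeping $y$ with few cycles (so the limited supply of arcs from Lemma~\ref{8pts} can splice all the $\langle y\rangle$-orbits together) and having the correct sign, combined with whether $p$ divides $n-k$, forces a careful case analysis of the divisibilities among $k$, $p$ and $n-k$, and is where Lemmas~\ref{pk} and~\ref{pnk} are used in an essential way (with the few remaining configurations --- those in which $\supp x$ is so small that $n$ is bounded --- treated separately). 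The primitivity verification, ruling out every non-trivial block system via Lemmas~\ref{div}, \ref{comb} and~\ref{coprime cycle}, is the other technical point, but should be routine once $y$ is pinned down.
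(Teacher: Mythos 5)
Your reduction steps are sound and are essentially the paper's: handle Jordan elements via the construction of Lemma~\ref{3} (your observation that the transitivity/primitivity argument there only needs $1^x=k+1$ and $|\supp{x}\cap\Omega_1|=1$, so it covers the case that some power of $x$ is Jordan, is fine), then invoke Lemmas~\ref{n-k10} and~\ref{small8}, and rule out conclusion (ii) of Lemma~\ref{pnk} to obtain the prime $\pnk$. The gap is exactly in what you call ``the hard part'', and it is not just bookkeeping. First, a parity slip: your final step ``$\A{n}\le H$ and $x,y\in G$, hence $H=G$'' is invalid when $G=\S{n}$, since $x$ may be even; you must choose $y$ odd in that case. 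By Lemma~\ref{oarity table} this forces the total number of cycles of $y$ (counting fixed points) to be \emph{odd} under Hypothesis \ref{hypothesis}.\ref{anodd}, whether $n$ is odd or even --- so ``the number of extra cycles can be taken of either parity'' is not available to you. Second, this parity constraint collides with your recipe precisely when $\pnk\mid(n-k)$: a single extra cycle on $\Omega_2\setminus P$ has length $n-k-\pnk$ divisible by $\pnk$ (killing both Lemma~\ref{coprime cycle} and the clean $\pnk$-cycle power), while two non-trivial extra cycles give four cycles in total, the wrong parity. The paper's resolution is to take $\ctm{y}=k\mid\pnk\,(n-k-\pnk-2)\,1^2$, i.e.\ two fixed points of $y$ inside $\supp{x}$, and for transitivity these fixed points must be joined to the rest of the orbit by $x$; that is exactly why the paper uses Lemma~\ref{8pts}\ref{8pts2} (the pair $u,v$ with $(u,v)$ not a cycle of $x$) rather than Lemma~\ref{8pts}\ref{8pts1}, which you chose. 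With only the six points $r,r^x,s,s^x,t,t^x$ of part \ref{8pts1}, placing two of them as fixed points of $y$ while keeping an $x$-arc inside $P$ does not yield a connected orbit in general (e.g.\ the two fixed points could form a $2$-cycle of $x$, or the arcs available simply fail to link $P$ to the component of $\Omega_1$), so your construction does not go through in the case $\pnk\mid(n-k)$. (In the complementary case $\pnk\nmid(n-k)$ your sketch, with $t,t^x,s\in P$ and $s^x$ in the cycle containing $k+1$, coincides with the paper's and is correct.) Finally, the further case analysis you anticipate --- invoking Lemma~\ref{pk} and treating extra bounded-$n$ configurations separately --- is not needed for this lemma; the two cases $\pnk\mid(n-k)$ and $\pnk\nmid(n-k)$ suffice, but they must actually be carried out, and the first one requires the part \ref{8pts2} data and the $5$-cycle shape of $y$ that your proposal omits.
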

\begin{proof} 
By Lemma \ref{3}, the result holds if $x$ is a Jordan element, and by Lemma \ref{n-k10} the result holds if $n-k \leq 10$. Hence we may assume that $n-k>10$ and that $|\supp{x}| \geq 2(\sqrt{n}-1)$. Thus $2(\sqrt{n}-1) \leq n-k+1$, so there exists a prime $\pnk$ as in Lemma \ref{pnk}. In addition, by Lemma \ref{small8} the result holds if $|\supp{x}| < 8$ or if $\mathcal{C}(x) = 1^{(n-8)} \cdot 2^4$, so we may assume otherwise. Hence we may let $s, t, u, v \in \supp{x} \backslash \{1, k+1\}$ be as in Lemma \hyperref[]{\ref*{8pts}.\ref*{8pts2}}. \\

The proof splits into two cases. First suppose that $\pnk \mid (n-k)$. By Lemma \ref{oarity table}, elements composed of five cycles lie in $\A{n}$ if and only if $G = \A{n}$, so there exists $y \in M$ such that 
$$\ctm{y} = k  \mid \pnk  (n-k-\pnk-2)  1^2,$$
with $s, t, t^x \in \Theta_2$, $k+1, s^x \in \Theta_3$, $\Theta_4 = \{u\}$ and $\Theta_5 = \{v\}$. Let $H = \langle x, y \rangle$. Since $1 \in \Theta_1$ and $k+1 \in \Theta_3$, it follows that $\Theta_1, \Theta_3 \subseteq1^H$. Then because $s \in \Theta_2$ and $s^x \in \Theta_3$, it follows that $\Theta_2 \subseteq1^H$. Since $(u, v)$ is not a cycle of $x$ and $\Omega \backslash \{u,v\}\subseteq1^H$, the group $H$ is transitive.

Let $\mathcal{B}$ be a non-singleton block system for $H$. Since $\pnk >2$ and $\pnk \mid (n-k)$, it follows that $\pnk \nmid (n-k-\pnk-2)$. Hence by Lemma \ref{coprime cycle}, there exists a block $\Delta \in \mathcal{B}$ such that $\Theta_2 \subseteq\Delta$. Therefore $\Delta^y = \Delta$. Furthermore, from $t, t^x \in \Theta_2$ we deduce that $\Delta^H = \Delta$, and hence $\Delta = \Omega$. Thus $H $ is primitive and contains the Jordan element $y^{k(n-k-\pb-2)}$, and so $H = G$. \\

Next suppose that $\pnk \nmid (n-k)$. By Lemma \ref{oarity table}, elements composed of three cycles lie in $\A{n}$ if and only if $G = \A{n}$, so there exists $y \in M$ such that
$$\ctm{y} = k \mid \pnk (n-k-\pnk),$$
with $s, t, t^x \in \Theta_2$ and $k+1, s^x \in \Theta_3$. Let $H = \langle x, y\rangle $. The argument that $H$ is transitive, primitive and contains a $\pnk$-cycle follows as in the previous case, and so $H = G$.
\end{proof}

We now complete the proof that under Hypothesis \hyperref[]{\ref*{hypothesis}.\ref*{anodd}} there exists $y \in M$ such that $\langle x, y \rangle =G$.

\begin{lemma} \label{1}
Let $n, G ,M$ and $x$ be as in Hypothesis \hyperref[]{\ref*{hypothesis}.\ref*{anodd}}. Then there exists $y \in M$ such that $\langle x, y \rangle =G$.
\end{lemma}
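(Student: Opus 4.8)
The plan is to reduce to the case already handled in Lemma \ref{2} by using that lemma together with conjugation inside $M$, and otherwise to build a primitive overgroup $H = \langle x, y\rangle$ containing a Jordan element, exactly as in the proofs of Lemmas \ref{3} and \ref{2}. By Lemma \ref{2} we may assume $|\supp{x} \cap \Omega_1| \geq 2$. Since $1^x = k+1$ and $x \neq (1,k+1)$, we also have $|\supp{x} \cap \Omega_2| \geq 1$; if in fact $|\supp{x} \cap \Omega_2| = 1$ then $|\supp{x} \cap \Omega_1| \geq 2$, and applying the same argument with the roles of $\Omega_1$ and $\Omega_2$ swapped (using that the stabilisers of $\Omega_1$ and $\Omega_2$ in $G$ coincide, so ``intransitive of type $k \mid n-k$'' is symmetric in this sense) together with Lemmas \ref{3}, \ref{2} disposes of that case as well. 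Hence we may assume $|\supp{x} \cap \Omega_1| \geq 2$ and $|\supp{x} \cap \Omega_2| \geq 2$. By Lemma \ref{small8} we may further assume $|\supp{x}| \geq 8$ and $\mathcal{C}(x) \notin T$, and by Lemma \ref{8pts}\ref{8pts1} there are distinct points $r, r^x, s, s^x, t, t^x \in \supp{x} \backslash \{1, k+1\}$ (we may also arrange, after conjugating by an element of $M$ preserving the relevant orbit structure, a convenient distribution of these points among $\Omega_1$ and $\Omega_2$).

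Next I would choose $y \in M$ of a suitable cycle type so that $H = \langle x, y \rangle$ is transitive: the idea, as in Lemma \ref{2}, is to pick $y$ with one long cycle on (most of) $\Omega_1$, one long cycle on (most of) $\Omega_2$, and a small number of short cycles or fixed points positioned so that the points $r, r^x, s, s^x, t, t^x$ are knitted together across the two $\Omega_i$-orbits of $y$; this forces $\Omega \subseteq 1^H$ and transitivity follows once $x$ does not restrict to a permutation fixing the leftover points. The parity of $y$ is controlled by Lemma \ref{oarity table}, so the cycle type can always be chosen with $y \in G$. Then I would show $H$ is primitive by the standard block argument: a non-singleton block $\Delta$ must, by Lemma \ref{coprime cycle} (applied to a cycle of $y$ of prime length coprime to the other cycle lengths — build such a prime into the cycle type using Bertrand/Lemma \ref{pk}), contain one whole $y$-cycle, hence be $y$-invariant; using a pair $t, t^x \in \Delta$ forces $\Delta^x = \Delta$ as well, so $\Delta^H = \Delta$ and transitivity gives $\Delta = \Omega$. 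Finally, a suitable power of $y$ (a cycle fixing at least three points, or a short-support element) is a Jordan element of $H$, so Theorem \ref{jordanstyle} gives $\A{n} \leq H$, whence $H = G$.

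The main obstacle I expect is the bookkeeping for small $n$ and for degenerate cycle types of $x$ — precisely the situations where one cannot find six distinct points $r, r^x, \ldots$ in $\supp{x} \backslash \{1,k+1\}$ split appropriately between $\Omega_1$ and $\Omega_2$, or where the only available prime coprime to the other cycle lengths of $y$ forces $|\supp{x}|$ to be too small relative to $n$ for the Jordan-element argument to apply. These are already the reason Lemmas \ref{n-k10}, \ref{small8}, \ref{8pts} were isolated, so the real content is to verify that, with $|\supp{x} \cap \Omega_1| \geq 2$ and $|\supp{x} \cap \Omega_2| \geq 2$ and $n \geq 12$, the excluded cycle types of Lemma \ref{8pts}\ref{8pts1} together with the cases $n-k \leq 10$ and $|\supp{x}| < 8$ covered earlier really do exhaust everything, so that the generic construction above always goes through. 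A secondary subtlety is ensuring the chosen $y$ genuinely lies in $M$ (i.e. respects the partition $\Omega_1 \sqcup \Omega_2$) while still having a prime-length cycle coprime to its other cycle lengths; this is where Lemma \ref{pk}, Lemma \ref{coprime}, and a careful split into the cases $\pk \mid (n-k)$ versus $\pk \nmid (n-k)$ (as in Lemma \ref{2}) will be needed.
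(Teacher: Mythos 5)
There are two genuine gaps. First, your reduction of the case $|\supp{x} \cap \Omega_2| = 1$ by ``swapping the roles of $\Omega_1$ and $\Omega_2$'' does not work: Lemmas \ref{3} and \ref{2} are not symmetric in the two parts, because their proofs use $|\Omega_1| = k > \frac{n}{2}$ in an essential way (a block meeting a $k$-cycle's support has size $> \frac{n}{2}$; Lemma \ref{n-k10} rests on $|\supp{x}| \leq n-k+1 \leq 11$, which fails when the singleton intersection is with the \emph{small} part; and the prime $\pnk$ of Lemma \ref{pnk} is tied to $n-k$, not $k$). Indeed, under Hypothesis \hyperref[]{\ref*{hypothesis}.\ref*{aneven}} the paper needs separate arguments for the two singleton cases (Lemmas \ref{7} versus \ref{small in O1} and \ref{8ish}, the latter using a different prime $\pk$ from Lemma \ref{pk}). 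In the paper's treatment of Hypothesis \hyperref[]{\ref*{hypothesis}.\ref*{anodd}} this case is not split off at all: once $|\supp{x} \cap \Omega_1| \geq 2$, the construction only uses a point $t \in (\supp{x}\cap\Omega_1)\setminus\{1\}$ and makes no assumption on $|\supp{x}\cap\Omega_2|$, so your extra reduction is both unjustified and unnecessary.

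Second, the generic step is only a sketch, and the difficulties it waves at are exactly where the real work lies. The paper takes $y$ with $\ctm{y} = (k-\pb)\,\pb \mid (n-k)$, where $\pb$ is the Bertrand prime for $k$; here the prime cycle need \emph{not} be coprime to the other cycle lengths (Lemma \ref{coprime} allows $\pb = n-k$), so Lemma \ref{coprime cycle} cannot simply be invoked, and one cannot in general place a pair $t, t^x$ inside a single cycle of $y$ (if $t^x \in \Omega_2$ no cycle of an element of $M$ contains both), so ``$t, t^x \in \Delta$ forces $\Delta^x = \Delta$'' is not available as you state it. Moreover, for a single fixed $y$ the group $\langle x, y\rangle$ can genuinely be imprimitive; the paper's proof defeats this by varying $y$ over families ($\mathcal{Y}$, then subfamilies $\mathcal{Y}_1$, $\mathcal{Y}_a$ chosen to break each putative block system), by analysing the exceptional configurations $n = 2\pb$, $n = 3\pb$ and $\pb = n-k$, and by treating the degenerate case $k = \pb+2$, $n-k = \pb$ with a different cycle type $3(\pb-1) \mid \pb$. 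None of this appears in your outline, and your closing paragraph essentially concedes that this verification is the content of the lemma; as it stands the proposal does not establish the statement.
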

\begin{proof}
If $|\Omega_1 \cap \supp{x}| =1$, then the result holds by Lemma \ref{2}. Therefore we may assume that $|\supp{x} \cap \Omega_1| \geq 2$, and so there exists $t \in \big( \supp{x} \cap \Omega_1 \big) \backslash \{1\}$. Since $k \geq 7$, there exists a prime $\pb$ with $5 \leq \pb \leq k-2$, by Theorem \ref{BPC}.\\

First assume that $k=\pb+2$ and $n-k=\pb$. Hence $n=2\pb+2$. Thus $n$ is even and so $G = \S{n}$ by the assumption that Hypothesis \hyperref[]{\ref*{hypothesis}.\ref*{anodd}} holds. By Lemma \ref{oarity table}, elements of $\S{n}$ composed of three cycles are in $\S{n} \backslash \A{n}$. Let $y \in M$ satisfy 
$$\ctm{y} = 3(\pb-1)\mid \pb,$$
with $1 \in \Theta_1$, $t \in \Theta_2$ and $t^x \notin \Theta_2$. Let $H = \langle x, y\rangle$. Since $1^x = k+1$, it follows that $\Theta_1, \Theta_3 \subseteq 1^H$. Then $t \in \Theta_2$ and $t^x \in \Theta_1 \cup \Theta_3$, so $H$ is transitive.

Let $\mathcal{B}$ be a non-singleton block system for $H$. By Lemma \ref{coprime cycle}, there exists a block $\Delta \in \mathcal{B}$ with $\Theta_3 \subseteq\Delta$. Hence $\Delta^y = \Delta$ and so $\Delta$ is a union of the orbits of $y$ and contains $\Theta_3$. Since $|\Delta| \mid n$, it follows that $\Delta = \Omega$. Hence $H$ is primitive and contains the Jordan element $y^{3(\pb-1)}$, so $H = G$.\\

If $k \neq \pb +2$, then $k > \pb+2$, so for the remainder of the proof we may assume that 
\begin{equation}\label{assumption}
k - \pb > 2 \;\;\; \text{ or } \;\;\; n-k \neq \pb.
\end{equation}

Let $\mathcal{Y}$ be the set of elements $y \in M$ satisfying 
$$\ctm{y} = (k-\pb)\pb \mid (n-k)$$
with $1 \in \Theta_1$, $t \in \Theta_2$ and $t^x \notin \Theta_2$. 
By Lemma \ref{oarity table}, $\mathcal{Y} \neq \emptyset$, and consists of elements of $\A{n}$ if and only if $G = \S{n}$. For all $y \in \mathcal{Y},$ let $H = H(y) = \langle x, y\rangle $ and let $Y = \langle y \rangle$.
The proof of transitivity is identical to the previous case. We assume, by way of contradiction, that $H(y)$ is imprimitive for all $y \in \mathcal{Y}$, and let $\mathcal{B}$ be a non-trivial block system for $H$.

\smallskip 
First suppose, by way of contradiction, that there exists $\Delta_1 \in \mathcal{B}$ with $\Theta_2 \subseteq \Delta_1$. We begin by showing that if $\Theta_2 \subseteq \Delta_1$, then $\Delta_1 = \Theta_2$.
Suppose otherwise, and let $a \in \Delta_1 \backslash \Theta_2$. From $\Theta_2 \subseteq \Delta_1$ we see that $\Delta_1^y = \Delta_1$. If $a \in \Theta_1$, then $\Theta_1 \cup \Theta_2  \subseteq \Delta_1$ and so $|\Delta_1| \geq k > \frac{n}{2}$, a contradiction. Hence $a \in \Theta_3$, so $\Theta_2 \cup \Theta_3 \subseteq \Delta_1$, yielding the contradiction
$$|\Delta_1| \geq \pb + n-k > n-\frac{k}{2} > \frac{n}{2}.$$ 
Hence $\Delta_1 = \Theta_2$ and $\pb \mid n$. Since $\frac{n}{2} < k < 2 \pb$, it follows that $n< 4\pb,$ and consequently either $n =2\pb$ or $n=3\pb$. 

If $n=2\pb$, then $\mathcal{B}$ consists of two blocks $\Delta_1=\Theta_2 $ and $\Delta_2 = \Omega \backslash \Delta_1 = \Theta_1 \cup \Theta_3$. Since $1$ and $k+1 = 1^x \in \Delta_2$ both $x$ and $y$ leave $\Delta_2$ invariant, contradicting the transitivity of $H$.

If $n=3\pb$, then there exist blocks $\Delta_2$ and $\Delta_3$ such that $\mathcal{B} =\{ \Delta_1, \Delta_2, \Delta_3\}$. Hence $\Delta_2 \cup \Delta_3 = \Theta_1 \cup \Theta_3$. Since $\pb > \frac{k}{2}$, it follows that $|\Delta_2| = \pb $ does not divide $|\Theta_1|$. Hence $\Delta_2$ intersects both $\Theta_1$ and $\Theta_3$ non-trivially, and so $y^{\mathcal{B}} = (\Delta_2, \Delta_3)$. If there exists $\alpha \in \Delta_1$ such that $\alpha^x \in \Delta_1$, then $\Delta_1^x = \Delta_1= \Delta_1^y$, a contradiction. Therefore $\Delta_1^x \subseteq \Theta_1 \cup \Theta_3$ and $|\Delta_1| \geq 5$. Thus there exist distinct points $a_1, a_2 \in \Delta_1$ with $a_1^x, a_2^x$ both in $\Theta_1$ or both in $\Theta_3$. Let 
$$\mathcal{Y}_1 = \{y \in \mathcal{Y} \; | \; (a_1^x)^y = a_2\},$$
and notice that $\mathcal{Y}_1 \neq \emptyset$. Hence for all $y \in \mathcal{Y}_1$, the block $\Delta_2$ contains exactly one of $\{a_1^x ,a_2^x\}$. Thus $\emptyset \neq (\Delta_1^x \cap \Delta_2) \neq \Delta_2$, a contradiction.\\

Therefore if $n$ is even and $y \in \mathcal{Y}$, or if $n$ is odd and $y \in \mathcal{Y}_1$, there is no block $\Delta_1$ with $\Theta_2 \subseteq \Delta_1$. Hence it follows from Lemma \ref{comb}\ref{pcy} that $c_2^{\mathcal{B}}$ is a $\pb$-cycle. Let $\Delta \in \supp{c_2^{\mathcal{B}}}$. Since $\pb > k-\pb$ and $\Delta$ is non-trivial, it follows that $c_3^{\mathcal{B}}$ is also a $\pb$-cycle. Since $n-k < k <2 \pb,$ it follows that $\pb = n-k$ and so $|\Delta|=2$. Therefore $n$ is even and  $c_1^{\mathcal{B}}$ is a $\big( \frac{k-\pb}{2} \big)$-cycle.

From $\pb = n-k$ and \eqref{assumption}, it follows that $k - \pb > 2$. Therefore, since $c_1^{\mathcal{B}}$ is a $\big( \frac{k-\pb}{2} \big)$-cycle we deduce that there exists $a \in \Theta_1 \backslash \{1, t^{x^{-1}}\}$, and the set
$$\mathcal{Y}_a = \Bigg\{y \in \mathcal{Y} : 1^{y^{\frac{k-\pb}{2}}} =a\Bigg\} $$
is non-empty. For all $y \in \mathcal{Y}_a$, it follows that $\Delta_a = \{1, a\}$ is a block for $H(y)$. 
Consider $\Delta_a^x = \{k+1, a^x\}$. If $a^x \in \Omega_2$, then $\Delta_a^x \subseteq\Omega_2 = \Theta_3$, contradicting the fact that $c_3$ acts regularly on blocks.
Hence $a^x \in \Omega_1$. Since $a \neq t^{x^{-1}}$, it follows that $a^x \neq t$ and so there exists $y \in \mathcal{Y}_a$ such that $a^x \in \Theta_1$. Thus $k+1 \in \Delta_a^x \cap \Theta_3 $ and $a^x \in \Delta_a^x \cap \Theta_1$, contradicting the fact that $c_1$ and $c_3$ act on disjoint sets of blocks.

Hence there exists $y \in \mathcal{Y}_1$ or $y \in \mathcal{Y}_a$ such that $H = \langle x, y \rangle$ is primitive. If $n-k \neq \pb$, then $H$ contains the $\pb$-cycle $y^{(k-\pb)(n-k)}$. If $n-k = \pb$, then $H$ contains the $(k-\pb)$-cycle $y^{\pb}$. Thus in both cases $H = G$ by Theorem \ref{jordanstyle}.
\end{proof}

\subsection{Hypothesis \hyperref[]{\ref*{hypothesis}.\ref*{aneven}}}
In this section we show that for $n, G, M$ and $x$ as in Hypothesis \hyperref[]{\ref*{hypothesis}.\ref*{aneven}} there exists $y \in M$ such that $\langle x, y \rangle =G$. We begin with the case $|\Omega_1 \cap \supp{x}| = 2 = |\Omega_2 \cap \supp{x}|.$
\begin{lemma} \label{less than 4} Let $G, M,n$ and $x$ be as in Hypothesis \hyperref[]{\ref*{hypothesis}.\ref*{aneven}}. If $|\supp{x} \cap \Omega_1| = 2$ and $|\supp{x}  \cap \Omega_2| = 2$, then there exists $y \in M$ such that $\langle x, y \rangle =G$.
\end{lemma}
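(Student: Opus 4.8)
The plan is to observe that $x$ is always a Jordan element and then to exhibit a $y\in M$ for which $H:=\langle x,y\rangle$ is transitive and primitive. Since $1^x=k+1$ and $|\supp{x}\cap\Omega_1|=|\supp{x}\cap\Omega_2|=2$, we have $\supp{x}=\{1,a,k+1,b\}$ with $a\in\Omega_1\setminus\{1\}$ and $b\in\Omega_2\setminus\{k+1\}$, and $x$ is either a $4$-cycle through $1$ and $k+1$ (so it fixes $n-4\ge 8$ points, hence is a Jordan element by Definition~\ref{Jor}) or the product $(1,k+1)(a,b)$ of two transpositions (again a Jordan element by Definition~\ref{Jor}). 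So once $H$ is transitive and primitive, Theorem~\ref{jordanstyle} gives $\A{n}\le H$; and then $H=G$, because when $G=\S{n}$ (so $n$ is odd) either $x$ is an odd $4$-cycle already lying in $H$, or the $y$ produced below has exactly two cycles and is therefore odd by Lemma~\ref{oarity table}. Thus the parity bookkeeping needs no separate argument.

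For $y$ I would take the product $y=c_1c_2$ of a $k$-cycle $c_1$ on $\Omega_1$ and an $(n-k)$-cycle $c_2$ on $\Omega_2$. By Lemma~\ref{oarity table}, $y$ lies in $\A{n}$ exactly when $n$ is even, i.e.\ exactly when $G=\A{n}$, so $y\in M$ in all cases. Transitivity of $H$ is immediate: $c_1$ puts all of $\Omega_1$ into one orbit containing $1$, the element $x$ sends $1$ to $k+1$, and $c_2$ puts all of $\Omega_2$ into one orbit. The content is to prescribe the cyclic orders of $c_1$ and $c_2$ — fixing the positions of $1,a$ inside $c_1$ and of $k+1,b$ inside $c_2$, the remaining points placed arbitrarily — so that $H$ has no non-trivial block system.

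To prove primitivity, suppose $\mathcal{B}$ is a non-trivial block system of block size $d$. If no block meets both $\Omega_1$ and $\Omega_2$, then $\Omega_1$ and $\Omega_2$ are each unions of blocks, so $d\mid\gcd(k,n-k)$; the block $\Delta_0\ni 1$ lies in $\Omega_1$, while $\Delta_0^x$ is a block containing $1^x=k+1\in\Omega_2$, hence is contained in $\Omega_2$; since $x$ fixes $\Omega_1\setminus\{1,a\}$ pointwise, this forces $d\le 2$, and $d=2$ makes $\Delta_0=\{1,a\}$, contradicting the fact that then $\{1,a\}^{c_1}$ is another block overlapping $\{1,a\}$ (which we arrange by placing $a$ adjacent to $1$ in $c_1$). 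If some block meets both $\Omega_1$ and $\Omega_2$, then by Lemma~\ref{comb}\ref{sameinduced} every block does; the blocks are then indexed by $\mathbb{Z}/\ell$ with $\ell=n/d\ge 2$, each contains $k/\ell$ points of $\Omega_1$ and $(n-k)/\ell$ points of $\Omega_2$, and $\ell\mid\gcd(k,n-k)$, so $\ell<k$ and each block meets $\Omega_1$ in at least two points. Reading the blocks off from $c_1,c_2$, the block $\Delta^{(j)}$ meets $\Omega_1$ in a residue class of positions in $c_1$ and meets $\Omega_2$ in a residue class of positions in $c_2$ (up to a fixed offset). Using that each $\Omega_1$-part of a block contains a point outside $\{1,a\}$, hence fixed by $x$, one shows that the blocks containing $1$ and $a$ are $x$-invariant, so $k+1=1^x$ and $b=a^x$ lie in the blocks of $1$ and of $a$ respectively; translating this through the prescribed positions of $k+1$ and $b$ in $c_2$ forces $\ell$ to divide a fixed nonzero integer smaller than any admissible value of $\ell$, a contradiction. (Note that if $\gcd(k,n-k)=1$, both cases are vacuous and the choice of $c_1,c_2$ is irrelevant.)

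The step I expect to be the main obstacle is this last one: choosing the positions of $1,a$ in $c_1$ and of $k+1,b$ in $c_2$ so that the divisibility conditions forced by the crossing-block case fail for every $\ell\ge 2$. Careless choices leave intact the "halving" block systems of size $n/2$, which occur precisely when $\gcd(k,n-k)$ is even and are exactly the systems created by aligning the positions badly; and when $n-k$ is small there is little room to separate $k+1$ and $b$ inside $c_2$, so the tight cases (for instance $n-k=2$) may instead need the positions inside $c_1$ adjusted. Producing a single choice of $c_1$ and $c_2$ that destroys every block system uniformly is the crux of the argument.
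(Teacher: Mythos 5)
Your overall framework is the same as the paper's (take $y$ of cycle type $k \mid (n-k)$, check parity with Lemma \ref{oarity table}, prove transitivity and primitivity, and finish via Theorem \ref{jordanstyle} since $x$ is a Jordan element), but the proof has a genuine gap exactly where you flag it: you never produce a choice of $c_1,c_2$ that actually kills every block system, and the fallback you hint at (place $a$ adjacent to $1$ in $c_1$, separate $b$ from $k+1$ in $c_2$) provably fails in the tight cases. Concretely, take $n=12$, $k=10$ (so $G=\A{12}$ under Hypothesis \hyperref[]{\ref*{hypothesis}.\ref*{aneven}}), $x=(1,11)(2,12)$, and $y=(1,2,\ldots,10)(11,12)$, i.e.\ $a=2=1^y$ adjacent to $1$ and $b=12=(k+1)^y$ (forced since $n-k=2$). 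Then the partition into $\{1,3,5,7,9,11\}$ and $\{2,4,6,8,10,12\}$ is a block system for $\langle x,y\rangle$, so this $\langle x,y\rangle$ is imprimitive and proper. Thus the "crossing-block" divisibility argument cannot be completed with your positions, and adjusting the positions in $c_1$ instead would invalidate your non-crossing case (which relied on adjacency of $1$ and $a$). Since you yourself identify this uniform choice as "the crux" and leave it open, the proof is incomplete.

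For comparison, the paper closes precisely this gap by fixing the relative positions once and for all as $1^{y^2}=t$ and $(k+1)^y=r$ (note: distance $2$ in $c_1$, distance $1$ in $c_2$, the opposite emphasis from your sketch), and then arguing without any divisibility case split: for a non-singleton block $\Delta\ni 1$, one first finds a point of $\Delta\cap\Omega_1$ other than $1$; it is either fixed by $x$ or equals $t=1^{y^2}$, and in the latter case $\Delta^{y^2}=\Delta$ puts the $x$-fixed point $1^{y^4}$ into $\Delta$; either way $\Delta^x=\Delta$, so $k+1\in\Delta$, and then $\Delta^y$ contains $r$ and the $x$-fixed point $f=1^y$, whence $r^x\in\{t,1\}=\{f^y,f^{y^{-1}}\}$ forces $\Delta^y$ to be $y$-invariant and hence all of $\Omega$. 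This single argument handles all three shapes of $x$ (both $4$-cycles and $(1,k+1)(t,r)$) and all values of $n-k$, including $n-k=2$, which is exactly where your construction breaks down.
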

\begin{proof} 
Let $\supp{x} \cap \Omega_1 = \{1,t\}$ and $\supp{x} \cap \Omega_2 = \{k+1,r\}$. Then there are three possibilities for $x$, namely $(1, k+1, t, r), (1, k+1, r, t)$ or $(1, k+1)(t,r)$.

By Lemma \ref{oarity table}, elements of $\S{n}$ composed of two cycles lie in $\A{n}$ if and only if $G =\A{n}$, so there exists $y \in M$ such that
$$\ctm{y} =k \mid (n-k) ,$$
with $1^{y^2} = t $ and $(k+1)^y = r.$ Since $1^x = k+1$, it follows that $H= \langle x , y \rangle$ is transitive.

We prove that $H$ is primitive. Let $\Delta$ be a non-singleton block for $H$ containing 1. We shall show that there exists $b \in \Delta \cap \Theta_1$. Let $a \in \Delta \backslash \{1\}$. If $a \in \Theta_1$, then let $b:=a$. If $a \in \Theta_2$, then let $b:=1^{y^{(n-k)}}$. Since $k>n-k$, it follows that $b\neq 1$. From $a^{y^{(n-k)}} =a$ we deduce that $\Delta^{y^{(n-k)}} = \Delta$, hence $b \in \Delta \cap \Theta_1$. 

We claim that $\Delta^x = \Delta$ and so $k+1 \in \Delta$. If $b \in \fix{x}$, then this is immediate. If $b \notin \fix{x}$, then looking at $\supp{x}$ we deduce that $b =t = 1^{y^{2}}$. Hence $\Delta^{y^{2}} = \Delta$ and so $1^{y^4} \in \Delta$. Since $k \geq 7$, it follows that $1^{y^4} \neq 1, t$. Hence $1^{y^4} \in \fix{x}$ and so $\Delta^x = \Delta$.

The block $\Delta^y$ contains $r \in \supp{x}$ and $f :=1^y \in \fix{x}$. Therefore $(\Delta^y)^{x} = \Delta^y$ and $r^x \in \Delta^y$. Either $r^x=t = f^y$ or $r^x=1 = f^{y^{-1}}$. Hence either $\{f, f^y\}$ or $\{f, f^{y^{-1}}\} \subseteq \Delta^y$ hence $(\Delta^y)^y = \Delta^y$, and so $\Delta = \Omega$.

Therefore $H = \langle x, y \rangle$ is primitive. Furthermore, $H$ contains $x$, which is a Jordan element since $n \geq 12$. Therefore $\A{n} \leq H$ by Theorem \ref{jordanstyle} and so $H = G$.
\end{proof}
We now generalise to the case where both $|\Omega_1 \cap \supp{x}|$ and $|\Omega_2 \cap \supp{x}|$ are at least 2.
\begin{lemma}\label{4}
Let $n, G ,M$ and $x$ be as in Hypothesis \hyperref[]{\ref*{hypothesis}.\ref*{aneven}}. If $|\supp{x} \cap \Omega_1| \geq 2$ and $|\supp{x} \cap \Omega_2| \geq 2$, then there exists $y \in M$ such that $\langle x, y \rangle =G$.
\end{lemma}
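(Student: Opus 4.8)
The plan is to follow the pattern of Lemmas \ref{2} and \ref{1}: produce $y \in M$ whose cycle decomposition contains a cycle $c$ of prime length $\pb$ supported inside $\Omega_1$, prove that $H := \langle x, y\rangle$ is transitive and then primitive, and conclude with Theorem \ref{jordanstyle}. Since $\pb \leq k-2$, the cycle $c$ fixes at least $n-k+2 \geq 3$ points and so is a Jordan element (Definition \ref{Jor}), and $c \in \langle y\rangle \leq H$; hence once $H$ is primitive we obtain $\A{n} \leq H$, forcing $H = G$.

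First I would clear away the boundary cases. If $|\supp{x} \cap \Omega_1| = |\supp{x} \cap \Omega_2| = 2$ the claim is Lemma \ref{less than 4}, so I may assume one of these intersections has size at least $3$; in particular $n-k \geq 2$. By Lemma \ref{small8} I may further assume that either $\langle x\rangle$ already contains a Jordan element --- in which case it suffices to make $H$ primitive --- or that $|\supp{x}| \geq 8$ and $\mathcal{C}(x) \notin T$, so that Lemma \hyperref[]{\ref*{8pts}.\ref*{8pts2}} supplies spare moved points to place. Fix $t \in (\supp{x} \cap \Omega_1)\setminus\{1\}$ and $r \in (\supp{x} \cap \Omega_2)\setminus\{k+1\}$, and by Theorem \ref{BPC} a prime $\pb$ with $\tfrac{k}{2} < \pb < k-1$; then $5 \leq \pb \leq k-2$ and $\pb \nmid k$, so $\gcd(\pb, k-\pb)=1$.

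Next I would construct $y \in M$ with $\ctm{y} = (k-\pb)\cdot\pb \mid (n-k-1)\cdot 1$, the $\pb$-cycle $c$ lying inside $\Omega_1$, making the evident modification when $n-k = 2$ and, when $n-k-1 = \pb$, using cycles of lengths $\pb - 1$ and $2$ on $\Omega_2$ instead, so that in every case $\pb$ is coprime to all the other cycle lengths of $y$. The point $1$ is placed in the $(k-\pb)$-cycle $\Theta_1$, the point $k+1$ in the long cycle on $\Omega_2$, the point $t$ in $\supp{c}$, the point $r$ at the fixed point on $\Omega_2$, and $t^x$, $r^x$ are distributed among the remaining orbits of $y$; since $\supp{x}$ meets each of $\Omega_1$ and $\Omega_2$ in at least two points this can be arranged so that $x$ links every orbit of $\langle y\rangle$, whence $H$ is transitive. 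As $y$ has four cycles it is even exactly when $n$ is even, which is precisely the requirement for $y \in M$ when $G = \A{n}$; when $G = \S{n}$ (so $n$ is odd) one has $M = \S{k}\times\S{n-k}$ and there is no constraint. For primitivity, let $\mathcal{B}$ be a non-trivial block system; since $\pb$ is prime and coprime to the other cycle lengths of $y$, Lemma \ref{coprime cycle} gives a block $\Delta \supseteq \supp{c}$, and $\Delta$ is a union of $\langle y\rangle$-orbits with $\pb \leq |\Delta| < n$ and $|\Delta| \mid n$. Since $\pb > \tfrac{k}{2} > \tfrac{n}{4}$ and $|\supp{c}| + |\Theta_1| = k > \tfrac{n}{2}$, the divisor $|\Delta|$ is pinned down to a short list of near-regular possibilities --- essentially $|\Delta| = \pb$ with $n \in \{2\pb, 3\pb\}$, together with the closely related cases $|\Delta| = \pb+1$ --- exactly as in the proof of Lemma \ref{1}. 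For instance, if $n = 2\pb$ then $\mathcal{B} = \{\supp{c},\, \Omega\setminus\supp{c}\}$; here $y$ fixes $\supp{c}$ setwise, while $x$ fixes $\Omega\setminus\supp{c}$ setwise because $1, k+1 \in \Omega\setminus\supp{c}$ and $1^x = k+1$, so $H$ fixes $\supp{c}$, contradicting transitivity. The remaining configurations are eliminated by letting $y$ range over the family of admissible choices and using the freedom to relocate $t^x$, $r^x$ (and the further moved points from Lemma \ref{8pts}), just as the families $\mathcal{Y}_1$ and $\mathcal{Y}_a$ are used in Lemma \ref{1}.

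The main obstacle is this last primitivity step, and within it the elimination of the degenerate block systems with $n$ of the shape $2\pb$ or $3\pb$ (and $2(\pb+1)$, $3(\pb+1)$); I expect the bookkeeping to be somewhat lighter than in Lemma \ref{1}, since the extra moved point $r \in \Omega_2$ gives $x$ a second way to break a putative block.
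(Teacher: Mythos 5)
Your setup coincides with the paper's: the same family of elements $y$ with $\ctm{y} = (k-\pb)\pb \mid (n-k-1)\cdot 1$, the same placement of $1, t, k+1, r$, the same transitivity argument, Lemma \ref{coprime cycle} to trap the $\pb$-cycle in a block $\Delta$, and a Jordan element of $\langle y \rangle$ to finish via Theorem \ref{jordanstyle}. (Your preliminary appeals to Lemma \ref{small8} and Lemma \hyperref[]{\ref*{8pts}.\ref*{8pts2}} are not needed here: $|\supp{x}|>4$ already yields the two points $t$ and $r$, which is all the construction uses.) The genuine gap is the primitivity step, which you yourself flag as ``the main obstacle'' and then dispose of with ``the remaining configurations are eliminated by letting $y$ range over the family \dots just as in Lemma \ref{1}.'' That elimination is the actual content of the paper's proof, and it does not transfer verbatim from Lemma \ref{1}, because the cycle type here (four cycles, with the extra fixed point $r$) produces degenerate configurations with no analogue there: besides $\Delta=\Theta_2$ with $n\in\{2\pb,3\pb\}$, one must rule out $\Delta=\Theta_2\cup\{r\}$ (so $n=2(\pb+1)$ or $3(\pb+1)$), and your quick dismissal works only for $n=2\pb$. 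The paper's route is: show a block contains $\Theta_2$ even when $\pb=n-k-1$ (via Lemma \ref{comb}, since Lemma \ref{coprime cycle} is unavailable there), show $|\mathcal{B}|\in\{2,3\}$ and that $c_1^{\mathcal{B}}=c_3^{\mathcal{B}}$, and then derive a contradiction by noting $\Theta_2^x\subseteq\Theta_1\cup\Theta_3\cup\{r\}$, picking $s_1,s_2\in\Theta_2$ with images in the same $\Theta_i$ by pigeonhole ($\pb>5$), and choosing $y$ in the family with $s_1^{xy}=s_2^x$ so that $\Delta^x$ becomes $\langle x,y\rangle$-invariant. None of this appears in your proposal, so as written the proof is incomplete.

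Two further points on your deviation for the case $\pb=n-k-1$. Replacing the cycles on $\Omega_2$ by lengths $(\pb-1)$ and $2$ to restore coprimality is a reasonable idea (and would let you quote Lemma \ref{coprime cycle} directly where the paper argues by hand), but it loses the fixed point $r$, so you must recheck transitivity (if the $2$-cycle of $y$ were a $2$-cycle of $x$ its support could be unreachable; the freedom in choosing the $2$-cycle fixes this, but it needs saying) and you must redo the whole block analysis for this new cycle type, which you do not. Moreover the doubly degenerate case $n=3\pb$ (i.e.\ $\pb=n-k-1$ and $\pb=k-\pb+1$) is handled in the paper by switching to a different cycle type, $(\pb+1)(\pb-2)\mid \pb\cdot 1$, precisely because the $\mathcal{Y}_2$-type analysis breaks down there; your plan gives no indication of how that case is concluded. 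So the skeleton is right and matches the paper, but the proof of primitivity, which is the heart of the lemma, is missing.
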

\begin{proof}
By Lemma \ref{less than 4}, the result holds when $|\supp{x}|=4$, and so
we may assume that $|\supp{x}| > 4$. Hence there exist points $t \in \Omega_1 \backslash \{1\}$ and $r \in \Omega_2 \backslash \{k+1\}$ such that $t^x \neq r$. 

Let $\mathcal{Y}$ be the set of elements of $M$ composed of four cycles, $c_1$ and $c_2$ with support in $\Omega_1$, and $c_3$ and $c_4$ with support in $\Omega_2$, such that $1 \in \Theta_1, t \in \Theta_2$, $t^x \notin \Theta_2$, $k+1 \in \Theta_3$ and $\Theta_4 = \{r\}$. By Lemma \ref{oarity table}, elements of $\S{n}$ composed of four cycles lie in $\A{n}$ if and only if $G = \A{n}$, so $\mathcal{Y} \neq \emptyset$. For all $y \in \mathcal{Y}$, let $H =H(y)= \langle x, y\rangle$ and let $Y = \langle y \rangle$.

From $1^x = k+1$ we deduce that $\Theta_1, \Theta_3 \subseteq1^H$. Then $t \in \Theta_2$ and $t^x \in \Theta_1 \cup \Theta_3$ together imply that $\Omega \backslash \{r\} \subseteq1^H$. Since $r \in \supp{x}$, it follows that $H$ is transitive. Assume, by way of contradiction, that $H $ is imprimitive, and let $\mathcal{B}$ be a non-trivial block system for $H$.\\

Let $\pb$ be as in Theorem \ref{BPC}. We split into two cases. First assume that $\pb = n-k-1$ and $\pb= k-\pb+1$. Then $n = 3\pb$ and so it follows from Hypothesis \hyperref[]{\ref*{hypothesis}.\ref*{aneven}} that $G = \S{n}$. Let $$\mathcal{Y}_1 = \Big\{y \in \mathcal{Y} : \ctm{y} =  (\pb+1)(\pb-2) \mid \pb \cdot 1 \Big\}.$$
Then $\mathcal{Y}_1 \neq \emptyset$, and by Lemma \ref{coprime cycle}, there exists a block $\Delta \in \mathcal{B}$ with $\Theta_3 \subseteq \Delta$, so $|\Delta| \geq \pb$. Since $n = 3\pb$, it follows that $|\Delta| = \pb$ and $\Delta = \Theta_3$.
 Let $\Gamma$ be the block containing $r$, so $\Gamma^y = \Gamma$. Then $\Gamma$ is a union of some of the $\Theta_i$, a contradiction. Therefore for all $y \in \mathcal{Y}_1$, the group $H = \langle x, y \rangle$ is primitive. Furthermore, $H$ contains the Jordan element $y^{(\pb+1)(\pb-2)}$ and so $H=G$.\\

We may now assume that either 
\begin{equation}\label{?}
\pb \neq k-\pb+1 \;\;\;\; \text{or} \;\;\;\; \pb \neq n-k-1.
\end{equation} Let 
$$\mathcal{Y}_2 = \Big\{ y \in \mathcal{Y} : \ctm{y} = (k-\pb)  \pb \mid (n-k-1)  1 \Big\}.$$
Then $\mathcal{Y}_2 \neq \emptyset$.

We first show that there exists $\Delta \in \mathcal{B}$ with $ \Theta_2 \subseteq \Delta$. If $\pb \neq n-k-1$, then $\pb \nmid (n-k-1)$ by Lemma \ref{coprime}, and so this follows from Lemma \ref{coprime cycle}.
Suppose instead that $\pb = n-k-1$. If there exist blocks $\Delta_1, \ldots, \Delta_{\pb} \in \mathcal{B}$ such that $c_2^{\mathcal{B}} = (\Delta_1, \ldots, \Delta_{\pb})$, then $\Delta_i \cap \Theta_1 = \emptyset$ and $\Delta_i \cap \Theta_4 = \emptyset$ for $1 \leq i \leq \pb$ by Lemma \ref{comb}\ref{h1 cup h2}. Since $\mathcal{B}$ is non-trivial, it follows that $c_3^{\mathcal{B} }= (\Delta_1, \ldots, \Delta_{\pb})$ also, and so block size is two. Thus $|\Delta_1|=2$. Consider the block $\Gamma$ containing $r$. The point $r$ is fixed by $y$, so $\Gamma^y = \Gamma$, but $\Gamma \cap \Theta_1 \neq \emptyset$ so $|\Gamma| \geq k-\pb+1>2$, a contradiction. Hence $\Theta_2 \subseteq \Delta$ by Lemma \ref{comb}\ref{pcy}.

We show next that $c_1^{\mathcal{B}} = c_3^{\mathcal{B}}$.
From $|\Delta| \geq \pb > \frac{k}{2} > \frac{n}{4}$, it follows that $|\mathcal{B}|=2$ or $3$.
First suppose that $|\mathcal{B}|=2$, and let $\Gamma = \Omega \backslash \Delta$. Since $\Delta^y = \Delta$, it follows that $\Gamma^y = \Gamma$. If $\Theta_1 \subseteq \Delta$ or $\Theta_3 \subseteq \Delta$, then $|\Delta| > \frac{n}{2}$, and so $\Theta_1 \cup \Theta_3 \subseteq \Gamma$. Thus $1,k+1 \in \Gamma$ and $\Gamma^H = \Gamma$, a contradiction. We conclude that $|\mathcal{B}|=3$. If $\Delta$ contains a point of $\Theta_1$, then $\Theta_1 \cup \Theta_2 \subseteq\Delta$, a contradiction, so there exists a block $\Gamma \in \mathcal{B} \backslash \{\Delta\}$ containing a point of $\Theta_1$. 
Since $|\Theta_1| < |\Theta_2| \leq |\Delta|$, it follows that there exists a point $b \in \Gamma \backslash \Theta_1$. If $b\in \Theta_3,$ then $c_1^{\mathcal{B}} = c_3^{\mathcal{B}}$ by Lemma \ref{comb}\ref{sameinduced}. Hence assume for a contradiction that $b \notin \Theta_3$. It follows from $\Gamma \neq \Delta$ that $b \notin \Theta_2$. Hence $b = r$, so $\Gamma^y = \Gamma$. Therefore $\Gamma = \Theta_1 \cup \{r\}$, and the third block of $\mathcal{B}$ is $\Sigma = \Theta_3$. Since $|\Sigma| = |\Delta|,$ it follows that $\pb = n-k-1$. However, $|\Gamma| = k-\pb+1$, contradicting \eqref{?}.

If there exists $a \in \Delta$ such that $a^x \in \Delta,$ then $\Delta^H =\Delta,$ a contradiction. Therefore $\Theta_2^x \subseteq \Theta_1 \cup \Theta_3 \cup \{r\}$. By Theorem \ref{BPC}, $|\Theta_2| = \pb >5$. Hence there exist $s_1, s_2 \in \Theta_2$ such that either $s_1^x, s_2^x$ are both in $\Theta_1$ or both in $\Theta_3$. There exists $y \in \mathcal{Y}_2$ such that $s_1^{xy} = s_2^x$. Hence $(\Delta^{x})^y = \Delta^x$. Since $c_1^{\mathcal{B}} = c_3^{\mathcal{B}}$, it follows that $\Theta_1 \cup \Theta_3 \subseteq\Delta^x$. In particular, $\Delta^x$ contains 1 and $k+1$, and so $\Delta^{x^2} = \Delta^x = \Delta$. Hence $\Delta^H = \Delta$, a contradiction.

Hence for this $y$ the group $H = \langle x, y \rangle$ is primitive. If $\pb \neq n-k-1$, then $y^{(k-\pb)(n-k-1)}$ is a $\pb$-cycle and if $\pb = n-k-1$, then $y^{\pb}$ is a $(k-\pb)$-cycle. Hence in both cases $H = G$.
\end{proof}
We have reduced to the case of either $|\Omega_1 \cap \supp{x}|=1$ or $|\Omega_2 \cap \supp{x}|=1$. We first consider the case where $|\Omega_1 \cap \supp{x}|=1$.
\begin{lemma} \label{7}
Let $n, G ,M$ and $x$ be as in Hypothesis \hyperref[]{\ref*{hypothesis}.\ref*{aneven}}. If $|\supp{x} \cap \Omega_1| =1$, then there exists $y \in M$ such that $\langle x, y \rangle =G$.
\end{lemma}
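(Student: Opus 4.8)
The plan is to mirror the strategy used under Hypothesis \hyperref[]{\ref*{hypothesis}.\ref*{anodd}} in Lemmas \ref{3} and \ref{2}, but now carefully tracking parities since the roles of $\S{n}$ and $\A{n}$ are swapped: here $G = \A{n}$ when $n$ is even and $G = \S{n}$ when $n$ is odd. As before, I would first dispose of the easy cases using the preliminary lemmas. Since $|\supp{x} \cap \Omega_1| = 1$, we have $|\supp{x}| \leq n-k+1$. By Lemma \ref{n-k10} we may assume $n-k > 10$, and hence that $x$ is not a Jordan element, so in particular $|\supp{x}| \geq 2(\sqrt{n}-1)$, which forces $2(\sqrt{n}-1) \leq n-k+1$; thus a prime $\pnk$ as in Lemma \ref{pnk}(i) exists. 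By Lemma \ref{small8} we may further assume $|\supp{x}| \geq 8$ and $\mathcal{C}(x) \notin T$. Since $|\supp{x} \cap \Omega_2| \geq 2$ and $1$ is the unique point of $\supp{x}$ in $\Omega_1$, I would then apply Lemma \hyperref[]{\ref*{8pts}.\ref*{8pts2}} (with the roles of the two blocks adapted, noting $1^x = k+1 \in \Omega_2$) to extract distinct points $s, s^x, t, t^x, u, v \in \supp{x} \backslash \{1, k+1\}$ with $(u,v)$ not a cycle of $x$; all of these necessarily lie in $\Omega_2$.

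Next I would construct $y \in M$ of a suitable cycle type, splitting on whether $\pnk \mid (n-k)$, exactly as in Lemma \ref{2} but adjusting the number of trivial cycles in $\Omega_1$ versus $\Omega_2$ to land in the correct group. Concretely, since $|\supp{x} \cap \Omega_1| = 1$, the permutation $y$ should act as a single $k$-cycle on $\Omega_1$ (so that $\Theta_1 \subseteq 1^H$ immediately), and on $\Omega_2$ it should contain a $\pnk$-cycle together with fixed points placed so that the points $s, s^x, t, t^x$ are distributed to guarantee transitivity, while the total number of cycles $t$ of $y$ satisfies the parity condition of Lemma \ref{oarity table} that puts $y$ in $G$. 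When $\pnk \mid (n-k)$ I would use $\ctm{y} = k \mid \pnk\,(n-k-\pnk-2)\,1^2$ with $s,t,t^x \in \Theta_2$, $k+1, s^x \in \Theta_3$, $\Theta_4 = \{u\}$, $\Theta_5 = \{v\}$; when $\pnk \nmid (n-k)$ I would use $\ctm{y} = k \mid \pnk\,(n-k-\pnk)$ with $s,t,t^x \in \Theta_2$ and $k+1, s^x \in \Theta_3$. In each case the parity bookkeeping (number of even-length cycles) is what selects between the $n$ even and $n$ odd cases so that $y \in G$; this is routine via Lemma \ref{oarity table} but must be checked.

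For the transitivity and primitivity arguments, I would argue as follows. From $1 \in \Theta_1$ and $1^x = k+1 \in \Theta_3$ we get $\Theta_1, \Theta_3 \subseteq 1^H$; then $s \in \Theta_2$, $s^x \in \Theta_3$ gives $\Theta_2 \subseteq 1^H$; and since $(u,v)$ is not a cycle of $x$ while $\Omega \backslash \{u,v\} \subseteq 1^H$, the group $H = \langle x, y\rangle$ is transitive. For primitivity, let $\mathcal{B}$ be a non-singleton block system. Because $\pnk$ is a prime coprime to the lengths of the other cycles of $y$ restricted to $\Omega_2$ — here using that $\pnk \nmid (n-k-\pnk-2)$ when $\pnk \mid (n-k)$ (since $\pnk > 2$), and $\pnk \nmid (n-k-\pnk)$ when $\pnk \nmid (n-k)$ — Lemma \ref{coprime cycle} produces a block $\Delta$ with $\Theta_2 \subseteq \Delta$, so $\Delta^y = \Delta$; then from $t, t^x \in \Theta_2 \subseteq \Delta$ we get $\Delta^x \cap \Delta \neq \emptyset$ with $\Delta^x$ a block, forcing $\Delta^x = \Delta$ and hence $\Delta^H = \Delta$, so $\Delta = \Omega$. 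Thus $H$ is primitive and contains a $\pnk$-cycle, which is a Jordan element of type \ref{Jones} (it fixes $\geq k \geq 7 \geq 3$ points), so $\A{n} \leq H$ by Theorem \ref{jordanstyle} and $H = G$. The main obstacle I anticipate is the parity bookkeeping in choosing $y$: one must ensure that the prescribed cycle type on $\Omega_1 \cup \Omega_2$ simultaneously (a) has enough room — i.e. $n - k - \pnk - 2 \geq 0$ or $n - k - \pnk \geq 1$, which needs $\pnk < n-k-3$, exactly the bound in Lemma \ref{pnk}(i), modulo possibly handling small overlaps — and (b) lands in $G$ rather than the wrong one of $\S{n}, \A{n}$; if the natural choice has the wrong parity one must perturb it (e.g. split one cycle or merge two trivial cycles), and verifying that such a perturbation is always available without destroying the transitivity or coprimality features is the delicate point.
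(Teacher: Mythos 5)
There are two genuine gaps, and the first is the more serious. Your reduction ``by Lemma \ref{n-k10} we may assume $n-k>10$, and hence that $x$ is not a Jordan element'' is invalid: Lemma \ref{n-k10} carries the hypothesis that $\langle x\rangle$ contains \emph{no} Jordan element, so it cannot be invoked before that case is settled, and $n-k>10$ in no way prevents $x$ from being a Jordan element (consider $x=(1,k+1,k+2)$, or $x=(1,k+1)(k+2,k+3)$, or any cycle on $\{1\}\cup\Omega_2$ fixing at least three points). Likewise Lemma \ref{small8} only gives the dichotomy ``$\langle x\rangle$ contains a Jordan element or a generating $y$ exists'', so it too cannot be used to assume $|\supp{x}|\geq 8$ and $\mathcal{C}(x)\notin T$ until the Jordan case is in hand. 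For small-support Jordan elements the prime $\pnk$ of Lemma \ref{pnk}(i) is not even guaranteed (alternative (ii) may hold), so your later construction cannot absorb them. The paper spends the entire first half of its proof of Lemma \ref{7} on exactly this case: it takes $y$ with $\ctm{y}=k\mid(n-k)$, $(k+1)^y=t$ and $t^y=t^{x^{-1}}$, and runs a bespoke block-stabilisation argument to get primitivity, then uses $x$ itself as the Jordan element. Nothing in your proposal replaces this.

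The second gap is the parity of $y$, which you flag but do not resolve, and your concrete choices are in fact wrong under Hypothesis \hyperref[]{\ref*{hypothesis}.\ref*{aneven}}. Your cycle types $k\mid\pnk\,(n-k-\pnk-2)\,1^2$ and $k\mid\pnk\,(n-k-\pnk)$ have $5$ and $3$ cycles respectively, so by Lemma \ref{oarity table} they are odd permutations when $n$ is even (hence lie in no $y\in M\leq\A{n}=G$), and are even when $n$ is odd, in which case for an even $x$ your argument only yields $\A{n}\leq H$, not $H=\S{n}=G$. The paper's fix is precisely the non-obvious part of the Hypothesis~B adaptation: it uses a four-cycle type $k\mid\pnk\,(n-k-\pnk-i)\,i$ with $i\in\{1,2\}$ chosen according to whether $\pnk$ divides $n-k-1$, which simultaneously corrects the parity and preserves the coprimality needed for Lemma \ref{coprime cycle}, and it distributes the points $r,s,t$ of Lemma \hyperref[]{\ref*{8pts}.\ref*{8pts1}} (not part \ref{8pts2}) over $\Theta_2,\Theta_3,\Theta_4$ to keep transitivity. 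Your suggested perturbation (merging the two fixed points into a $2$-cycle) happens to work when $\pnk\mid(n-k)$, but in the branch $\pnk\nmid(n-k)$ the analogous repair needs $\pnk\nmid(n-k-1)$ or a further shift, which is exactly the divisibility bookkeeping you have left unchecked; as written the proposal does not establish the lemma.
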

\begin{proof}
First assume that $x$ is a Jordan element. It is immediate from Hypothesis \ref{hypothesis} that there exists $t \in \supp{x} \backslash \{1, k+1\}$, hence $t \in \Omega_2$. Let $s:=t^{x^{-1}}$. (Observe that we only define $k+1, (k+1)^y, (k+1)^{y^2}$ to be distinct when $|\supp{x} \cap \Omega_2| \geq 3$.)
By Lemma \ref{oarity table}, elements of $\S{n}$ composed of two cycles lie in $\A{n}$ if and only if $G =\A{n}$, so there exists $y \in M$ such that
$$\ctm{y} = k \mid (n-k),$$
with $(k+1)^y=t$, and if $s \neq k+1$, then $t^y= (k+1)^{y^2} =s$. Let $H = \langle x, y\rangle$. Since $1 \in \Theta_1$ and $k+1 \in \Theta_2$, it follows that $H$ is transitive.

Let $\mathcal{B}$ be a non-singleton block system for $H$, and let $\Delta \in \mathcal{B}$ with $1 \in \Delta$. It follows, just as in the proof of Lemma \ref{less than 4}, that there exists $b \in (\Delta \cap \Theta_1) \backslash \{1\}$. Since $\Theta_1 \cap \supp{x} = \{1\}$ and $|\Delta \cap \Theta_1| \geq 2$, it follows that $\Delta$ contains a point fixed by $x$, and so $\Delta^x = \Delta$. Therefore $k+1 = 1^x \in \Delta$ and $\{1^y, (k+1)^y \} = \{1^y, t\} \subseteq\Delta^y$. Since $1^y$ is fixed by $x$, it follows that $(\Delta^y)^{x^{-1}} = \Delta^y$, hence $s=t^{x^{-1}} \in \Delta^y$. From $t^y = s$ or $s^{y} =(k+1)^y = t$ we deduce that $\Delta^{y^2} = \Delta^y = \Delta$, and so $\Delta = \Delta^H = \Omega$.
Therefore $H$ is primitive. Furthermore, $H$ contains the Jordan element $x$, so $H = G$.\\

Hence we may assume that $x$ is not a Jordan element, and so $|\supp{x}| > 2(\sqrt{n}-1)$. By Lemma \ref{n-k10}, the result holds when $n-k \leq 10$, and so we may assume that $n-k >10$. Putting these two observations together, there exists a prime $\pnk$ as in Lemma \ref{pnk}. Furthermore, since the result holds when $x$ is a Jordan element, by Lemma \ref{small8} we may assume that $|\supp{x}| \geq 8$ and $\ct{x} \neq 1^{(n-8)} \cdot 2 \cdot 3^2,$ $1^{(n-8)} \cdot 3 \cdot 5$ or $1^{(n-9)} \cdot 3^3$. Hence let $ r,  s,  t$ be as in Lemma \hyperref[]{\ref*{8pts}.\ref*{8pts1}}.

If $\pnk \nmid (n-k-1)$, then let $i=1$, otherwise let $i=2$. Since $\pnk \leq n-k-4$, it follows that $n-k-\pnk-i \geq 2$. In addition, since $n-k \geq 11$, it follows that $n-k-i \geq 9$. Hence either $\pnk \geq 5$ or $n-k-\pnk-i \geq 5$. By Lemma \ref{oarity table}, elements of $\S{n}$ composed of four cycles lie in $\A{n}$ if and only if $G =\A{n}$, so there exists $y \in M$ such that 
$$\ctm{y}  = k \mid \pnk (n-k-\pnk-i)i,$$
with $r, t, t^x \in \Theta_2$, $k+1, r^x \in \Theta_3$, $s^x \in \Theta_4$, $s \in \Theta_2$ if $\pnk \geq 5$, and $s \in \Theta_3$ otherwise. Let $H = \langle x, y\rangle $. 
It is easy to see that $H$ is transitive. 

Let $\mathcal{B}$ be a non-singleton block system for $H$. By Lemma \ref{coprime cycle}, there exists $\Delta \in \mathcal{B}$ such that $\Theta_2 \subseteq \Delta$. Hence $\Delta^y = \Delta$. In addition, $\Delta$ contains $\{t, t^x\}$, so $\Delta^H = \Delta = \Omega$.
Hence $H$ is a primitive group containing the Jordan element $y^{k(n-k-\pnk-i)i}$, and so $H = G$.
\end{proof}
It remains to consider $|\supp{x} \cap \Omega_2|=1$. We first suppose that $x$ is a Jordan element.
\begin{lemma} \label{small in O1} Let $G, M, n$ and $x$ be as in Hypothesis \hyperref[]{\ref*{hypothesis}.\ref*{aneven}}. If $|\supp{x} \cap \Omega_2|=1$ and $x$ is a Jordan element, then there exists $y \in M$ such that $\langle x, y \rangle =G$.
\end{lemma}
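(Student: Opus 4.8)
The plan is to mirror the structure of Lemma~\ref{7}'s Jordan-element case, but with the roles of $\Omega_1$ and $\Omega_2$ interchanged, which forces some care because $\Omega_1$ is the \emph{larger} block. We are given $x \in G \setminus M$ with $1^x = k+1$, $x \neq (1,k+1)$, $|\supp{x} \cap \Omega_2| = 1$, and $x$ a Jordan element; since $|\supp{x}\cap\Omega_2|=1$ and $k+1\in\supp x$, in fact $\supp{x}\cap\Omega_2=\{k+1\}$. By Hypothesis~\ref{hypothesis} there is a point $t \in \supp{x} \setminus \{1,k+1\}$, and necessarily $t \in \Omega_1$. Set $s := t^{x^{-1}} \in \Omega_1$ (note $s \neq k+1$ since $t \neq 1$; and $s$ may or may not equal $1$). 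The idea is to pick $y \in M$ with $\ctm{y} = k \mid (n-k)$ — so $y$ consists of two cycles, one on each of $\Omega_1,\Omega_2$ — chosen so that the single-cycle action of $y$ on $\Omega_1$ drags the points $1, t, s$ (and a companion fixed-by-$x$ point) around in a pattern that forces any block containing $1$ to absorb $k+1$ and then all of $\Omega$. By Lemma~\ref{oarity table}, an element of $\S n$ with two cycles lies in $\A n$ iff $n$ is even iff (under Hypothesis~\ref{hypothesis}.\ref{aneven}) $G = \A n$, so such $y$ exists in $M$.

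Concretely I would require: $1^y = f$ for some point $f \in \Omega_1 \cap \fix x$ (such $f$ exists because $|\supp x \cap \Omega_1|$ is at most roughly $2(\sqrt n -1) < k - \text{(a few)}$, as $x$ is a Jordan element and $k > n/2$; one checks a genuinely bounded list of degenerate cases by \textsc{Magma} as in the earlier lemmas), together with a relation such as $t^y = s$ or $s^y = t$ placing $t$ and $s$ close together in the $\Omega_1$-cycle. First I would show $H := \langle x, y\rangle$ is transitive: $1 \in \Theta_1$, $k+1 \in \Theta_2$, and $1^x = k+1$ glue the two $y$-orbits, so $1^H = \Omega$. Then, for a non-singleton block $\Delta$ with $1 \in \Delta$: take $a \in \Delta \setminus \{1\}$; if $a \in \Omega_2$ then applying $y^{n-k}$ (which fixes $\Omega_2$ pointwise... no — $y^{n-k}$ fixes $\Omega_2$ setwise-cycle-wise, i.e.\ fixes each point of $\Omega_2$ and acts as a power on $\Omega_1$) one produces a second point of $\Delta \cap \Omega_1$, so WLOG $\Delta$ meets $\Omega_1$ in at least two points. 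Since $\supp x \cap \Omega_1$ has size strictly less than $k$ (indeed much less), $\Delta$ contains a point fixed by $x$, whence $\Delta^x = \Delta$ and $k+1 = 1^x \in \Delta$. Now chase $\Delta$ under $y$ and $x$ using the relations among $1, f, t, s$: $\Delta^y \ni 1^y = f \in \fix x$ and $\Delta^y \ni k+1$-image, etc., forcing $\Delta^y$ also $x$-invariant and then $y$-invariant, so $\Delta = \Omega$. Hence $H$ is primitive, contains the Jordan element $x$, and $\A n \leq H$ by Theorem~\ref{jordanstyle}, so $H = G$.

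The main obstacle is the bookkeeping of the block-chasing: because $\Omega_1$ is the big block of size $k > n/2$, I cannot argue "$|\Delta| \geq k+1$ forces $\Delta = \Omega$" as directly as in some earlier lemmas when the relevant cycle sat in the small part; instead the argument must route through the interaction of $x$ with the points $f = 1^y$, $t$, $s = t^{x^{-1}}$, and their $y$-translates, exactly as in the final paragraph of Lemma~\ref{less than 4} and Lemma~\ref{7}. I expect the correct choice of $y$ to need the case split on whether $s = 1$ (i.e.\ whether $x$ fixes... no, whether $1$ lies in the $x$-cycle through $t$ as its predecessor) and possibly on the length of that cycle, with a small finite residue of cycle types of $x$ (those with $|\supp x|$ small relative to $\sqrt n$, analogous to the list in Lemma~\ref{8pts} and the sets $T$, $S$ in Lemma~\ref{small8}) handled computationally. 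Apart from that, every ingredient — existence of $y$ with the prescribed cycle type and point-images via transitivity of the relevant symmetric groups, the parity check via Lemma~\ref{oarity table}, primitivity via a non-singleton block argument, and the final appeal to Theorem~\ref{jordanstyle} — is routine and parallels the preceding proofs.
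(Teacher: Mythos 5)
There is a genuine gap, and it comes from conflating ``Jordan element'' with ``element of small support.'' Definition \ref{Jor} has three types, and type (ii) --- a cycle fixing at least three points --- can have support as large as $n-3$. In the present situation $\supp{x}\subseteq \Omega_1\cup\{k+1\}$, and $x$ may perfectly well be a $(k+1)$-cycle on $\Omega_1\cup\{k+1\}$: it fixes the $n-k-1$ points of $\Omega_2\setminus\{k+1\}$, so it is a Jordan element whenever $n-k\geq 4$, yet $\Omega_1\cap\fix{x}=\emptyset$. Your construction requires $1^y=f$ with $f\in\Omega_1\cap\fix{x}$, so it simply cannot be carried out for this (unbounded, non-degenerate) family of $x$; your estimate ``$|\supp{x}\cap\Omega_1|$ is at most roughly $2(\sqrt n-1)$'' only covers Jordan elements of type (iii), and the residue is not a finite list to hand to \textsc{Magma}. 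A second, independent flaw is the step ``$\Delta$ meets $\Omega_1$ in at least two points, and $|\supp{x}\cap\Omega_1|<k$, hence $\Delta$ contains an $x$-fixed point'': a small block could lie entirely inside $\supp{x}\cap\Omega_1$. That inference is only valid in Lemmas \ref{less than 4} and \ref{7}, where $|\supp{x}\cap\Omega_1|=1$ forces one of any two points of $\Delta\cap\Omega_1$ to be fixed by $x$; here the roles are reversed and $\supp{x}\cap\Omega_1$ can even be all of $\Omega_1$.

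The paper's proof sidesteps both problems by not asking for any fixed point of $x$ in $\Omega_1$. It takes $y$ with $\ctm{y}=k\mid(n-k)$, $1^y=t$ and $t^y=t^x$ (when $t^x\neq 1$), where $t,t^x\in\Omega_1$, anchors the block argument at the block $\Delta$ containing $k+1$, proves $1\in\Delta$ via Lemma \ref{comb}\ref{sameinduced} together with $\supp{x}\cap\Omega_2=\{k+1\}$, and then uses the point $(k+1)^y$, which lies in $\Omega_2\setminus\{k+1\}\subseteq\fix{x}$ whenever $n-k\geq 2$ (the case $n-k=1$ is dispatched separately), to show $\Delta^y$ is $x$-invariant and hence $\Delta=\Omega$. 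So the $x$-fixed points exploited live in the small part $\Omega_2$, where they are guaranteed, rather than in $\Omega_1$, where they may not exist. If you rework your argument along these lines --- choosing $y$ to link $1$, $t$ and $t^x$ inside the $k$-cycle and chasing the block of $k+1$ --- the rest of your outline (transitivity, Lemma \ref{oarity table} for parity, Theorem \ref{jordanstyle} at the end) goes through as you describe.
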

\begin{proof}
It is immediate from Hypothesis \ref{hypothesis} that there exists $t \in \supp{x} \backslash \{1,k+1\}$. Our assumptions that $|\supp{x} \cap \Omega_2|=1$ and $1^x = k+1$ imply that $t, t^x \in \Omega_1$.

By Lemma \ref{oarity table}, elements of $\S{n}$ composed of two cycles lie in $\A{n}$ if and only if $G =\A{n}$, so there exists $y \in M$ such that
$$\ctm{y }  = k \mid (n-k),$$
with $1^y = t$, and $t^y = t^x$ if $t^x \neq 1$. It is clear that $H = \langle x, y \rangle$ is transitive. 

We assume, by way of contradiction, that $H$ is imprimitive, and let $\mathcal{B}$ be a non-singleton block system for $H$. Let $\Delta \in \mathcal{B}$ be the block containing $k+1$.
If $n-k=1$, then $\Delta^y = \Delta$, and so for $a \in \Delta \backslash \{k+1\}$ we find that $a^Y \cup \{k+1\} = \Omega = \Delta$, and so $H$ is primitive. Hence we assume now that $n-k \geq 2$.

We claim that $1 \in \Delta$. To see this, let $\Gamma \in \mathcal{B}$ be the block containing 1. If $\Gamma \cap \fix{x} \neq \emptyset$, then $k+1 = 1^x \in \Gamma$, hence $\Gamma=\Delta$. Similarly, if $\Delta \cap \fix{x} \neq \emptyset$, then $\Delta = \Gamma$. Hence we may assume that $\Delta, \Gamma \subseteq \supp{x}$. Since $|\Omega_2 \cap \supp{x}|=1$, it follows that $\Delta$ and $\Gamma$ both contain points of $\Theta_1$. Since $\Delta$ contains a point of $\Theta_2$, we deduce from Lemma \ref{comb}\ref{sameinduced} that $c_1^{\mathcal{B}} = c_2^{\mathcal{B}}$. However $|\Omega_2 \cap \supp{x}| =1$, so $\Delta = \Gamma$ and $1 \in \Delta$.

Notice that the block $\Delta^y$ contains $1^y = t$ and $(k+1)^y \in \fix{x}$. Hence $(\Delta^{y})^x = \Delta^y$ and in particular $\Delta^y$ contains both $t$ and $t^x$. If $t^x =1$, then $\Delta^y = \Delta$. If $t^x \neq 1$, then $\{t, t^x\} = \{t, t^y\} \subseteq \Delta^y = \Delta^{y^2} = \Delta$. Therefore in both cases $\Delta = \Delta^H = \Omega$.
Hence $H$ is primitive and contains the Jordan element $x$, and so $H = G$.
\end{proof}
Finally, we generalise to the case $|\supp{x} \cap \Omega_2| =1$.
\begin{lemma} \label{8ish} Let $n, G ,M$ and $x$ be as in Hypothesis \hyperref[]{\ref*{hypothesis}.\ref*{aneven}}. If $|\supp{x} \cap \Omega_2| = 1$, then there exists $y \in M$ such that $\langle x, y \rangle =G$.
\end{lemma}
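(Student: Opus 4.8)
The plan is to follow the pattern of the preceding lemmas (especially Lemmas~\ref{small in O1} and \ref{7}): reduce the problem by the auxiliary results already proved, then exhibit a suitable $y \in M$ with a controlled cycle type so that $H = \langle x, y \rangle$ is transitive, then prove $H$ is primitive by analysing non-singleton block systems, and finally conclude $\A{n} \leq H$ via Theorem~\ref{jordanstyle} by locating a Jordan element in $\langle x \rangle$ or $\langle y \rangle$. First I would dispatch the case where $x$ is a Jordan element by invoking Lemma~\ref{small in O1}. So from now on $x$ is not a Jordan element, hence $|\supp{x}| > 2(\sqrt{n}-1)$; since $|\supp{x} \cap \Omega_2| = 1$ we get $|\supp{x} \cap \Omega_1| = |\supp{x}| - 1 > 2(\sqrt{n}-1) - 1$, which in particular forces $k$ to be reasonably large relative to $n$. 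I would also apply Lemma~\ref{small8} to assume $|\supp{x}| \geq 8$ and that $\ct{x}$ avoids the short exceptional list, so that Lemma~\hyperref[]{\ref*{8pts}.\ref*{8pts1}} (or \ref{8pts2}) applies \emph{inside $\Omega_1$}: there are distinct points $r, r^x, s, s^x, t, t^x \in (\supp{x} \cap \Omega_1) \setminus \{1\}$, noting $k+1 \notin \Omega_1$ so the exclusion of $k+1$ is automatic. (A small point to check: Lemma~\ref{8pts} is stated for points in $\supp{x}\setminus\{1,k+1\}$; here everything relevant lies in $\Omega_1$ and $k+1\in\Omega_2$, so the hypotheses transfer, though one must confirm $|\supp{x}\cap\Omega_1|\geq 8$, which follows from $|\supp{x}|\geq 8$ and $|\supp{x}\cap\Omega_2|=1$ provided $|\supp{x}|\geq 9$; the boundary case $|\supp{x}|=8$ with the single point in $\Omega_2$ needs a separate MAGMA-style check or a direct argument.)

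Next I would pick a prime $\pk$ via Lemma~\ref{pk}: since $n-k \leq k-1$ and we may assume $k \geq 10$ (the cases $k < 10$, equivalently small $n$, being covered by Lemma~\ref{coded} together with Lemmas~\ref{n-k10} and \ref{small8}), there is an odd prime $\pk \leq k-5$ with $\pk \nmid (n-k)$. The idea is to place a $\pk$-cycle of $y$ inside $\Omega_1$ together with enough of the $x$-orbit data to force transitivity and primitivity. Concretely I would take $y \in M$ with
$$\ctm{y} = \pk \cdot (k - \pk - 2) \cdot 1^2 \mid (n-k),$$
arranging $s, t, t^x \in \Theta_1$ (the $\pk$-cycle), $1, s^x \in \Theta_2$ (the big $\Omega_1$-cycle), $\Theta_3 = \{r\}$, $\Theta_4 = \{r^x\}$, and $k+1 \in \Theta_5$ (the $\Omega_2$-cycle). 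Lemma~\ref{oarity table} ensures $\mathcal{Y} \neq \emptyset$ with the right parity: $y$ has $5$ cycles, so $y \in \A{n}$ iff $n$ is odd, which under Hypothesis~\hyperref[]{\ref*{hypothesis}.\ref*{aneven}} matches $G = \A{n}$. For transitivity: $1^x = k+1$ links $\Theta_2$ and $\Theta_5$; $s^x$ links $\Theta_1$ and $\Theta_2$; $r, r^x$ link $\Theta_3, \Theta_4$ to the rest; so $H = \langle x, y\rangle$ is transitive. (I may need to tune the exact placement of the fixed points $r, r^x$ and the parity bookkeeping; this is routine once the skeleton is fixed.)

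For primitivity, let $\mathcal{B}$ be a non-singleton block system. Since $\pk$ is prime, $\pk \nmid (k-\pk-2)$ (as $\pk \nmid k$ by choice via Lemma~\ref{pk}, modulo checking $\pk \nmid 2$, true since $\pk$ odd), and $\pk \nmid (n-k)$, the $\pk$-cycle of $y$ is coprime to all its other cycle lengths, so Lemma~\ref{coprime cycle} gives a block $\Delta$ with $\supp{(\text{the }\pk\text{-cycle})} \subseteq \Delta$, i.e. $\{s, t, t^x\} \subseteq \Delta$ and $\Delta^y = \Delta$. From $t, t^x \in \Delta$ we get $\Delta^x \cap \Delta \neq \emptyset$, hence by the usual argument (block either fixed or moved disjointly, combined with $\Delta^y = \Delta$) one forces $\Delta^x = \Delta$, and then transitivity of $H$ pushes $\Delta = \Omega$. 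Thus $H$ is primitive, and $H$ contains the Jordan element $y^{(k-\pk-2)(n-k)}$, a $\pk$-cycle fixing at least three points (since $\pk \leq k - 5 < n - 3$). Theorem~\ref{jordanstyle} then gives $\A{n} \leq H$, and the parity of $y$ pins down $H = G$. I expect the main obstacle to be the primitivity step in the degenerate configurations --- when $n$ is small relative to $k$ so that $\mathcal{B}$ can only have $2$ or $3$ blocks, the ``$\Delta^x = \Delta$'' deduction may fail and one must, as in the proof of Lemma~\ref{1}, pass to a subfamily $\mathcal{Y}_1 \subseteq \mathcal{Y}$ (choosing where $x$ sends a second point of $\Delta$) to derive a contradiction; handling all such block-count cases cleanly, plus the handful of genuinely small $(n,k)$ left over, is where the real work lies. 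Finally one observes that Lemmas~\ref{4}, \ref{7}, \ref{8ish}, together with \ref{1}, \ref{2}, \ref{3} under Hypothesis~\hyperref[]{\ref*{hypothesis}.\ref*{anodd}}, exhaust all $x \in G \setminus M$ with $1^x = k+1$ and $x \neq (1,k+1)$, which by Proposition~\ref{1 to k+1} completes the proof of Theorem~\ref{newmain} for $n \geq 12$; combined with Lemma~\ref{coded} this finishes Theorems~\ref{newmain} and \ref{mainsize}(i), the transposition case $x = (1,k+1)$ being exactly the source of the $\gcd(n,k) > 1$ exceptions recorded there.
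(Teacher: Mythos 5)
Your overall strategy is the same as the paper's (reduce via Lemmas \ref{small in O1} and \ref{small8}, take points from Lemma \ref{8pts}(i), build $y$ around a $\pk$-cycle from Lemma \ref{pk}, prove transitivity, get primitivity from Lemma \ref{coprime cycle}, and finish with Theorem \ref{jordanstyle}), but three steps of your construction genuinely fail. First, the parity is backwards: your $y$ has five cycles, so by Lemma \ref{oarity table} it is even exactly when $n$ is odd, whereas in case (B) of Hypothesis \ref{hypothesis} we have $G=\A{n}$ exactly when $n$ is even; thus your family of candidate $y$ is empty (not contained in $M$) precisely in the alternating case. The paper uses a four-cycle shape $(k-i-\pk)\cdot\pk\cdot i \mid (n-k)$ for this reason. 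Second, your appeal to Lemma \ref{coprime cycle} (and your claim that $y^{(k-\pk-2)(n-k)}$ is a $\pk$-cycle) needs $\pk\nmid(k-\pk-2)$, i.e.\ $\pk\nmid(k-2)$. Lemma \ref{pk} only gives $\pk\nmid(n-k)$; it does not assert $\pk\nmid k$, and even that would not yield $\pk\nmid(k-2)$ (e.g.\ $k=12$, $\pk=5$ gives $k-\pk-2=5$). The paper's choice of $i\in\{1,2\}$ with $\pk\nmid(k-i)$ exists exactly to secure this coprimality. Third, transitivity can fail: you park $r$ and $r^x$ as singleton fixed points of $y$, but Lemma \ref{8pts}(i) does not exclude $(r,r^x)$ being a $2$-cycle of $x$ (that guarantee is the content of part (ii), for the points $u,v$), and in that case $\{r,r^x\}$ is invariant under both $x$ and $y$. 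The paper avoids this by placing $r$ in the long $\Omega_1$-cycle, $r^x$ in the $\pk$-cycle, and only $s^x$ in the short cycle.

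Your reduction of the small cases is also not right: when $k\leq 9$ we have $12\leq n\leq 17$, so Lemma \ref{coded} (which needs $n\leq 11$) does not apply, and Lemma \ref{n-k10} requires $|\supp{x}\cap\Omega_1|=1$, the opposite of the present situation; the paper disposes of $7\leq k\leq 9$ by a direct \textsc{Magma} check, which is feasible because $|\supp{x}|\leq k+1\leq 10$. On the other hand, the block-counting difficulties you anticipate at the end do not arise here: once the construction is corrected as above, the primitivity argument is exactly the short one you sketch ($\Theta$ of the $\pk$-cycle inside a block $\Delta$ with $t,t^x\in\Delta$ forces $\Delta=\Omega$). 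The three defects above are the real gaps, and repairing them essentially forces you back to the paper's choice of $y$.
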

\begin{proof} 
First assume that $k \geq10$, so there exists a prime $\pk$ as in Lemma \ref{pk}.
If $x$ is a Jordan element, then the result holds by Lemma \ref{small in O1}. Hence by Lemma \ref{small8} the result holds if $|\supp{x}| < 8$ or $\ct{x} = 1^{(n-8)} \cdot 2 \cdot 3^2, 1^{(n-8)} \cdot 3 \cdot 5$ or $1^{(n-9)} \cdot 3^3$, so assume otherwise. Thus there exist $ r,   s, t\in \supp{x} $ as in Lemma \hyperref[]{\ref*{8pts}.\ref*{8pts1}}. 

Let $i=1$ if $\pk \nmid (k-1)$ and $i=2$ otherwise. Then $k -i- \pk \geq 3$. By Lemma \ref{oarity table}, elements of $\S{n}$ composed of four cycles lie in $\A{n}$ if and only if $G =\A{n}$, so there exists $y \in M$ such that
$$\ctm{y } =(k-i-\pk)  \pk  i \mid (n-k),$$
with $1 , r,s \in \Theta_1,$ $r^x, t, t^x \in \Theta_2$ and $ s^x \in \Theta_3$. Let $H = \langle x, y\rangle $. Then it is easy to check that $H$ is transitive.

Let $\mathcal{B}$ be a non-singleton block system for $H$. By Lemma \ref{coprime cycle}, there exists $\Delta \in \mathcal{B}$ such that $\Theta_2 \subseteq\Delta$, hence $\Delta^y = \Delta$. In addition, $t, t^x \in \Delta$, and so $\Delta^x = \Delta = \Omega$, and hence $H$ is primitive. Furthermore, $H$ contains the $\pk$-cycle $y^{(k-i-\pk)i(n-k)}$ and so $H = G$.\\

Now suppose that $k \leq 9$. It is immediate from Hypothesis \ref{hypothesis} that $7 \leq k \leq 9$ and so $12 \leq n \leq 17$. From $|\supp{x} \cap \Omega_2|=1$, it follows that $|\supp{x}| \leq k+1 \leq 10$. 
We verify in $\textsc{Magma}$ that for each $x$ there exists $y \in M$ such that $\langle x, y \rangle =G$.
\end{proof}
\begin{lemma} \label{caseb} Let $n, G, M$ and $x$ be as in Hypothesis \hyperref[]{\ref*{hypothesis}.\ref*{aneven}}. Then there exists $y \in M$ such that $\langle x, y \rangle =G$.
\end{lemma}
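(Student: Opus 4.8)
The plan is to prove Lemma \ref{caseb} by simply combining the three cases already treated in this subsection, according to the sizes of the intersections of $\supp{x}$ with the two orbits $\Omega_1$ and $\Omega_2$. Recall from Notation \ref{not} that $x \in G \backslash M$, and under Hypothesis \hyperref[]{\ref*{hypothesis}.\ref*{aneven}} we additionally have $n \geq 12$, $1^x = k+1$ and $x \neq (1,k+1)$.

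First I would record that since $1 \in \Omega_1$ and $1^x = k+1 \in \Omega_2$, the point $1$ lies in $\supp{x} \cap \Omega_1$ and the point $k+1$ lies in $\supp{x} \cap \Omega_2$; hence both intersections are non-empty, so $|\supp{x} \cap \Omega_1| \geq 1$ and $|\supp{x} \cap \Omega_2| \geq 1$. I would then split into three exhaustive cases. If $|\supp{x} \cap \Omega_1| \geq 2$ and $|\supp{x} \cap \Omega_2| \geq 2$, then the conclusion follows from Lemma \ref{4}. If $|\supp{x} \cap \Omega_1| = 1$, then the conclusion follows from Lemma \ref{7}. If $|\supp{x} \cap \Omega_2| = 1$, then the conclusion follows from Lemma \ref{8ish}. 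These three cases clearly cover all possibilities, since if neither intersection has size $1$ then, being non-empty, both have size at least $2$.

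Since each of the three invoked lemmas already assumes exactly the setup of Hypothesis \hyperref[]{\ref*{hypothesis}.\ref*{aneven}} and produces the required $y \in M$ with $\langle x, y\rangle = G$, there is essentially no further work to do. I do not expect any obstacle here: the lemma is a bookkeeping statement assembling the case analysis of Lemmas \ref{4}, \ref{7} and \ref{8ish}. The only point needing a moment's care is the verification that the case division is exhaustive, which is immediate from the observation above that both $\supp{x}\cap\Omega_1$ and $\supp{x}\cap\Omega_2$ are non-empty.

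\begin{proof}
Since $1 \in \Omega_1 \cap \supp{x}$ and $k+1 = 1^x \in \Omega_2 \cap \supp{x}$, both $\supp{x} \cap \Omega_1$ and $\supp{x} \cap \Omega_2$ are non-empty. If $|\supp{x} \cap \Omega_1| = 1$, then the result follows from Lemma \ref{7}. If $|\supp{x} \cap \Omega_2| = 1$, then the result follows from Lemma \ref{8ish}. Otherwise $|\supp{x} \cap \Omega_1| \geq 2$ and $|\supp{x} \cap \Omega_2| \geq 2$, and the result follows from Lemma \ref{4}.
\end{proof}
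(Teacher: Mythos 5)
Your proof is correct and is essentially identical to the paper's: both split according to whether $|\supp{x} \cap \Omega_1| = 1$, $|\supp{x} \cap \Omega_2| = 1$, or both intersections have size at least $2$, and invoke Lemmas \ref{7}, \ref{8ish} and \ref{4} respectively. Your extra remark that both intersections are non-empty (since $1$ and $k+1 = 1^x$ lie in them) is a harmless and slightly more careful justification that the case division is exhaustive.
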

\begin{proof}
If $|\supp{x} \cap \Omega_1|=1$ or $|\supp{x} \cap 
\Omega_2|=1$, then the result holds by Lemma \ref{7} and \ref{8ish}, respectively. Otherwise, $|\supp{x} \cap \Omega_i| \geq 2$ for $i \in \{1,2\}$, so the result holds by Lemma \ref{4}.
\end{proof}
\subsection{Completing the proof of Theorems \ref{newmain} and \ref{mainsize}}
In Lemmas \ref{1} and \ref{caseb} we prove that if $n \geq 12$ and $x \in G \backslash M$ is not a transposition, then there exists $y \in M$ such that $\langle x, y \rangle =G$. Here we show that if $x \in G \backslash M$ is a transposition, then there exists $y \in M$ such that $\langle x, y \rangle =G$ if and only if $\gcd(n,k)=1$, completing the proof of Theorem \ref{newmain}. We also complete the proof of Theorem \ref{mainsize}.
\begin{theorem} \label{1, k+1, coprime} Let $n,k,G = \S{n}$ and $M$ be as in Notation \ref{not}, and let $x  \in G \backslash M$ be a transposition. Then there exists $y \in M$ such that $\langle x, y \rangle =G$ if and only if $\gcd(n, k)=1$.
\end{theorem}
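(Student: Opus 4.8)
The plan is to reduce everything to the classical question of when an $n$-cycle and a transposition generate $\S{n}$. Since $M\cong\S{\Omega_1}\times\S{\Omega_2}$ acts transitively on $\Omega_1\times\Omega_2$, and since a transposition outside $M$ must interchange a point of $\Omega_1$ with a point of $\Omega_2$, after replacing $x$ by a suitable $M$-conjugate I may assume $x=(1,k+1)$. Put $\rho=(1,2,\dots,n)$ and $y_0:=(1,2,\dots,k)(k+1,\dots,n)\in M$. A direct check gives $y_0 x=\rho$, so $\langle x,y_0\rangle=\langle x,\rho\rangle=\langle (1,2,\dots,n),\,(1,k+1)\rangle$; more generally $x^{\rho^{\,j}}=(1+j,\,k+1+j)$ (indices read mod $n$), the transposition joining two points at ``distance'' $k$ around the cycle $\rho$.

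For the direction $\gcd(n,k)=1\Rightarrow$ some $y$ works, I take $y=y_0$. Then $H:=\langle x,y_0\rangle$ contains $\rho$ together with all the transpositions $(1+j,k+1+j)$, whose ``edge graph'' on $\Omega$ is the Cayley graph of $\mathbb{Z}/n$ with connection set $\{\pm k\}$; this is a single $n$-cycle precisely because $\gcd(n,k)=1$. A connected set of transpositions generates the full symmetric group, so $H=\S{n}=G$.

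For the converse, suppose $d:=\gcd(n,k)\ge 2$; I must show $\langle x,y\rangle\ne G$ for every $y\in M$. Write $y=y_1y_2$ with $y_i\in\S{\Omega_i}$. Since $y$ preserves $\{\Omega_1,\Omega_2\}$, the points $1$ and $k+1$ lie in distinct $\langle y\rangle$-orbits, so a subset of $\Omega$ is $\langle x,y\rangle$-invariant exactly when it is a union of $\langle y\rangle$-orbits containing either both or neither of $1,k+1$; hence the $\langle x,y\rangle$-orbits are the $\langle y\rangle$-orbits of $1$ and of $k+1$ fused into one, plus every other $\langle y\rangle$-orbit unchanged. If $\langle x,y\rangle$ is intransitive we are done; otherwise $y_1$ is a $k$-cycle and $y_2$ an $(n-k)$-cycle, and conjugating $y$ by a suitable element of $\S{\Omega_1\setminus\{1\}}\times\S{\Omega_2\setminus\{k+1\}}$ (which centralises $x$, hence does not affect whether $x,y$ generate $G$) I may assume $y=y_0$. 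Now $d\mid n$ and $d\mid k$, so $d\mid(n-k)$; let $\mathcal{B}=\{B_0,\dots,B_{d-1}\}$ with $B_i=\{j:1\le j\le n,\ j\equiv i\ (\mathrm{mod}\ d)\}$, a partition into $d$ blocks of size $n/d$, nontrivial since $2\le d\le k<n$. Because $d\mid k$ we have $1\equiv k+1\pmod d$, so $x=(1,k+1)$ fixes every block; and $y_0$ shifts residues uniformly by $+1\pmod d$ (the wrap-around values $k\mapsto 1$ and $n\mapsto k+1$ behave correctly exactly because $d\mid k$ and $d\mid n$), so $y_0$ permutes $\mathcal{B}$ cyclically. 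Thus $\langle x,y_0\rangle$ preserves the nontrivial block system $\mathcal{B}$, so it is imprimitive and $\langle x,y\rangle\ne G$.

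I expect the converse to be the delicate part, since one cannot simply exhibit a single bad $y$: the content is in (i) the orbit argument, which cuts the transitive case down to $y$ being a product of a full cycle on each side; (ii) the observation that every such $y$ is $C_G(x)$-conjugate to the canonical $y_0$; and (iii) writing down the explicit mod-$d$ block system, which exists precisely because $\gcd(n,k)=d$ forces $d\mid k$ and $d\mid(n-k)$. The argument is uniform in $n$, although for $n\le 11$ one could instead cite Lemma \ref{coded}.
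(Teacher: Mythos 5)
Your proof is correct, and the forward direction takes a genuinely different, more elementary route than the paper's. For $\gcd(n,k)=1$ the paper also works with a $y$ of cycle type $k \mid (n-k)$, but it then proves that $\langle x,y\rangle$ is primitive by a direct block argument (exploiting that $y^{\,n-k}$ still sweeps out all of $\Omega_1$ exactly when $\gcd(n,k)=1$) and finishes by invoking Theorem \ref{jordanstyle}, since a primitive group containing a transposition must contain $\A{n}$. You instead observe $y_0x=(1,2,\ldots,n)$, reducing to the classical question of an $n$-cycle together with the transposition $(1,k+1)$, and conclude via connectivity of the transposition graph (the circulant on $\mathbb{Z}/n$ with connection set $\{\pm k\}$); this bypasses primitivity and the Jordan-type theorem entirely and is uniform in $n$, whereas the paper's primitivity-plus-Jordan template has the advantage of being the same machinery reused throughout Sections 3--4. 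Your converse is essentially the paper's argument: both use the orbit structure of $\langle x,y\rangle$ to force $y$ to be a $k$-cycle times an $(n-k)$-cycle, then exhibit an $\langle x,y\rangle$-invariant nontrivial block system. Your additional step of conjugating $y$ to $y_0$ by an element of $C_G(x)\cap M$ lets you describe the blocks simply as congruence classes modulo $d$, and your verification (using $d\mid k$ and $d\mid n$) is complete; this is in fact cleaner than the paper's displayed block $\Delta=1^{\langle y^{k/t}\rangle}\cup(k+1)^{\langle y^{(n-k)/t}\rangle}$, whose exponents and the claim $|\Delta|=t$ appear to be misprinted --- the intended system is precisely the one you wrote down, with $t$ blocks of size $n/t$.
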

\begin{proof} By Proposition \ref{1 to k+1}, it suffices to consider $x = (1, k+1)$.

First assume that $\gcd(n,k)=1$. Let $y \in M$ with $\ctm{y}=k \mid (n-k)$, and let $H = \langle x, y \rangle$. It is clear that $H$ is transitive. Let $\mathcal{B}$ be a non-singleton block system for $H$, let $\Delta \in \mathcal{B}$ with $1 \in \Delta$, and let $a \in \Delta \backslash \{1\}$.
If $a \in \Omega_1$, then $a^x = a$ and so $\Delta^x = \Delta$. Hence $k+1 = 1^x \in \Delta$. Therefore, without loss of generality, $a \in \Omega_2$. Thus $a^{y^{(n-k)}} = a$, and so $\Delta^{y^{(n-k)}} = \Delta$. Therefore $1^{\langle y^{(n-k)} \rangle } \subseteq \Delta$. It follows from $\gcd(n, k)=1$ that $1^{\langle y^{(n-k)} \rangle } = \Omega_1$. Hence $|\Delta| \geq k +1> \frac{n}{2}$, so $\Delta=\Omega$. 
Hence $H$ is primitive, and contains the Jordan element $x$. Since $x \in \S{n} \backslash \A{n}$, it follows that $H = \S{n}$.\\

Next assume that $\gcd(n, k)=t>1$. Let $y \in M$ be such that $\langle x, y \rangle$ is transitive. Then $\ctm{y}=k \mid (n-k)$. We claim that the set of translates of 
$\Delta = 1^{\langle y^{\frac{k}{t}} \rangle} \cup (k+1)^{\langle y^{\frac{n-k}{t}} \rangle}$ form a proper non-trivial block system for $\langle x, y \rangle$, so that $\langle x, y \rangle \neq \S{n}$. To see this, notice that $|\Delta| = t >1$. Also, note that $\dot{\bigcup}_{i=0}^{t-1} \Delta^{y^i} = \Omega$ and $x$ fixes setwise $\Delta^{y^i}$ for $0 \leq i \leq t-1$.
\end{proof}
\vspace{0.3cm}
\noindent
\textit{Proof of Theorem \ref{newmain}.}
The subgroup $M$ is a maximal coclique in $\Gamma(G)$ if and only if for all $x \in G \backslash M$ there exists $y \in M$ such that $\langle x, y \rangle =G$, so let $x \in G \backslash M$. Then by Proposition \ref{1 to k+1} we may assume without loss of generality that $1^{x} = k+1$. 

If $n \leq 11$, then the result holds by Lemma \ref{coded}, so assume that $n \geq 12$. If Hypothesis \hyperref[]{\ref*{hypothesis}.\ref*{anodd}} holds, then the result follows from Lemma \ref{1}, and if Hypothesis \hyperref[]{\ref*{hypothesis}.\ref*{aneven}} holds, then the result follows from Lemma \ref{caseb}. If neither part of Hypothesis \ref{hypothesis} holds, then $x = (1,k+1)$, so the result follows from Theorem \ref{1, k+1, coprime}. \hfill \qed 
\vspace{0.3cm}\\
\noindent
\textit{Proof of Theorem \ref{mainsize}.}
Parts (i)(b), (ii)(a) and (ii)(b) follow immediately from Lemma \ref{coded}.
It remains to prove (i)(a), so let $G = \S{n}$ and $\gcd(n,k)>1$.

Let $C$ be a maximal coclique in $\Gamma(G)$ containing $M$. Theorem \ref{newmain} proves that $C \neq M \backslash \{1\}$. Lemmas \ref{1} and \ref{caseb} show that if $x \in G \backslash M$ is not a transposition, then $x \notin C$. Hence $M \backslash \{1\} \subsetneq C \subseteq M \cup (1, k+1)^M \backslash \{1\}$. By Theorem \ref{1, k+1, coprime}, for all $y,m \in M$, the group $\langle y, (1,k+1)^m \rangle $ is not equal to $G$. For $n > 3$ no two transpositions generate $G$ so $M \cup (1, k+1)^M  \backslash \{1\} \subseteq C$. Therefore $C = M \cup (1, k+1)^M \backslash \{1\}$, as required. \hfill \qed
\section{Proof of Theorem \ref{AGLstuff}}
The methods here are different to those in Section 3, because the maximal subgroups of $\S{p}$ and $\A{p}$ are classified.
We first consider an exceptional case.
\begin{lemma} \label{M23} The group $M_{23}$ is a maximal coclique in $\A{23}$.
\end{lemma}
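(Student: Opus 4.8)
\textbf{Proposal for the proof of Lemma \ref{M23}.}

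The plan is to show that for every $x \in \A{23} \setminus M_{23}$ there exists $y \in M_{23}$ with $\langle x, y\rangle = \A{23}$; equivalently, no $x$ outside $M_{23}$ can be added to the coclique consisting of the non-identity elements of $M_{23}$. The key structural fact I would exploit is that $M_{23}$ is $4$-transitive on $23$ points, so $\langle x, y\rangle$ will automatically be primitive (indeed highly transitive) for essentially any choice, and the remaining task is to rule out $\langle x,y\rangle$ being one of the few maximal overgroups of $M_{23}$ in $\A{23}$, namely $M_{23}$ itself (the group can't land back in a conjugate of $M_{23}$ if we choose $y$ well) — and of course $\S{23}$ is not an issue since we work inside $\A{23}$. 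By the classification of maximal subgroups of $\A{23}$, a primitive proper subgroup of $\A{23}$ properly containing a copy of $M_{23}$ (other than being conjugate to $M_{23}$) does not exist: $M_{23}$ is itself maximal in $\A{23}$. So the only way $\langle x, y\rangle \neq \A{23}$ with $\langle x,y\rangle$ primitive is $\langle x, y\rangle$ conjugate to $M_{23}$, and since $y \in M_{23}$ this would force $x \in M_{23}^g$ for some $g$ normalising... actually more carefully, $\langle x,y\rangle \le M_{23}^g$ with $y \in M_{23} \cap M_{23}^g$, which I would rule out by a counting/Magma argument.

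The cleanest route, mirroring Lemma \ref{coded}, is a direct computation. First I would reduce the search space: by $4$-transitivity of $M_{23}$, every $x \in \A{23}\setminus M_{23}$ is $M_{23}$-conjugate to one whose action on a chosen small set of points is normalised; more simply, it suffices to take one representative $x$ from each $M_{23}$-orbit on $\A{23} \setminus M_{23}$, and for each such $x$ exhibit a single $y \in M_{23}$ with $\langle x, y\rangle = \A{23}$. Since $M_{23}$ has index $\binom{23!/2}{}/|M_{23}|$ which is astronomically large, enumerating orbits of $M_{23}$ on all of $\A{23}$ is infeasible directly; instead I would enumerate by cycle type. For each cycle type $\tau$ occurring in $\A{23}$, if no element of that type lies in $M_{23}$ then it lies outside; pick one such $x$, and search (over random or structured $y \in M_{23}$) for a generating pair. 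For the cycle types that do occur in $M_{23}$, one must be slightly more careful, but since $M_{23}$ has only $17$ conjugacy classes with known cycle types, the finitely many "ambiguous" cycle types can be handled by picking an $x$ of that type outside $M_{23}$ (such $x$ exist and are easy to produce) and again finding a suitable $y$.

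The main obstacle I anticipate is not mathematical depth but organising the case analysis so that it is genuinely exhaustive: one must be sure that every $M_{23}$-class outside $M_{23}$ has been represented, which is why I would lean on the cycle-type stratification together with the fact that $M_{23}$ is $4$-transitive (so that within a fixed cycle type, two elements agreeing on enough points are $M_{23}$-conjugate — though this last refinement is not even needed if for every relevant cycle type we simply find \emph{some} working $y$). A secondary subtlety: one must confirm that $\langle x, y\rangle$ is not a conjugate $M_{23}^g$; this is automatic once $\langle x,y\rangle$ is checked to be all of $\A{23}$, which Magma verifies directly via an order computation or a primitivity-plus-containing-a-$3$-cycle check (using Theorem \ref{jordanstyle}, since a $3$-cycle is a Jordan element and one can typically arrange $y$, or a power of $\langle x,y\rangle$, to contain one). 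So in practice the verification is: for each cycle-type representative $x \notin M_{23}$, find $y \in M_{23}$ with $\langle x, y\rangle$ transitive, primitive, and containing a Jordan element, whence $\langle x,y\rangle \ge \A{23}$ and, being inside $\A{23}$, equals it. I expect this to go through uneventfully in \textsc{Magma}.
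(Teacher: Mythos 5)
There is a genuine gap: your verification strategy is not exhaustive, and the missing ingredient is precisely the part of the argument that the paper has to work for. Reducing to one representative per $M_{23}$-orbit on $\A{23}\setminus M_{23}$ is valid, but you then replace $M_{23}$-orbits by cycle types, and these are wildly different partitions: two elements of $\A{23}$ with the same cycle type are $\S{23}$-conjugate, not $M_{23}$-conjugate, and since $[\A{23}:M_{23}]\approx 2.5\times 10^{15}$ a single cycle type splits into astronomically many $M_{23}$-classes. Finding one working $y$ for one representative of a cycle type says nothing about the other classes of that type, and $4$-transitivity does not rescue this — it lets you move any $4$-tuple of points to any other, but it does not make two same-type elements (or even two elements agreeing on four points) $M_{23}$-conjugate. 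So the ``uneventful \textsc{Magma} check'' you describe either checks too few elements to prove the lemma or would have to run over an infeasibly large set; this is exactly why the naive analogue of Lemma \ref{coded} breaks down at degree $23$.

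The paper's proof avoids enumeration altogether. Since every Sylow $23$-subgroup $Z$ of $M:=M_{23}$ gives a transitive (hence primitive) group $\langle x,Z\rangle$, and a \textsc{Magma}/classification check shows the only transitive maximal subgroups of $\A{23}$ are the two conjugacy classes $\mathcal{A},\mathcal{B}$ of $M_{23}$, it suffices to choose $Z$ so that no member of $\mathcal{B}$ contains both $x$ and $Z$ (membership in a second group of $\mathcal{A}$ is impossible, as $M$ is the unique member of $\mathcal{A}$ containing $Z$). For $x$ of order at least $4$, the permutation character of $\A{23}$ on the cosets of $M_{23}$ bounds the number of members of $\mathcal{B}$ containing $x$ by $4608$, while $M$ has $40320$ Sylow $23$-subgroups, each lying in a unique member of $\mathcal{B}$; so a good $Z$ exists and $\langle x,Z\rangle=\A{23}$. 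For $x$ of order $2$ or $3$ the count fails, but then either $x$ avoids the unique $B\in\mathcal{B}$ containing a fixed $Z$ (done), or $x\in B\setminus M$, and that is a genuinely finite set over which a direct computer check finds a generating partner in $M$. If you want to salvage your approach, you need some replacement for this counting step — e.g.\ an argument controlling, for each $x$, which conjugates of $M_{23}$ can contain $\langle x,y\rangle$ — rather than a cycle-type-stratified search.
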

\begin{proof}
Let $G = \A{23}$. A quick calculation in \textsc{Magma} shows that the only transitive maximal subgroups of $G$ are two conjugacy classes of groups isomorphic to $M_{23}$, which we denote $\mathcal{A}$ and $\mathcal{B}$. Since $\mathcal{A}$ and $\mathcal{B}$ are conjugate in $\S{23}$ it suffices to consider $M \in \mathcal{A}$. Recall that the Sylow 23-subgroups of $\A{23}$ are cyclic and transitive. 

First suppose that the order of $x$ is at least 4. We claim that there exists $Z \in \Syl_{23}(M)$ such that $\langle x, Z \rangle =G$. By calculating the permutation character of $A_{23}$ on the cosets of $M_{23}$ in $\textsc{Magma}$, we see that $x$ lies in at most 4608 groups $B \in \mathcal{B}$, and each element of order 23 lies in exactly one $A \in \mathcal{A}$ and exactly one $B \in \mathcal{B}$. 
Let $Z \in \Syl_{23}(M)$, since $M \in \mathcal{A}$ it follows from \cite{Atlas}, that $N_M(Z) = N_G(Z)$ and $N_M(Z) \leq_{\textrm{max}} M$. Hence $|\Syl_{23}(M)| = [M : N_M(Z)] =40320$, and so there are $40320 -4608 = 35712$ possibilities for $Z \in \Syl_{23}(M)$ such that $H:=\langle x, Z \rangle$ is contained in no $B \in \mathcal{B}$. Since $x \notin M$, and $M$ is the unique subgroup of $\mathcal{A}$ containing $Z$, it follows that $H = G$.

Now suppose that $x$ has order 2 or 3 and let $Z \in \Syl_{23}(M)$. By the previous case, $M$ is the unique group of $\mathcal{A}$ containing $Z$ and there exists a unique $B \in \mathcal{B}$ with $Z \leq B$. Therefore if $x \notin B$ then $\langle x, Z \rangle =G$. Hence suppose that $x \in B$ and proceed using \textsc{Magma}. Let $M$ be the representative of one conjugacy class of $M_{23}$ in $G$, and let $B_0$ be the representative of the other. Then $B$ can by found by conjugating $B_0$ by the element of $S_{23}$ which conjugates a subgroup of $\Syl_{23}(B_0)$ to $Z$. It is then possible to check that for each element $x \in B \backslash M$ of order 2 or 3, there exists $y \in M$ such that $\langle x, y \rangle =G$.
\end{proof}
The following theorem enables us to classify the maximal subgroups of $\S{p}$ and $\A{p}$.
\begin{theorem}[{\cite[p.99]{DM}}\label{DM}] 
A transitive group of prime degree $p$ is one of the following:
\begin{enumerate}[label=\rm{(\roman*)}]
\item the symmetric group $\S{p}$ or the alternating group $\A{p}$;
\item a subgroup of $\AGL_1(p)$;
\item a permutation representation of $\PSL_2(11)$ of degree 11;
\item one of the Mathieu groups $M_{11}$ or $M_{23}$ of degree 11 or 23, respectively;
\item a group $G$ with $\PSL_d(q) \leq G \leq \PGamL_d(q)$ of degree $p = \frac{q^d-1}{q-1}$.
\end{enumerate}
\end{theorem}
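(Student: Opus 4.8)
The plan is to obtain this list by combining two classical ingredients: Burnside's theorem on transitive groups of prime degree and the (CFSG-dependent) classification of finite $2$-transitive groups. First I would apply Burnside's theorem: a transitive permutation group $G$ of prime degree $p$ is either $2$-transitive or has a normal, regular Sylow $p$-subgroup $P \cong C_p$; in the latter case $G = P \rtimes G_\alpha$ is a Frobenius group whose point stabiliser embeds into $\mathrm{Aut}(P) \cong C_{p-1}$, so identifying $\Omega$ with the field of $p$ elements gives $G \leq \AGL_1(p)$, which is alternative (ii). It then remains to treat the $2$-transitive case. There the O'Nan--Scott theorem forces the socle $N$ of $G$ to be either elementary abelian or non-abelian simple. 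If $N \cong C_r^d$ is elementary abelian and regular then $p = |N| = r^d$ forces $d = 1$ and $r = p$, so again $G \leq \AGL_1(p)$; otherwise $G$ is almost simple with socle $T$ acting $2$-transitively, and in particular transitively, on the $p$ points.

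The second step is to run through the classification of $2$-transitive almost simple groups and retain exactly those whose natural degree can be prime. The possible socles and degrees are: $\A{n}$ (degree $n$); $\PSL_d(q)$ (degree $\frac{q^d-1}{q-1}$); $\mathrm{Sp}_{2d}(2)$ for $d \geq 2$ (degrees $2^{d-1}(2^d \pm 1)$); $\mathrm{PSU}_3(q)$ and the Ree groups ${}^{2}G_2(q)$ (degree $q^3+1$); $\mathrm{Sz}(q)$ (degree $q^2+1$); and the small and sporadic cases $\PSL_2(11)$, $\A{7}$, $M_{11}$, $M_{12}$, $M_{22}$, $M_{23}$, $M_{24}$, $\mathrm{HS}$, $\mathrm{Co}_3$ on $11$, $15$, $11$ or $12$, $12$, $22$, $23$, $24$, $176$, $276$ points respectively. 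Elementary factorisations dispose of nearly all of these: $q^3+1 = (q+1)(q^2-q+1)$ is composite for $q \geq 2$, the symplectic degrees are even and larger than $2$, the Suzuki degree $q^2+1$ with $q = 2^{2e+1}$ is divisible by $5$ and exceeds $5$, and the remaining sporadic degrees $12, 15, 22, 24, 176, 276$ are composite. What survives is precisely $\A{p}$ and $\S{p}$ with $p$ prime (case (i)); $\PSL_2(11)$ of degree $11$ (case (iii)); $M_{11}$ and $M_{23}$ of degrees $11$ and $23$ (case (iv)); and groups $G$ with $\PSL_d(q) \leq G \leq \PGamL_d(q)$ acting on the $\frac{q^d-1}{q-1}$ points of a projective space whenever that number is prime (case (v)).

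The main obstacle is that there is no short, self-contained argument: the input to the second step, the classification of $2$-transitive groups, rests on the Classification of Finite Simple Groups, and reproducing it is out of the question here. Thus ``proving'' the statement really amounts to assembling these standard ingredients and carrying out the entirely routine number-theoretic sifting above; for the purposes of this paper it is cleanest simply to cite \cite[p.~99]{DM}, where exactly this list is recorded.
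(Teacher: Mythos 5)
Your argument --- Burnside's theorem to reduce to the $2$-transitive case, then the CFSG-based classification of $2$-transitive groups followed by routine sifting of which degrees can be prime --- is correct and is precisely the standard derivation behind this statement; the paper itself gives no proof, relying on the citation to \cite[p.~99]{DM}, so your proposal matches the intended justification.
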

In the following lemma we collect some standard facts about $\AGL_1(p)$. 
\begin{lemma} \label{AGL}
Let $G = \S{p}$ and $M = \AGL_1(p) \leq G$. 
\begin{enumerate}[label=\rm{(\roman*)}]
\item The group $M$ is sharply 2-transitive. \label{2trans}
\item $M$ has a unique Sylow $p$-subgroup, $P = \langle z \rangle$, and $M=N_G(P) \cong C_p : C_{p-1}$. \label{norm}
\item The elements of $M$ are $p$-cycles or powers of $(p-1)$-cycles. \label{fixedpoints}
\item If $y_1, y_2 \in M$ are $(p-1)$-cycles such that $\langle y_1 \rangle \neq \langle y_2 \rangle$, then $M = \langle y_1, y_2 \rangle$. \label{2gen}

\end{enumerate}
\end{lemma}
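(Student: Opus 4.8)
The plan is to work with the concrete realisation of $M=\AGL_1(p)$ as the group of affine maps $\sigma_{a,b}\colon x\mapsto ax+b$ of $\mathbb{F}_p$, where $a\in\mathbb{F}_p^{\times}$ and $b\in\mathbb{F}_p$, so that $|M|=p(p-1)$; each of (i)--(iv) is then a short computation with such maps. For (i), given ordered pairs of distinct points $(u_1,u_2)$ and $(v_1,v_2)$, I would solve $au_i+b=v_i$: subtracting gives $a=(v_1-v_2)(u_1-u_2)^{-1}$, a well-defined nonzero scalar since each pair consists of distinct points, and then $b=v_1-au_1$ is forced. So there is a unique $\sigma_{a,b}$ carrying $(u_1,u_2)$ to $(v_1,v_2)$, which is sharp $2$-transitivity; counting ordered pairs of distinct points also re-proves $|M|=p(p-1)$.

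For (ii), the translations $\sigma_{1,b}$ form a subgroup $P$ of order $p$, and $P$ is normal because a conjugate of a translation is a translation, so $P$ is the unique Sylow $p$-subgroup of $M$; any $z=\sigma_{1,b}$ with $b\ne 0$ acts on the $p$-point set $\mathbb{F}_p$ as a single $p$-cycle. Since $M/P\cong\mathbb{F}_p^{\times}\cong C_{p-1}$ and $P\cong C_p$, we get $M\cong C_p\rtimes C_{p-1}$. For $M=N_G(P)$ with $G=\S{p}$: the inclusion $M\le N_G(P)$ is clear as $P\trianglelefteq M$; conversely, $\S{p}$ has exactly $(p-1)!/(p-1)=(p-2)!$ subgroups of order $p$ (each is generated by a $p$-cycle and contains $p-1$ of the $(p-1)!$ $p$-cycles), so Sylow's theorem gives $|N_G(P)|=p!/(p-2)!=p(p-1)=|M|$, forcing equality.

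For (iii), by (i) every non-identity element of $M$ fixes at most one point. A non-identity element fixing no point must be a nontrivial translation, since any $\sigma_{a,b}$ with $a\ne 1$ fixes $b(1-a)^{-1}$; such a translation is a $p$-cycle. An element fixing exactly one point $\alpha$ lies in $\stab{M}{\alpha}$, which by $2$-transitivity acts transitively, hence regularly, on the remaining $p-1$ points, and which is cyclic of order $p-1$ (it is $M$-conjugate to $\stab{M}{0}=\{\sigma_{a,0}\}\cong\mathbb{F}_p^{\times}$); a generator of a cyclic group acting regularly on $p-1$ points is a $(p-1)$-cycle, so the element is a power of a $(p-1)$-cycle, as is the identity. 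For (iv), a $(p-1)$-cycle $y_i\in M$ has order $p-1$ and fixes a unique point $\alpha_i$; if $\alpha_1=\alpha_2$ then $y_1,y_2$ both generate the cyclic group $\stab{M}{\alpha_1}$ of order $p-1$, contradicting $\langle y_1\rangle\ne\langle y_2\rangle$, so $\alpha_1\ne\alpha_2$. Then $\langle y_1,y_2\rangle$ is transitive, since $y_1$ is transitive on $\mathbb{F}_p\setminus\{\alpha_1\}$ and $y_2$ does not fix $\alpha_1$; hence $p\mid|\langle y_1,y_2\rangle|$, and as $y_1$ has order $p-1$ also $p-1\mid|\langle y_1,y_2\rangle|$, so with $\gcd(p,p-1)=1$ we get $p(p-1)\mid|\langle y_1,y_2\rangle|\le|M|=p(p-1)$, whence $\langle y_1,y_2\rangle=M$.

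Since these are all standard facts about $\AGL_1(p)$, no step is a genuine obstacle; the only place demanding a little care is the identification $M=N_G(P)$ in (ii), where one should supply the Sylow-counting argument above rather than merely quote $N_{\S{p}}(C_p)=\AGL_1(p)$.
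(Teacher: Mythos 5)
Your proof is correct. The paper offers no proof of this lemma at all -- it simply records parts (i)--(iv) as standard facts about $\AGL_1(p)$ -- so there is no argument of the authors' to compare yours against; your explicit computation with the affine maps $x \mapsto ax+b$, the Sylow count giving $|N_{\S{p}}(P)| = p!/(p-2)! = p(p-1)$ and hence $M = N_{\S{p}}(P)$, the fixed-point analysis for (iii), and the transitivity-plus-coprime-order argument for (iv) together form a complete and correct verification.
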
 
We now have the tools required to prove Theorem \ref{AGLstuff}.\\

\noindent
\textit{Proof of Theorem \ref{AGLstuff}.}
Since $p$ is prime, for all $k$ with $p > k > \frac{p}{2}$, it follows that $\gcd(k, p-k) =1$. If $G = \S{p}$, then by Theorem \ref{newmain} each intransitive maximal subgroup is a maximal coclique. If $G = \A{p}$, then for $p \neq 5$ each intransitive maximal subgroup is a maximal coclique, and if $p=5$, then $(\S{4} \times \S{1} ) \cap \A{5}$ is a maximal coclique but $(\S{2} \times \S{3}) \cap \A{5}$ is not.
 
If $p=11$ or $23$ and $G = \A{p}$, then the transitive maximal subgroups are the respective Mathieu groups. If $p=11$, then the result follows from a straightforward $\textsc{Magma}$ calculation, similar to the one described in the proof of Lemma \ref{coded}. The result for $p=23$ follows from Lemma \ref{M23}. Hence assume from now on that if $G = \A{p}$, then $p \neq 11, 23$. 

Let $G = \S{p}$, let $M= \A{p}$ and let $x \in G \backslash M$. Let $y \in M$ be a $p$-cycle such that $y$ is not normalized by $x$. Then $ \langle x, y \rangle$ is a transitive subgroup and lies in no conjugate of $\AGL_1(p) \cap G$ by Lemma \hyperref[]{\ref*{AGL}.\ref*{norm}}. Hence $\A{p} \leq \langle x, y \rangle =G$, and so $M$ is a maximal coclique.\\

By Theorem \ref{DM} the only remaining case is $M = \AGL_1(p) \cap G$.
First consider together the cases $G = \A{p}$, or $G = \S{p}$ and $x \notin M$ is an odd permutation. Let $y \in M$ be a $p$-cycle, so $H = \langle x, y \rangle$ is transitive. By Lemma \hyperref[]{\ref*{AGL}.\ref*{norm}}, $y$ is contained in no other conjugate of $M = N_G(\langle y \rangle)$. Since $x \notin M$, it follows that $H \neq M$, and so $H = G$.

Assume instead that $G = \S{p}$ and $x \notin M$ is an even permutation. First let $x$ be of order $p$. Let $y_1, y_2 \in M$ be $(p-1)$-cycles with $\langle y_1 \rangle \neq \langle y_2 \rangle$. Then $H_1 = \langle x, y_1 \rangle$ and $H_2 = \langle x, y_2 \rangle$ are distinct transitive subgroups of $G$. Note that $y_1, y_2 \in G \backslash \A{p}$, and so $H_1$ and $H_2$ either conjugate to $M$, or equal to $G$. In the latter case the result holds, so assume that both $H_1$ and $H_2$ are conjugate to $M$. Since $x \in H_1 \cap H_2$ and $N_G(\langle x \rangle)$ is the unique conjugate of $M$ containing $x$, it immediately follows that $H_1 = N_G(\langle x \rangle) = H_2$, a contradiction.

Assume next that $x$ lies in no conjugate of $M$. Let $t \in \supp{x}$ and let $y$ be a $(p-1)$-cycle of $M$ fixing $t$. Then $ \langle x, y \rangle$ is transitive and contained in no conjugate of $M$, and so $\langle x, y \rangle = G$.

Finally assume that $x$ is an even permutation, not a $p$-cycle and lies in some conjugate of $M$. By Lemma \hyperref[]{\ref*{AGL}.\ref*{fixedpoints}}, $x$ is a proper power of a $(p-1)$-cycle. We claim there exists a $(p-1)$-cycle $y$ in $M$, and $z \in \langle y \rangle,$ such that $H = \langle x, y \rangle$ is transitive and $1< \fix{z^{-1}x} <p$. Since, by Lemma \hyperref[]{\ref*{AGL}.\ref*{2trans}}, each non-identity element of $M$ has at most one fixed point it will follow that $H$ lies in no conjugate of $M$, and so $H = G$. 

It remains to prove the claim. Since $x$ is a proper power of a $(p-1)$-cycle, $x$ has one fixed point which we shall call $f$. Let $M_f$ denote the point stabilizer of $f$ in $M$, and $P$ denote the cyclic $p$-subgroup of $M$.

Since $p \geq 5,$ there exist $a, b \in \supp{x}$ with $a \neq b$. By sharp 2-transitivity there exists an element $y_1$ in $M$ such that $a^{y_1} = a^x$ and $b^{y_1} = b^x$. If $y_1 \notin M_f \cup P$, then $y_1$ lies in a cyclic subgroup $\langle y \rangle$ of order $(p-1)$ and $H = \langle x, y \rangle$ is transitive. In addition $a,b \in \fix{y_1^{-1}x}$, as claimed.

Suppose instead that $y_1 \in  M_f \cup P$. Since $y_1 \neq x$ and $p \geq 5$, there exists $c \in \supp{x}$ with $c \neq a,b$ such that $c^{y_1} \neq c^x$. By sharp 2-transitivity, there exists $y_2 \in M$ such that $a^{y_2} = a^x$ and $c^{y_2} =c^x$. If $y_2 \notin M_f \cup P$, then the result follows as for $y_1$ with $a,c \in \fix{y_2^{-1}x}$.

Suppose that $y_1, y_2 \in  M_f \cup P$. It follows from $c^{y_1} \neq c^{y_2}$ that $y_1 \neq y_2$. Therefore because $a^{y_1} = a^{y_2}$, by sharp 2-transitivity, it follows that $b^{y_2} \neq b^{y_1} = b^x$. There is a unique element of $ M_f $, and a unique element of $P$, sending $a$ to $a^x$. Let $Y_1$ and $Y_2$ be the maximal cyclic subgroups containing $y_1$ and $y_2$. Then $Y_1 \cup Y_2 = M_f \cup P$.

Since $M$ is sharply 2-transitive, there exists $y_3 \in M$ such that $b^{y_3} =b^x$ and $c^{y_3} =c^x$. Since $y_1$ is the unique element of $Y_1$ sending $b$ to $b^x$, and $c^{y_1} \neq c^{y_3}$, it follows that $y_3 \notin Y_1$. Since $y_2$ is the unique element of $Y_2$ sending $c$ to $c^x$ and $b^{y_2} \neq b^{y_3}$, it follows that $y_3 \notin Y_2$. Hence $y_3 \notin Y_1 \cup Y_2 = M_f \cup P$. Thus let $y \in M$ be a $(p-1)$-cycle such that $y^t = y_3$ for some $t \in \mathbb{N}$.  Then $y$ satisfies the claim with $b,c \in \fix{y^{-t}x} = \fix{y_3^{-1}x}$. Therefore the claim and the theorem follow. \hfill \qed

Veronica Kelsey \& Colva M. Roney-Dougal: Mathematical Institute, Univ. St Andrews, KY16 9SS, UK\\
vk49@st-andrews.ac.uk\\
Colva.Roney-Dougal@st-andrews.ac.uk
\end{document}